\theoremstyle{plain}
\newtheorem{question}{Question}
\newtheorem{remark}{Remark}[section]
\newtheorem{example}{Example}[section]
\newtheorem{conjecture}{Conjecture}[section]
\newtheorem{definition}{Definition}[section]
\numberwithin{equation}{section}
\newtheorem{theorem}{Theorem}[section]
\newtheorem{corollary}[theorem]{Corollary}
\newtheorem{lemma}[theorem]{Lemma}
\newtheorem{proposition}[theorem]{Proposition}
\newtheorem*{ac}{Acknowledgement}
\newtheorem{main theorem}{Main Theorem}
\begin{document}

\title{Quantum convolution inequalities on Frobenius von Neumann algebras}

\author{Linzhe Huang}
\address{L. Huang, Yau Mathematical Sciences Center, Tsinghua University, Beijing, 100084, China}
\email{huanglinzhe@mail.tsinghua.edu.cn}

\author{Zhengwei Liu}
\address{Z. Liu, Yau Mathematical Sciences Center and Department of Mathematics, Tsinghua University, Beijing, 100084, China}
\address{Beijing Institute of Mathematical Sciences and Applications, Huairou District, Beijing, 101408, China}
\email{liuzhengwei@mail.tsinghua.edu.cn}

\author{Jinsong Wu}
\address{J. Wu, Beijing Institute of Mathematical Sciences and Applications, Beijing, 101408, China}
\email{wjs@bimsa.cn}
\date{}

\maketitle

\begin{abstract}
In this paper, we introduce Frobenius von Neumann algebras and study quantum convolution inequalities. In this framework, we unify quantum Young's inequality on quantum symmetries such as subfactors, and fusion bi-algebras studied in quantum Fourier analysis. 
Moreover, we prove quantum  entropic convolution inequalities and characterize the extremizers in the subfactor case. We also prove quantum smooth entropic convolution inequalities.
We obtain the positivity of comultiplications of subfactor planar algebras, which is stronger than the quantum Schur product theorem. All these inequalities provide analytic obstructions of unitary categorification of fusion rings stronger than Schur product criterion. 
\end{abstract}
{\bf Key words.} Frobenius von Neumann algebra, quantum Young's inequality, quantum entropic convolution inequality, unitary categorification

{\bf MSC.} 46L37, 18N25, 94A15

\bigskip
\section{Introduction}
In \cite{Bec75}, Beckner remarkably proved the sharp Young's inequality \cite{You12}.
Recently, quantum Young's inequality for convolution has been established on quantum symmetries, such as
subfactors \cite{JLW16,BVG21}, fushion bi-algebras \cite{LPW}, Kac algebras \cite{LW18}, locally compact quantum groups 
\cite{JLW18},  etc., see further discussions 
 in the framework of quantum Fourier analysis \cite{JJLRW}.
Moreover, the extremal pairs of quantum Young’s
inequality  are characterized on subfactor planar algebras \cite{JLW19} and kac algebras \cite{LW18}.

In 2021, A. Wigderson and Y. Wigderson \cite{WW21} unified various classical (smooth) uncertainty principles related to discrete Fourier transforms. In \cite{HLW21}, the authors unified various quantum (smooth) uncertainty principles on quantum symmetries related to quantum Fourier transforms. 

One  goal of this paper is to unify inequalities  on quantum symmetries related to the convolution. 
The study of the comultiplication turns out to be more   fundamental than the study of convolution.
\begin{theorem}[See Theorem \ref{thm:positivity of comultiplication on subfactor}]\label{thm:main thm 1}
Suppose $\mathscr{P}_{\bullet}$ is a subfactor planar algebra. Then the comultiplication $\Delta$: $\mathscr{P}_{2,\pm}\rightarrow\mathscr{P}_{2,\pm}\otimes\mathscr{P}_{2,\pm}$
is positive.
\end{theorem}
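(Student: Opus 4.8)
\medskip
\noindent\textbf{Proof strategy.}
Fix the positive shading and write $A=\mathscr{P}_{2,+}$; the case $\mathscr{P}_{2,-}$ follows by applying the same argument to the dual planar algebra. Recall $A$ is a finite-dimensional C$^{*}$-algebra carrying its faithful Markov trace $\tau$. The first step is to turn the assertion into a scalar inequality. Since $\tau\otimes\tau$ is a faithful tracial state on the finite-dimensional C$^{*}$-algebra $A\otimes A$, its GNS representation is faithful and the GNS space is $A\otimes A$ with inner product $\langle u,v\rangle=(\tau\otimes\tau)(v^{*}u)$; hence an element $T\in A\otimes A$ is positive if and only if $\langle T\xi,\xi\rangle\ge 0$ for every $\xi\in A\otimes A$. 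So it suffices to prove
\[
\big\langle \Delta(x)\,\xi,\xi\big\rangle\ \ge\ 0
\qquad\text{for all } x\ge 0 \text{ in } A \text{ and all } \xi\in A\otimes A .
\]
It is instructive to see here why the statement is stronger than the quantum Schur product theorem: for a \emph{simple} tensor $\xi=a\otimes b$, unfolding the pairing between $\Delta$ and the convolution rewrites the left side as $\tau\big(x\,(p*q)\big)$ for positive $p,q\in A$ built from $a,b$, so positivity on simple tensors is exactly $\tau(x\,(p*q))\ge 0$ for $x,p,q\ge 0$ --- the Schur product theorem; the content lies in the \emph{entangled} $\xi=\sum_i a_i\otimes b_i$, which in general is not a sum of positive simple tensors.

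\medskip
For the second step, write $x=y^{*}y$ with $y\in A$ (possible since $A$ is a finite-dimensional C$^{*}$-algebra) and substitute $x=y^{*}y$ and $\xi=\sum_i a_i\otimes b_i$ into the display. Unfolding the definitions of $\Delta$, of the product of $A\otimes A$, and of the two copies of $\tau$, the quantity $\langle\Delta(y^{*}y)\xi,\xi\rangle$ becomes a sum over $i,j$ of the value of a single closed labelled tangle in $\mathscr{P}_{\bullet}$, whose input disks carry $y,y^{*}$ together with $a_i,a_i^{*},a_j,a_j^{*},b_i,b_i^{*},b_j,b_j^{*}$, glued according to the comultiplication tangle. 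The crucial point is that this closed tangle admits a reflection symmetry: the comultiplication tangle is, up to isotopy and rotation, invariant under the reflection interchanging its two outputs, and the remaining pieces --- the product of $A\otimes A$ together with the two copies of $\tau$ --- supply precisely the mirror image of the half carrying $y$ and the $a_i,b_i$. Consequently the whole diagram equals $\tau(Z^{*}Z)=\|Z\|_{2}^{2}$ for a single element $Z\in A\otimes A$ obtained by folding the tangle along the reflection axis --- concretely, $Z$ is a sum of planar compositions of $y$ with the $a_i$ and $b_i$ --- and reflection positivity then gives $\langle\Delta(y^{*}y)\xi,\xi\rangle=\|Z\|_{2}^{2}\ge 0$, as required.

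\medskip
The step I expect to be the main obstacle is carrying out this fold in the presence of entanglement: one must check that the reflection axis can be chosen so that it \emph{simultaneously} sends $y\mapsto y^{*}$ and, for each $i$, the labels of $a_i\otimes b_i$ to those of $(a_i\otimes b_i)^{*}$, consistently with the way the comultiplication tangle connects the $i$-th and $j$-th strands; otherwise ``transposed-but-not-adjointed'' labels appear and positivity fails. This is exactly where the defining axioms of a subfactor planar algebra are used essentially --- the $*$-structure implemented by vertical reflection, sphericality, and the positivity and faithfulness of the Markov trace --- and it is the point where the argument genuinely goes beyond the Schur product case, whose convolution tangle has less symmetry. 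As a fallback, I would instead try to prove the stronger statement that $\Delta$ is completely positive, by producing a Stinespring-type dilation $\Delta(x)=V^{*}\pi(x)V$ with $\pi$ a $*$-representation of $A$ and $V$ an intertwiner arising from a ``splitting'' tangle; showing that such $V$ and $\pi$ exist is again a diagrammatic reflection-positivity computation that repackages the same core lemma.
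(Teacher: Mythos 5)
Your strategy is essentially the paper's, and the ``fallback'' you mention at the end is in fact exactly how the paper closes the gap you flag as the main obstacle. The paper observes that the comultiplication tangle factors, up to a positive scalar, as $\Delta = \mathbf{E}\circ\phi$, where $\phi:\mathscr{P}_{2,\pm}\to\mathscr{P}_{4,\pm}$ is the $*$-homomorphism given by spreading the four strands of $z$ to the outer boundary points and capping off the inner ones, and $\mathbf{E}:\mathscr{P}_{4,\pm}\to\mathscr{P}_{2,\pm}\otimes\mathscr{P}_{2,\pm}$ is the trace-preserving conditional expectation; positivity (indeed complete positivity) is then immediate. This is precisely your Stinespring dilation $\Delta(x)=V^*\pi(x)V$, and it performs your ``fold'' automatically: with $x=y^*y$ and $\xi\in A\otimes A$ one gets
\begin{align*}
\langle \Delta(y^*y)\xi,\xi\rangle \;=\; (\tau\otimes\tau)\bigl(\xi^*\,\mathbf{E}(\phi(y)^*\phi(y))\,\xi\bigr) \;=\; \mathrm{Tr}\bigl(\xi^*\phi(y)^*\phi(y)\xi\bigr) \;=\; \|\phi(y)\xi\|_2^2 \;\ge\; 0,
\end{align*}
so your element $Z$ is just $\phi(y)\xi$ and no label-chasing through a reflection axis is needed. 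Your primary route (verifying the reflection symmetry of the closed tangle by hand for entangled $\xi$) is workable but is exactly the diagrammatic unpacking of this one-line structural identity; I would recommend leading with the factorization rather than leaving the fold as an unverified step.
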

Theorem \ref{thm:main thm 1} implies  the positivity of convolution (called quantum Schur product theorem, see Theorem 4.1 in \cite{Liu}), which led to an analytic criterion of unitary categorification of fusion rings (called Schur product criterion, see Proposition 8.3 in \cite{LPW}).
Theorem \ref{thm:main thm 1} leads to a stronger criterion of  unitary categorification.
\begin{theorem}[Positivity of Comultiplication Criterion]
For a fusion ring $\mathfrak{A}$, let $N_{k,j}^i$ be the fusion coefficients and $M_k=(N_{k,j}^i)_{i,j}$ be the fusion matrices, $1\leq i,j,k\leq n$. 
If $\mathfrak{A}$ admits a unitary categorification,
then
 for any $v\in\mathbb{C}^n$,
\begin{align}\label{eq:positive comultiplication criterion introduction}
\sum_{k=1}^n\frac{v^* M_k v}{\|M_k\|}
M_k\otimes M_k\geq 0.
\end{align}
\end{theorem}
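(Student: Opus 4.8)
The plan is to transport the statement into a subfactor planar algebra, apply Theorem~\ref{thm:main thm 1} there, and then evaluate the resulting positivity through a representation. First I would use the categorification: a unitary categorification of $\mathfrak{A}$ is a unitary fusion category $\mathcal{C}$ with fusion ring $\mathfrak{A}$, and its fusion data is realized inside a subfactor planar algebra $\mathscr{P}_{\bullet}$, with the complexified fusion ring appearing in $\mathscr{P}_{2,+}$ (for the appropriate shading) as a fusion subalgebra $A$. Concretely, $A$ has a basis $\{b_k\}_{k=1}^{n}$ of positive elements labelled by the simple objects, with convolution $b_i\ast b_j=\sum_l N_{ij}^l\,b_l$, involution $b_k^{\ast}=b_{k^{\ast}}$, and a faithful trace $\tau$ making $\{b_k\}$ orthonormal (so $\tau(b_i b_j)=\delta_{i,j^{\ast}}$). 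In this normalization the comultiplication restricts on $A$ to $\Delta(b_k)=\|M_k\|^{-1}\,b_k\otimes b_k$, reflecting that the counit on $A$ is the dimension function; here one uses that $\|M_k\|$ is the Frobenius--Perron dimension $d_k$ of $X_k$ for any fusion ring, because $M_k^{\ast}M_k=M_{k^{\ast}}M_k$ is again a fusion matrix — that of $X_{k^{\ast}}X_k$ — with Perron--Frobenius eigenvalue $d_{k^{\ast}}d_k=d_k^2$.

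Next I would feed in the element determined by $v$. Given $v\in\mathbb{C}^n$, set $\varphi(k):=v^{\ast}M_k v$ and $y:=\sum_k\varphi(k)\,b_k\in A$; the crucial claim is $y\ge 0$. Indeed $k\mapsto\overline{\varphi(k)}$ is of positive type on the fusion ring: using $M_{i^{\ast}}M_j=\sum_l N_{i^{\ast}j}^l M_l$ one computes
\begin{equation*}
\Big(\sum_l N_{i^{\ast}j}^l\,\overline{\varphi(l)}\Big)_{i,j}=\big(\overline{v^{\ast}M_{i^{\ast}}M_j v}\big)_{i,j}=\big(\langle M_i v,\ M_j v\rangle\big)_{i,j},
\end{equation*}
which is the Gram matrix of $M_1 v,\dots,M_n v$ and hence positive semidefinite. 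In the Frobenius algebra $A$ this forces $y\ge 0$: the functional $a\mapsto\tau(a y)$ sends $b_k$ to $\tau(b_k y)=\varphi(k^{\ast})=\overline{\varphi(k)}$, so the display gives $\tau(a^{\ast}a\,y)\ge 0$ for all $a\in A$, and since $\tau$ is a faithful trace this means $y\ge 0$. (This is the exact analogue of the elementary fact that a positive-definite function on a finite group yields a positive element of its group algebra.)

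The conclusion is then formal. By Theorem~\ref{thm:main thm 1} the comultiplication of $\mathscr{P}_{\bullet}$ is positive, so
\begin{equation*}
\Delta(y)=\sum_{k}\frac{\varphi(k)}{\|M_k\|}\,b_k\otimes b_k\ \ge\ 0\quad\text{in }A\otimes A .
\end{equation*}
Let $L\colon A\to M_n(\mathbb{C})$, $b_k\mapsto M_k$, be the left regular representation of $(A,\ast)$; it is a unital $\ast$-homomorphism, because $M_i M_j=\sum_l N_{ij}^l M_l$, $M_{k^{\ast}}=M_k^{\ast}$, and $\{b_k\}$ is orthonormal. Then $L\otimes L$ is a $\ast$-homomorphism, hence positive, and applying it to $\Delta(y)\ge 0$ yields exactly
\begin{equation*}
0\ \le\ (L\otimes L)\big(\Delta(y)\big)=\sum_{k=1}^{n}\frac{v^{\ast}M_k v}{\|M_k\|}\,M_k\otimes M_k .
\end{equation*}

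The hard part is the first step: realizing the whole fusion ring as a fusion subalgebra of the $2$-box space of a subfactor planar algebra, and — most delicately — pinning down the restriction of the comultiplication to it with the exact normalization, so that it is precisely $\|M_k\|$ that appears. That amounts to matching the diagrammatic definition of $\Delta$, the Fourier transform on $\mathscr{P}_{2,\pm}$, and the various trace normalizations; once this identification is in hand, the remaining two steps go through without further subtlety.
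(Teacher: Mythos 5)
Your overall route is the paper's: invoke Theorem \ref{thm:positivity of comultiplication on subfactor} for the planar algebra attached to the categorification, evaluate the comultiplication on the positive element $\sum_k (v^*M_kv)\,x_k$, and push the result through the left regular representation --- this is exactly the combination of Propositions \ref{prop: fusion ring categorification imply positivity} and \ref{prop: positive comultiplication matrix}. Your last two steps are sound, although the Gram-matrix argument for $y\ge 0$ is a detour: with $w=\sum_i \overline{v_i}\,x_i$ one has $ww^*=\sum_k (v^*M_kv)\,x_k$ directly from Frobenius reciprocity, which is how the paper obtains positivity in \eqref{eq:positive comultiplication}. Your identification $\|M_k\|=d(x_k)$ via the Perron--Frobenius eigenvalue of $M_{k^*}M_k$ is correct and is more explicit than what the paper records.

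The genuine problem is in your first step, and it is not merely that you leave it unverified: the realization you describe cannot exist for a nontrivial fusion ring. You ask for a basis $\{b_k\}$ of \emph{positive} elements of $\mathscr{P}_{2,+}$ with $b_i\ast b_j=\sum_l N_{ij}^l\,b_l$, orthonormal for the trace, \emph{and} on which the comultiplication adjoint to $\ast$ acts group-likely, $\Delta(b_k)=\|M_k\|^{-1}b_k\otimes b_k$. But adjointness forces $\langle \Delta(b_l),\,b_i\otimes b_j\rangle=\langle b_l,\,b_i\ast b_j\rangle=N_{ij}^l$, so $\Delta(b_l)=\sum_{i,j}N_{ij}^l\,b_i\otimes b_j$; this is a multiple of $b_l\otimes b_l$ only if $N_{ij}^l=0$ unless $i=j=l$, which fails already because $N_{1,l}^l=1$. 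The two features you want live on opposite sides of the Fourier transform: the basis of positive elements (multiples of minimal projections) whose convolution realizes the fusion rules sits in the commutative side $\mathcal{A}=\mathscr{P}_{2,+}$, where the adjoint comultiplication is the paper's $\Delta_2$ of Proposition \ref{prop:positivity comultiplication fusion bialgebra}; the group-like comultiplication $\Delta_1(x_k)=d(x_k)^{-1}x_k\otimes x_k$ lives on $\mathcal{B}=\mathscr{P}_{2,-}$, where the $x_k$ realize the fusion rules under the \emph{ordinary} product and are not positive operators (for a group they are the translation unitaries $\lambda_g$). The repair is to work on the $\mathcal{B}$-side throughout: there $y=\sum_k(v^*M_kv)x_k=ww^*$ is operator-positive, Theorem \ref{thm:positivity of comultiplication on subfactor} applies, the identification of the restricted comultiplication with $\Delta_1$ is Proposition 2.25 of \cite{LPW}, and your remaining two steps then coincide with the paper's proof.
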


To study convolution inequalities on quantum symmetries captured by a tracial von Neumann algebra $(\mathcal{M},\tau)$, we introduce a comultiplication $\Delta$: $\mathcal{M}$ $\to$  $\mathcal{M}\overline{\otimes}\mathcal{M}$ and a
 convolution $\ast$:  $L^1(\mathcal{M})\times L^1(\mathcal{M})$ $\to$ $L^1(\mathcal{M})$, which are dual to each other. We call $\Delta$ a \textbf{good $k$-comultiplication} if $\Delta\geq0$ and $\Delta(I)=kI$, $k>0$. It induces a \textbf{good $k$-convolution} (See Proposition \ref{prop:comultiplication positivity imply convolution positivity}), namely
\begin{enumerate}
\item Positivity:
\[ x\ast y\geq0,\quad \forall\ x,y\geq0,\ x,y\in\mathcal{M};\]
\item Primary Young's inequality:
\[\|x\ast y\|_{1}\leq k\|x\|_1\|y\|_1,\quad \forall\ x,y\in\mathcal{M};\]
\item Haar measure:
\[\tau(x\ast y)=k\tau(x)\tau(y),\quad \forall\ x,y\geq0,\ x,y\in\mathcal{M}.\]
\end{enumerate}
 Furthermore,  
a trace-preserving anti $*$-isomorphism $\rho$ on $\mathcal{M}$ is called an $\textbf{antipode}$ if 
 the Frobenius Reciprocity holds:
\[\tau((x\ast y)z)=\tau 
((\rho(z)\ast x)\rho(y)),\quad \forall x,y,z\in\mathcal{M}.\]
We call the quadruple $(\mathcal{M},\tau,\ast,\rho)$ a \textbf{Frobenius von Neumann $k$-algebra (FN $k$-algebra)}. This notion is highly inspired by subfactor theory and Hopf algebras. Specific examples of FN $k$-algebras come from subfactor planar algebras, fusion bi-algebras, Kac algebras, etc. 
In Section \ref{sec:Implement of Young inequality}, we prove the  quantum Young's inequality on FN 
$k$-algebras, which unifies the quantum Young's inequality on subfactor planar algebras (See Theorem 4.13 in \cite{JLW16}) and fusion bi-algebras (See Theorem 5.11 in \cite{LPW}).
\begin{theorem}[See Theorem \ref{thm:quantum Young inequality}]\label{thm:main thm 2}
Let $(\mathcal{M},\tau,\ast,\rho)$ be a  FN $k$-algebra. Then for any $x,y\in\mathcal{M}$, 
 $1\leq p,q,r\leq\infty$ with $1+1/r=1/p+1/q$, we have
\begin{align}\label{eq:Young inequality introduction}
\|x\ast y\|_r\leq k\|x\|_p\|y\|_q.
\end{align}
\end{theorem}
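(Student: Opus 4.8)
The plan is to reduce \eqref{eq:Young inequality introduction} to a handful of endpoint estimates and then recover all admissible exponents by complex interpolation of the noncommutative $L^p$-spaces $L^p(\mathcal M,\tau)$, using the duality $L^\infty(\mathcal M,\tau)=L^1(\mathcal M,\tau)^\ast$ together with Frobenius Reciprocity to manufacture the mixed endpoints. Since $\tau$ is finite, $\mathcal M=L^\infty(\mathcal M,\tau)\hookrightarrow L^p(\mathcal M,\tau)$ for every $p$ and the convolution restricts to $\mathcal M\times\mathcal M$, so it suffices to prove the inequality for $x,y\in\mathcal M$; the interpolated operators below are built from the maps $x\mapsto x\ast y$ and $y\mapsto x\ast y$, which are bounded on $L^1$ and on $L^\infty$. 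I will also use that $\rho$, being trace preserving and (operator-norm) isometric, preserves every norm $\|\cdot\|_p$.

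First I would record the base cases, which are exactly the three vertices $(p,q,r)=(1,1,1),\,(1,\infty,\infty),\,(\infty,1,\infty)$ of the triangle $\{(1/p,1/q,1/r)\in[0,1]^3:\ 1/p+1/q=1+1/r\}$. The vertex $(1,1,1)$ is the Primary Young's inequality $\|x\ast y\|_1\le k\|x\|_1\|y\|_1$, available from Proposition \ref{prop:comultiplication positivity imply convolution positivity}. For the vertex $(1,\infty,\infty)$ I would argue by duality: for $z\in\mathcal M$, Frobenius Reciprocity and Hölder give
\[
|\tau((x\ast y)z)|=|\tau((\rho(z)\ast x)\rho(y))|\le\|\rho(z)\ast x\|_1\,\|\rho(y)\|_\infty\le k\|z\|_1\,\|x\|_1\,\|y\|_\infty ,
\]
so taking the supremum over $z\in\mathcal M$ with $\|z\|_1\le1$ shows $x\ast y\in\mathcal M$ with $\|x\ast y\|_\infty\le k\|x\|_1\|y\|_\infty$.

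Next, fixing $x\in\mathcal M$, the map $y\mapsto x\ast y$ is bounded $L^1\to L^1$ and $L^\infty\to L^\infty$, in each case of norm $\le k\|x\|_1$ (the two vertices above), so complex interpolation yields $\|x\ast y\|_q\le k\|x\|_1\|y\|_q$ for $1\le q\le\infty$. Reinserting this into the duality computation, now estimating $|\tau((\rho(z)\ast x)\rho(y))|\le\|\rho(z)\ast x\|_{q'}\|\rho(y)\|_q$ with $1/q+1/q'=1$, gives the second family
\[
\|x\ast y\|_\infty\le k\|x\|_{q'}\|y\|_q\qquad(1\le q\le\infty).
\]
Finally I would fix $y\in\mathcal M$ and interpolate $x\mapsto x\ast y$ between its bound $L^1\to L^q$ of norm $\le k\|y\|_q$ and its bound $L^{q'}\to L^\infty$ of norm $\le k\|y\|_q$: with parameter $\theta$ the exponents become $1/p=(1-\theta)+\theta/q'$ and $1/r=(1-\theta)/q$, which satisfy $1/p+1/q-1=1/r$, and the interpolated norm is $\le k\|y\|_q$. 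As $q\in[1,\infty]$ and $\theta\in[0,1]$ vary, these triples exhaust precisely $\{1+1/r=1/p+1/q\}$, proving \eqref{eq:Young inequality introduction}.

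The step I expect to be most delicate is not any single inequality but the careful execution of the interpolation in the noncommutative setting: verifying that $x\mapsto x\ast y$ and $y\mapsto x\ast y$ extend to, and are compatible with the complex interpolation scale of, the spaces $L^p(\mathcal M,\tau)$, and — so that the sharp factor $k$ is not degraded — that at each interpolation step the two endpoint norms genuinely coincide (both equal $k\|x\|_1$, respectively $k\|y\|_q$), so no loss occurs. The conceptual heart of the argument is the coupling of the duality $L^\infty=(L^1)^\ast$ with Frobenius Reciprocity, which is what converts the crude $L^1$-bound into the mixed endpoints that drive the interpolation.
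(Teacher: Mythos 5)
Your proposal is correct and follows essentially the same route as the paper: establish the endpoint $\|x\ast y\|_\infty\le k\|x\|_1\|y\|_\infty$ by duality via Frobenius Reciprocity, interpolate to get $\|x\ast y\|_q\le k\|x\|_1\|y\|_q$, feed that back through the duality to obtain $\|x\ast y\|_\infty\le k\|x\|_{q'}\|y\|_q$, and conclude by a second interpolation (the paper's Lemmas \ref{lem:Young inequality infty1infty}--\ref{lem:young inequality infty p q} together with Proposition \ref{prop:Interpolation Theorem}). The only cosmetic difference is that you bypass the symmetric endpoint $\|x\ast y\|_\infty\le k\|x\|_\infty\|y\|_1$ and the family $\|x\ast y\|_p\le k\|x\|_p\|y\|_1$, which the paper records but which are not needed for the final interpolation as you arrange it.
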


In Section \ref{sec:entropy convolution inequality},
 we prove quantum entropic convolution inequalities (qECI) on FN $k$-algebras. Moreover, we characterize the extremizers of qECI when the FN $k$-algebras are from subfactors. 
\begin{theorem}[See Theorem \ref{thm:entropy convolution inequality}]
Let $(\mathcal{M},\tau,\ast,\rho)$ be a FN $k$-algebra. For any positive operators  $x,y\in\mathcal{M}$ with $\|x\|_{1}=\|y\|_1=k^{-1}$,  $0\leq\theta\leq1$, we have
\begin{align}\label{eq:entropy convolution inequality introduction}
H(x\ast y)\geq\theta H( x)+ (1-\theta)H( y),
\end{align}
where $H(x)=\tau(-x\log x)$ is the von Neumann entropy.
\end{theorem}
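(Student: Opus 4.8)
The plan is to derive \eqref{eq:entropy convolution inequality introduction} by differentiating the quantum Young's inequality of Theorem~\ref{thm:main thm 2} at the central exponent $p=q=r=1$, following the classical route from sharp Young-type inequalities to entropic inequalities.

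Fix positive operators $x,y\in\mathcal M$ with $\|x\|_1=\|y\|_1=k^{-1}$ and $\theta\in[0,1]$. For $t\in[0,1]$ introduce exponents
\[
\frac1{p(t)}=1-\theta t,\qquad \frac1{q(t)}=1-(1-\theta)t,\qquad \frac1{r(t)}=1-t ,
\]
which satisfy $1+1/r(t)=1/p(t)+1/q(t)$ and $1\le p(t),q(t),r(t)\le\infty$; Theorem~\ref{thm:main thm 2} then gives, for all $t\in[0,1]$,
\[
\Phi(t):=\log\|x\ast y\|_{r(t)}-\log k-\log\|x\|_{p(t)}-\log\|y\|_{q(t)}\ \le\ 0 .
\]
The decisive point is that $\Phi(0)=0$: since $x,y\ge0$, positivity of the good $k$-convolution yields $x\ast y\ge0$, and the Haar-measure identity gives
\[
\|x\ast y\|_1=\tau(x\ast y)=k\,\tau(x)\tau(y)=k\|x\|_1\|y\|_1=k^{-1},
\]
so that $\Phi(0)=\log(k^{-1})-\log k-\log(k^{-1})-\log(k^{-1})=0$. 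Hence $0$ maximizes $\Phi$ on $[0,1]$, every difference quotient $\Phi(t)/t$ with $t>0$ is $\le0$, and therefore the right derivative obeys $\Phi'(0^{+})\le0$ once it exists.

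It remains to evaluate $\Phi'(0^{+})$. For a positive operator $h\in\mathcal M$ with $\tau(h)=k^{-1}$ we have $\log\|h\|_{s}=s^{-1}\log\tau(h^{s})$, and since $h$ commutes with $\log h$,
\[
\frac{d}{ds}\Big|_{s=1^{+}}\log\|h\|_{s}=-\log\tau(h)+\frac{\tau(h\log h)}{\tau(h)}=\log k-k\,H(h),
\]
using $\tau(h\log h)=-H(h)$. Since $p'(0)=\theta$, $q'(0)=1-\theta$, $r'(0)=1$, the chain rule gives
\[
\Phi'(0^{+})=\bigl(\log k-k\,H(x\ast y)\bigr)-\theta\bigl(\log k-k\,H(x)\bigr)-(1-\theta)\bigl(\log k-k\,H(y)\bigr)=k\bigl(\theta H(x)+(1-\theta)H(y)-H(x\ast y)\bigr),
\]
the $\log k$ terms cancelling because $1-\theta-(1-\theta)=0$. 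Together with $\Phi'(0^{+})\le0$ and $k>0$, this is precisely \eqref{eq:entropy convolution inequality introduction}.

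The main obstacle is the analytic justification of this differentiation and the control of possibly infinite entropies. When $\mathcal M$ is finite-dimensional — which already covers the FN $k$-algebras arising from subfactor planar algebras and fusion bi-algebras — the computation is literal. In general, the operators $\frac{h^{s}-h}{s-1}$ decrease to $h\log h$ as $s\downarrow1$ and are dominated above by $h^{2}-h$, which is $\tau$-integrable because $h$ is bounded with $\tau(h)<\infty$; monotone convergence then yields $\frac{d}{ds}\tau(h^{s})\big|_{s=1^{+}}=\tau(h\log h)\in[-\infty,\infty)$ and hence the displayed derivative formula. Moreover $\tau(\{x>1\})\le\tau(x)<\infty$ forces $\tau\bigl((x\log x)_{+}\bigr)<\infty$, so each entropy lies in $(-\infty,\infty]$; and in the borderline case where a right-hand entropy equals $+\infty$, the same derivative estimate (otherwise $\Phi'(0^{+})$ would be $+\infty$, contradicting $\Phi'(0^{+})\le0$) forces $H(x\ast y)=+\infty$, so that \eqref{eq:entropy convolution inequality introduction} reduces to the trivial $+\infty\ge+\infty$. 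Assembling these ingredients completes the argument.
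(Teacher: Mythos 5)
Your proposal is correct and follows essentially the same route as the paper's proof of Theorem \ref{thm:entropy convolution inequality}: parametrize a family of exponents with $1+1/r=1/p+1/q$ collapsing to $p=q=r=1$, observe that quantum Young's inequality combined with the Haar-measure identity makes that endpoint a point of equality, and differentiate there to extract the entropy inequality. Your additional care about differentiability and infinite entropies is sound but largely unnecessary here, since $\tau$ is a (finite) positive linear functional and $x\ast y$ is bounded by Young's inequality with $(r,p,q)=(\infty,1,\infty)$, so all entropies involved are automatically finite.
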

We introduce the notion of smooth convolution entropy.
 \begin{definition}\label{def:convolution_smooth_entropy}
Let $(\mathcal{M},\tau)$ be a tracial  
von Neumann  algebra with a good $k$-convolution $\ast$.
For any positive operators $x,y\in\mathcal{M}$, $\epsilon,\eta\in [0,1]$ and $p,q\in[1,\infty]$,
the \textbf{smooth convolution  entropy}  is defined by
\begin{align}\label{eq:definition smooth convolution entropy}
H_{\epsilon,\eta}^{p,q}(x\ast y):=\inf\{H(z\ast w):\  z,w\in\mathcal{M},\,z,w\geq0,\ \|x-z\|_p\leq\epsilon,\ \|y-w\|_q\leq\eta\}. 
\end{align}
\end{definition}
For any $x\in\mathcal{M}$, $x\geq0$, the smooth entropy of $x$ with $1\leq p\leq\infty$, $0\leq\epsilon\leq1$ is defined as
\begin{align*}
H_{\epsilon}^p(x):=\sup\{H(y):\ y\in\mathcal{M},\ y\geq0,\ \|y-x\|_p\leq \epsilon \}.
\end{align*}
For a finite dimensional FN $k$-algebra $(\mathcal{M},\tau,\ast,\rho)$,
we call $\tau(I)$ the \textbf{Frobenius-Perron dimension} of $\mathcal{M}$, denoted by $d$. We set $\lambda=\min\{\tau(e):\ \text{$e$ is a 
projection in $\mathcal{M}$}\}$.  In Section \ref{sec:entropy convolution inequality}, we prove the quantum smooth entropic convolution inequality.
\begin{theorem}[See Theorem \ref{thm: convolution smooth entropy inequality}]
Let $(\mathcal{M},\tau,\ast,\rho)$ be a finite dimensional  FN $k$-algebra.
Let $p,q\in[1,\infty]$, 
 $\epsilon,\eta,\in[0,1]$, $\epsilon+\eta\leq\displaystyle\frac{1}{(d+1)(1+k(d+1))}$.
 For any positive operators $x,y\in\mathcal{M}$ with $\|x\|_1=\|y\|_1=k^{-1}$, $0
\leq\theta\leq1$, we have
 \begin{align}\label{eq:quantum smooth convolution entropy inequality introduction}
H_{\epsilon,\eta}^{p,q}(x\ast y)
 \geq&\theta H_{\epsilon}^{p}(x)+(1-\theta)H_{\eta}^{q}(y)-O_{d,\lambda,k}(|\epsilon\log\epsilon|)-O_{d,\lambda,k}(|\eta\log\eta|).
 \end{align}
 \end{theorem}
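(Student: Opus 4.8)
The plan is to reduce \eqref{eq:quantum smooth convolution entropy inequality introduction} to the unsmoothed quantum entropic convolution inequality (Theorem \ref{thm:entropy convolution inequality}) by a normalization-and-perturbation argument, tracking error terms carefully. Since the left-hand side is an infimum, it suffices to bound $H(z\ast w)$ from below by the right-hand side for every admissible pair $z,w\geq 0$ with $\|x-z\|_p\leq\epsilon$ and $\|y-w\|_q\leq\eta$. Because $\mathcal{M}$ is finite dimensional, $L^p$-balls are compact and $H$ is continuous on the positive cone, so the suprema defining $H_{\epsilon}^{p}(x)$ and $H_{\eta}^{q}(y)$ are attained, say at $z^{\ast},w^{\ast}\geq 0$ with $\|x-z^{\ast}\|_p\leq\epsilon$, $H(z^{\ast})=H_{\epsilon}^{p}(x)$, and similarly for $w^{\ast}$. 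Hölder's inequality $\|a\|_1\leq\tau(I)^{1-1/p}\|a\|_p\leq d\,\|a\|_p$ converts every $p$- or $q$-bound into an $L^1$-bound: $\|z-z^{\ast}\|_1\leq 2d\epsilon$, $\|w-w^{\ast}\|_1\leq 2d\eta$, $\big|\|z^{\ast}\|_1-k^{-1}\big|\leq d\epsilon$, $\big|\|w^{\ast}\|_1-k^{-1}\big|\leq d\eta$, and likewise for $z,w$. The hypothesis $\epsilon+\eta\leq\frac{1}{(d+1)(1+k(d+1))}$ is precisely what keeps all these operators nonzero and the normalization constants introduced below inside a fixed bounded interval; the optimal value of this constant is not pursued.

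Normalize the optimizers: put $c:=k\|z^{\ast}\|_1$, $c':=k\|w^{\ast}\|_1$ (both near $1$), $\hat z:=z^{\ast}/c$, $\hat w:=w^{\ast}/c'$, so $\hat z,\hat w\geq 0$ and $\|\hat z\|_1=\|\hat w\|_1=k^{-1}$. Theorem \ref{thm:entropy convolution inequality} gives $H(\hat z\ast\hat w)\geq\theta H(\hat z)+(1-\theta)H(\hat w)$. Using the scaling identity $H(ca)=cH(a)-c(\log c)\tau(a)$, the facts $\tau(z^{\ast})=\|z^{\ast}\|_1=c/k$, $\tau(w^{\ast})=c'/k$, $\hat z\ast\hat w=(z^{\ast}\ast w^{\ast})/(cc')$, and the Haar-measure identity $\tau(\hat z\ast\hat w)=k\tau(\hat z)\tau(\hat w)=k^{-1}$, a short computation rewrites the previous inequality as
\[
H(z^{\ast}\ast w^{\ast})\ \geq\ \theta\,c'\,H(z^{\ast})+(1-\theta)\,c\,H(w^{\ast})-R,
\]
where $R=\tfrac{cc'}{k}\big((1-\theta)\log c+\theta\log c'\big)$. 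Since $|c-1|\leq kd\epsilon$ and $|c'-1|\leq kd\eta$, we get $|\log c|,|\log c'|=O_{d,k}(\epsilon+\eta)$ and $|R|=O_{d,k}(\epsilon+\eta)$. Moreover a nonzero eigenvalue $\mu$ of $z^{\ast}$ satisfies $\lambda\mu\leq\tau(z^{\ast})\leq k^{-1}+1$, hence $\mu\leq(k^{-1}+1)/\lambda$, so $|H(z^{\ast})|\leq d\max\{e^{-1},\,\tfrac{k^{-1}+1}{\lambda}\log\tfrac{k^{-1}+1}{\lambda}\}=O_{d,\lambda,k}(1)$, and likewise $|H(w^{\ast})|=O_{d,\lambda,k}(1)$. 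Therefore $\theta c'H(z^{\ast})=\theta H(z^{\ast})+O_{d,\lambda,k}(\eta)$ and $(1-\theta)cH(w^{\ast})=(1-\theta)H(w^{\ast})+O_{d,\lambda,k}(\epsilon)$, so, using $H(z^{\ast})=H_{\epsilon}^{p}(x)$ and $H(w^{\ast})=H_{\eta}^{q}(y)$,
\[
H(z^{\ast}\ast w^{\ast})\ \geq\ \theta H_{\epsilon}^{p}(x)+(1-\theta)H_{\eta}^{q}(y)-O_{d,\lambda,k}(\epsilon+\eta).
\]

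It remains to transport this estimate from $z^{\ast}\ast w^{\ast}$ to $z\ast w$. By the primary Young inequality applied to $z\ast w-z^{\ast}\ast w^{\ast}=(z-z^{\ast})\ast w+z^{\ast}\ast(w-w^{\ast})$,
\[
\|z\ast w-z^{\ast}\ast w^{\ast}\|_1\leq k\|z-z^{\ast}\|_1\|w\|_1+k\|z^{\ast}\|_1\|w-w^{\ast}\|_1=O_{d,k}(\epsilon+\eta),
\]
using $\|w\|_1,\|z^{\ast}\|_1\leq k^{-1}+1$. Both $z\ast w$ and $z^{\ast}\ast w^{\ast}$ are positive elements of the $d$-dimensional algebra $\mathcal{M}$ with trace $k\|z\|_1\|w\|_1$, resp.\ $k\|z^{\ast}\|_1\|w^{\ast}\|_1$, which lie in a fixed positive interval depending only on $d,k$; hence a Fannes-type continuity estimate for the von Neumann entropy yields $|H(z\ast w)-H(z^{\ast}\ast w^{\ast})|=O_{d,\lambda,k}(|\delta\log\delta|)$ with $\delta=\|z\ast w-z^{\ast}\ast w^{\ast}\|_1=O_{d,k}(\epsilon+\eta)$. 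Since $\epsilon+\eta$ lies below the small threshold in the hypothesis, both $|\delta\log\delta|$ and the previous $O_{d,\lambda,k}(\epsilon+\eta)$ are dominated by $O_{d,\lambda,k}(|\epsilon\log\epsilon|)+O_{d,\lambda,k}(|\eta\log\eta|)$. Combining the two displayed bounds gives
\[
H(z\ast w)\ \geq\ \theta H_{\epsilon}^{p}(x)+(1-\theta)H_{\eta}^{q}(y)-O_{d,\lambda,k}(|\epsilon\log\epsilon|)-O_{d,\lambda,k}(|\eta\log\eta|),
\]
and taking the infimum over admissible $z,w$ finishes the proof. The main obstacle I anticipate is the error bookkeeping: one must verify that the Fannes-type estimate converts an $O(\epsilon+\eta)$ trace-norm perturbation into an error of order $|\epsilon\log\epsilon|$ (and no worse), and that $|H(z^{\ast})|,|H(w^{\ast})|$ admit a bound depending only on $d,\lambda,k$, so that the corrections $\theta(c'-1)H(z^{\ast})$ and $(1-\theta)(c-1)H(w^{\ast})$ cost only $O(\epsilon+\eta)$. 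These are exactly the places where finite dimensionality and the minimal-trace parameter $\lambda$ enter, and the stated smallness of $\epsilon+\eta$ must be chosen compatibly with all the implicit constants that arise.
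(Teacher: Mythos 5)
Your argument is correct in substance and rests on the same two pillars as the paper's proof: the unsmoothed inequality of Theorem \ref{thm:entropy convolution inequality}, and a Fannes-type H\"older continuity of the entropy under $L^1$-perturbations (which the paper isolates as Propositions \ref{prop:continuity_von_entropy} and \ref{prop:continuity convolution entropy}). The difference is where you anchor the reduction. The paper never touches the optimizers of the smooth entropies: it applies the qECI directly to $x,y$ themselves (which already have the exact traces $k^{-1}$, so no normalization is needed), bounds $H^p_\epsilon(x)\le H(x)+O(|\epsilon\log\epsilon|)$ from above and $H^{p,q}_{\epsilon,\eta}(x\ast y)\ge H(x\ast y)-O(|\epsilon\log\epsilon|)-O(|\eta\log\eta|)$ from below, and concludes. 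You instead apply the qECI to the normalized optimizers $\hat z,\hat w$, which forces the extra scaling bookkeeping (the correction $R$ and the a priori bound $|H(z^*)|=O_{d,\lambda,k}(1)$, both handled correctly) and then a second perturbation step to pass from $z^{\ast}\ast w^{\ast}$ to $z\ast w$; all of this is valid but avoidable. Two small points to tidy up: (i) the conversion $\|a\|_1\le d\,\|a\|_p$ should read $\|a\|_1\le d^{1-1/p}\|a\|_p\le (d+1)\|a\|_p$ so as not to assume $d\ge 1$; (ii) your transport step needs $\delta=\|z\ast w-z^{\ast}\ast w^{\ast}\|_1$ to lie in the range where the Fannes-type estimate applies, and your bound $\delta\le 2(1+k)(d+1)(\epsilon+\eta)$ can exceed $1$ under the stated threshold $\epsilon+\eta\le\frac{1}{(d+1)(1+k(d+1))}$ --- the factor $2$ arises precisely because you perturb from $(z^{\ast},w^{\ast})$ rather than from $(x,y)$ --- so to obtain the theorem with this exact threshold you should either anchor at $(x,y)$ as the paper does or extend the continuity estimate to cover $\delta\le 2$.
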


  The paper is organized as follows: In Section \ref{sec:Comultiplications and convolution von Neumann algebras}, we  prove the positivity 
  of comultiplications on subfactors, which 
provides  analytic obstructions of unitary categorifications  of fusion rings stronger than the one from the positivity of convolution \cite{LPW}.  Section \ref{sec:Implement of Young inequality} is devoted to 
the proof of the quantum Young's inequalities on FN $k$-algebras, which
 unifies the quantum Young's inequalities on subfactor planar algebras and fusion bi-algebras. In the last section, we show the quantum (smooth) entropic convolution inequalities on FN $k$-algebras. Moreover,  the extremizers of qECI are characterized when the FN $k$-algebras are from subfactors.
  
\begin{ac}
Linzhe Huang was supported by YMSC, Tsinghua University.
Zhengwei Liu was supported by NKPs (Grant no. 2020YFA0713000), by Tsinghua University (Grant no. 04200100122) and by Templeton Religion Trust (TRT 159).
Jinsong Wu was supported by NSFC 
(Grant no. 12031004) and a grant from Beijing Institute of Mathematical Sciences and Applications.. 
\end{ac}

\section{Positivity of Comultiplications and Unitary categorification Criterion}\label{sec:Comultiplications and convolution von Neumann algebras}
In this section, we will introduce the notion of comultiplications on von Neumann algebras. We prove the positivity of the comultiplications on subfactor planar algebras, which leads to an  analytic criterion of unitary categorifications of fusion rings stronger than Schur product criterion (See Proposition 9.3 in \cite{LPW}). 
\subsection{Preliminaries}We first recall some basic definitions of von Neumann algebras and non-commutative $L^p$ spaces.
Let $\mathcal{M}$ be a
von Neumann algebra acting on a Hilbert space $\mathcal{H}$ with a faithful normal tracial
positive linear functional $\tau$, see e.g. \cite{KR97}.
 We simply call $\tau$ a trace and the pair $(\mathcal{M},\tau)$ as a tracial von Neumann algebra in the rest of the paper. 
 We refer the readers to
e.g. \cite{Ter81} and \cite{Ter82} for details about non-commutative $L^p$ spaces.

A closed densely defined operator $x$ affiliated with $\mathcal{M}$ is call $\tau$-measurable if for all $\epsilon>0$ there exists a projection $P\in\mathcal{M}$ such that $P\mathcal{H}\subset \mathcal{D}(x)$, and $\tau(I-P)\leq\epsilon$, where $\mathcal{D}(x)$ is the domain of $x$. We denote the set of all $\tau$-measurable closed densely defined operators by $\widetilde{\mathcal{M}}$.  Then $\widetilde{\mathcal{M}}$ is $*$-algebra
with respect to a strong sum, strong product, and adjoint operation. If $x$ is a positive self-adjoint $\tau$-measurable  operator, then $x^{\alpha}\log x$ is also $\tau$-measurable for any $\alpha\in\mathbb{C}$ with positive real part.

The sets
\begin{align*}
U(\epsilon,\eta)=\{ x\in\widetilde{\mathcal{M}}: \exists\ \text{a projection}\ P\in\mathcal{M}\ \text{satsfying}\ P\mathcal{H}\subseteq \mathcal{D}(x),\ \|xP\|\leq\epsilon,\ \tau(I-P)\leq\eta\},
\end{align*}
where $\epsilon,\eta>0$, form a neighborhood basis of $0$  that makes $\widetilde{\mathcal{M}}$ into a
topological vector space. Now $\widetilde{\mathcal{M}}$ is a complete Hausdorff topological $\ast$-algebra and $\mathcal{M}$ is a dense subset of $\widetilde{\mathcal{M}}$.

For any positive self-adjoint operator $x$ affiliated with $\mathcal{M}$, we set
\[\tau(x)=\sup_{n\in\mathbb{N}}\tau(\int_{0}^n tde_t),\] 
 where $x=\int_{0}^\infty tde_t$ is the spectral decomposition of $x$. Then for $p\in[1,\infty)$, the non-commutative $L^p$ space $L^p(\mathcal{M})$ with respect to $\tau_{\mathcal{M}}$
is defined as
\[L^p(\mathcal{M}):=\{x\ \text{closed, densely defined, affiliated with}\ \mathcal{M}:\tau(|x|^p)<\infty  \}.\] 
 The $p$-norm of $x$ is given by
 $\|x\|_p =\tau(|x|^p)^{1/p}$, where $|x|=(x^*x)^{1/2}$. In particular, $\|x\|_{\infty}=\|x\|$, the operator norm.
 We have that $L^p(\mathcal{M})\subseteq\widetilde{\mathcal{M}}$. 
  The following non-commutative  H\"older's inequality will be used frequently in the whole paper.
\begin{proposition}[H\"older's inequality]\label{prop:Holder_inequality}
For any $x,y,z\in\mathcal{M}$, we have
\begin{enumerate}
\item $|\tau(xy)|\leq\|x\|_p\|y\|_q$, where $1\leq p,q\leq\infty$, $\frac{1}{p}+\frac{1}{q}=1$;
\item $|\tau(xyz)|\leq\|x\|_p\|y\|_q\|z\|_r$, where $1\leq p,q,r\leq\infty$, $\frac{1}{p}+\frac{1}{q}+\frac{1}{r}=1$;
\item $\|xy\|_r\leq\|x\|_p\|y\|_q$, where $0< p,q,r\leq\infty$, $\frac{1}{r}=\frac{1}{p}+\frac{1}{q}$.
\end{enumerate}
\begin{proof}
See e.g. Theorems 5.2.2 and 5.2.4 in \cite{Xu}.
\end{proof}
\end{proposition}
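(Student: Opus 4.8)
The plan is to derive all three inequalities from a single analytic source: the Hadamard three-lines theorem, combined with polar decomposition and two elementary facts valid in $\widetilde{\mathcal{M}}$, namely that imaginary powers of a positive $\tau$-measurable operator are unitaries on their support, and that $|\tau(\cdot)|\le\|\cdot\|_1$ with $\|AB\|_1\le\|A\|_\infty\|B\|_1$. First I would record the endpoint bounds $\|AB\|_r\le\|A\|_\infty\|B\|_r$ and $\|AB\|_r\le\|A\|_r\|B\|_\infty$, valid for every $0<r\le\infty$ (from $B^*A^*AB\le\|A\|_\infty^2 B^*B$ and trace monotonicity of the singular numbers under $t\mapsto t^{r/2}$), and dispose of every case in which some exponent equals $1$ or $\infty$, since these reduce immediately to such operator-norm estimates. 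The genuine content therefore lies in the interior range $1<p,q,r<\infty$.

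For item (1), I would normalize $\|x\|_p=\|y\|_q=1$, take polar decompositions $x=ua$, $y=vb$ with $a=|x|,b=|y|\ge0$, and on the strip $0\le\Re z\le1$ consider the analytic function $F(z)=\tau\!\left(u\,a^{pz}\,v\,b^{q(1-z)}\right)$, which at $z=1/p$ equals $\tau(xy)$ because $1-1/p=1/q$. On the line $\Re z=0$ the factor $a^{pz}$ is a unitary and $b^{q(1-z)}=b^{q}b^{-qz}$, so $|F(it)|\le\|u a^{pit}v\|_\infty\,\|b^{q}\|_1=\tau(b^q)=1$; symmetrically $|F(1+it)|\le\tau(a^p)=1$. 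The three-lines theorem then forces $|F(1/p)|\le1$, which is $|\tau(xy)|\le\|x\|_p\|y\|_q$.

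For item (3) (taking $r\ge1$), a direct computation shows that for $w=u|w|$ the element $z=\|w\|_r^{-(r-1)}|w|^{r-1}u^*$ satisfies $\|z\|_{r'}=1$ and $\tau(wz)=\|w\|_r$ (using $(r-1)r'=r$), so $\|w\|_r\le\sup\{|\tau(wz)|:\|z\|_{r'}\le1\}$. To bound each $\tau(xyz)$ I would run a second three-lines argument interpolating both factors simultaneously between the elementary endpoints above: with $\|x\|_p=\|y\|_q=1$, $\|z\|_{r'}=1$, and $\theta=r/p$ (so $1-\theta=r/q$), set $x_\zeta=u\,a^{p\zeta/r}$, $y_\zeta=v\,b^{q(1-\zeta)/r}$ and $G(\zeta)=\tau(x_\zeta y_\zeta z)$, so that $G(\theta)=\tau(xyz)$. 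On $\Re\zeta=0$ one has $\|x_{it}\|_\infty\le1$ and $\|y_{it}\|_r=\tau(b^q)^{1/r}=1$, whence $|G(it)|\le\|x_{it}\|_\infty\|y_{it}z\|_1\le\|y_{it}\|_r\|z\|_{r'}\le1$ by item (1); symmetrically $|G(1+it)|\le1$. The three-lines theorem gives $|\tau(xyz)|\le1$, and the supremum over $z$ yields $\|xy\|_r\le1$. Item (2) is then immediate from (1) and (3): writing $\tau(xyz)=\tau((xy)z)$ and applying (1) to the pair $(xy,z)$ with conjugate exponents $s,r$ where $1/s=1/p+1/q=1-1/r$, the bound $\|xy\|_s\le\|x\|_p\|y\|_q$ from (3) gives $|\tau(xyz)|\le\|x\|_p\|y\|_q\|z\|_r$.

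The main obstacle is analytic rather than algebraic: I must verify that the operator-valued families $z\mapsto a^{pz}$, built from the Borel functional calculus of the possibly unbounded positive $\tau$-measurable operators $a,b$, produce functions $F$ and $G$ that are bounded and continuous on the closed strip and holomorphic in its interior, with every intermediate product $\tau$-measurable and of finite trace, so that Hadamard's theorem legitimately applies. A secondary point is the quasi-norm range $r<1$ in item (3), where $\|\cdot\|_r$ is no longer subadditive; I would treat this by a standard power/scaling reduction to the normed case rather than by interpolation.
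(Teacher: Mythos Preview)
The paper does not actually prove this proposition; its entire proof is the one-line citation ``See e.g.\ Theorems 5.2.2 and 5.2.4 in \cite{Xu}.'' Your proposal therefore goes well beyond what the paper does, supplying a genuine argument where the authors simply invoke a reference.

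Your approach via the Hadamard three-lines theorem is one of the standard routes to the noncommutative H\"older inequality and is essentially correct as sketched. The argument for item (1) is clean. In the argument for item (3) you invoke the bound $\|y_{it}z\|_1\le\|y_{it}\|_r\|z\|_{r'}$ ``by item (1)''; strictly speaking (1) controls $|\tau(\cdot)|$ rather than $\|\cdot\|_1$, but the passage is routine (polar decompose $y_{it}z=w|y_{it}z|$ and apply (1) to $\tau(w^*y_{it}\cdot z)$), so this is a cosmetic gap rather than a real one. The analyticity and boundedness of $F$ and $G$ on the closed strip, which you flag as the main obstacle, is indeed the place where care is needed with unbounded $\tau$-measurable operators; the standard fix is to first prove everything for bounded $x,y\in\mathcal{M}$ (where the functional calculus is unproblematic) and then pass to $L^p$ by density. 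Your handling of the quasi-norm range $r<1$ is the least developed part: ``a standard power/scaling reduction'' is vague, and the cleanest arguments here typically go through generalized singular numbers (Fack--Kosaki log-submajorization $\int_0^t\log\mu_s(xy)\,ds\le\int_0^t\log\mu_s(x)\,ds+\int_0^t\log\mu_s(y)\,ds$) rather than a direct scaling trick. If you want a self-contained three-lines proof you should spell this reduction out explicitly, or else restrict item (3) to $r\ge1$ as the paper's applications in fact only require.
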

\subsection{Comultiplication and Convolution}
The comultiplication and convolution appear as a pair of dual operators on von Neumann algebras. The study of the comultiplication turns out to be more fundamental than the study of the convolution while establishing convolution inequalities on quantum symmetries.
\begin{definition}\label{def:comultiplication}Let $\mathcal{M}$ be a tracial von Neumann algebra.
 A \textbf{k-comultiplication} $\Delta$ is a linear normal map from $\mathcal{M}$ into the spatial tensor product
$\mathcal{M}\overline{\otimes}\mathcal{M}$ with operator norm $\|\Delta\|=\sup_{x\in\mathcal{M}}\frac{\|\Delta(x)\|}{\|x\|}=k$. 
\end{definition}
We shall note that a $k$-comultiplication  may not be a 
 homomorphism, which is different from the case of Kac algebras. 
  A $k$-co-multiplication $\Delta$
induces a \textbf{$k$-convolution} $\ast$  defined as follows:
\begin{align}\label{eq:comul induce convolution}
\left\langle x\ast y,z\right\rangle=\left\langle x\otimes  y,\Delta(z)\right\rangle,\quad \forall x,y,z\in\mathcal{M}.
\end{align}
By H\"older's inequality, we have
\begin{align*}
|\left\langle x\ast y,z \right\rangle|=|\left\langle x\otimes y, \Delta(z)\right\rangle|
\leq\|x\otimes y\|_1\|\Delta(z)\|\leq k\|x\|_1\|y\|_1\|z\|.
\end{align*}
Therefore, \[\| x\ast y\|_1=\sup_{\|z\|=1}\{ |\left\langle x\ast y,z \right\rangle|\}
\leq k\|x\|_1\|y\|_1, \]
which implies that $x\ast y\in L^1(\mathcal{M})$. Consequently, a $k$-convolution satisfies the primary Young's inequality:
\begin{align}\label{eq:elementary Young inequality0}
\|x\ast y\|_1\leq k\|x\|_1\|y\|_1,\quad \forall  x,y\in\mathcal{M}.
\end{align}
 The properties of convolutions inherit the properties of comultiplications naturally.
\begin{proposition}\label{prop:comultiplication positivity imply convolution positivity}
Let $\Delta$ be a $k$-comultiplication and $\ast$ be the induced $k$-convolution. Then for any $x,y,z\in\mathcal{M}$, the following statements holds:
\begin{enumerate}
\item $\Delta\geq0$ implies $x\ast y\geq0$,  $x,y\geq0$;
\item $\Delta(I)=kI$ implies $\tau(x\ast y)=k\tau(x)\tau(y)$;
\item $(\Delta\otimes I)\Delta=(I\otimes \Delta)\Delta$ implies $(x\ast y)\ast z=x\ast (y\ast z)$.
\end{enumerate}
\begin{proof}
(1)
Let $x,y,z\geq0$, $x,y,z\in\mathcal{M}$. We have
\begin{align*}
\left\langle x\ast y,z\right\rangle=\left\langle x\otimes y,\Delta(z)\right\rangle\geq0.
\end{align*}
Therefore, $x\ast y\geq0$.

(2) We have
\begin{align*}
\tau(x\ast y)=\left\langle x\ast y,I\right\rangle=\left\langle x\otimes y,\Delta(I)\right\rangle=\left\langle x\otimes y,kI\right\rangle=k\tau(x)\tau(y).
\end{align*}

(3) For any $x,y,z,w\in\mathcal{M}$, we have
\begin{align*}
\left\langle(x\ast y)\ast z,w\right\rangle &=
\left\langle (x\ast y) \otimes z, \Delta(w)\right\rangle\\&=\left\langle x\otimes y \otimes z, (\Delta\otimes I)\Delta(w)\right\rangle\\
&=\left\langle x\otimes y \otimes z, (I\otimes\Delta)\Delta(w)\right\rangle\\
&=\left\langle x\otimes (y \ast z), \Delta(w)\right\rangle\\
&=\left\langle x\ast (y \ast z), w\right\rangle,
\end{align*}
which implies $(x\ast y)\ast z=x\ast (y \ast z)$.
\end{proof}
\end{proposition}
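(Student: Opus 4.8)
The plan is to derive all three implications from a single device: transport of structure through the defining duality $\langle x\ast y,z\rangle=\langle x\otimes y,\Delta(z)\rangle$. First I would record the two elementary facts that get used repeatedly. (i) Under the trace pairing $\langle a,b\rangle=\tau(ab)$ the functional $\langle \cdot,I\rangle$ is $\tau$ itself, and on the spatial tensor product the pairing factorizes on simple tensors, $\langle a\otimes b,c\otimes d\rangle=\tau(ac)\tau(bd)$. (ii) An element $h\in L^1(\mathcal{M})$ is positive if and only if $\tau(hz)\geq0$ for every positive $z\in\mathcal{M}$; moreover self-adjointness of such an $h$ already follows from $\tau(hz)\in\mathbb{R}$ for all such $z$, by faithfulness of $\tau$. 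With (i)--(ii) in hand the three parts become short.

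For (1): if $x,y\geq0$ in $\mathcal{M}$ then $x\otimes y\geq0$ in $\mathcal{M}\overline{\otimes}\mathcal{M}$, and for an arbitrary positive $z\in\mathcal{M}$ the hypothesis $\Delta\geq0$ gives $\Delta(z)\geq0$; hence $\langle x\ast y,z\rangle=\langle x\otimes y,\Delta(z)\rangle\geq0$ as the pairing of two positive elements. Since $x\ast y\in L^1(\mathcal{M})$ by the primary Young's inequality \eqref{eq:elementary Young inequality0}, fact (ii) upgrades these scalar inequalities to $x\ast y\geq0$. For (2): $\tau(x\ast y)=\langle x\ast y,I\rangle=\langle x\otimes y,\Delta(I)\rangle=k\langle x\otimes y,I\otimes I\rangle=k\tau(x)\tau(y)$, using $\Delta(I)=kI$ and the factorization in (i). For (3): apply the definition \eqref{eq:comul induce convolution} twice, once to unfold $(x\ast y)\ast z$ against $\Delta(w)$ and once more to unfold the inner $x\ast y$, arriving at $\langle x\otimes y\otimes z,(\Delta\otimes I)\Delta(w)\rangle$; symmetrically $\langle x\ast(y\ast z),w\rangle=\langle x\otimes y\otimes z,(I\otimes\Delta)\Delta(w)\rangle$. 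The coassociativity hypothesis makes these equal for every $w\in\mathcal{M}$, which forces $(x\ast y)\ast z=x\ast(y\ast z)$.

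The only step that is not pure bookkeeping is the passage in (1) from ``$\tau((x\ast y)z)\geq0$ for all positive $z\in\mathcal{M}$'' to the operator inequality ``$x\ast y\geq0$ in $\widetilde{\mathcal{M}}$'': this is exactly where one must invoke that $x\ast y$ genuinely lies in $L^1(\mathcal{M})$ together with the standard description of the positive cone of $L^1(\mathcal{M})$ via its trace pairing with $\mathcal{M}_+$. One should also verify the innocuous compatibilities behind (i) --- that the pairing \eqref{eq:comul induce convolution} extends $\tau$ and is multiplicative on elementary tensors of the spatial tensor product --- but these are classical facts of non-commutative $L^p$ duality (e.g.\ \cite{Ter81,Xu}). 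Everything else is a mechanical unwinding of \eqref{eq:comul induce convolution}.
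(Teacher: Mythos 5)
Your proof is correct and follows essentially the same route as the paper's: pair $x\ast y$ against positive $z$ for (1), against $I$ for (2), and unfold the duality twice for (3). The only difference is that you make explicit the step the paper leaves implicit, namely that $\tau((x\ast y)z)\geq 0$ for all $z\in\mathcal{M}_+$ identifies $x\ast y$ as an element of the positive cone of $L^1(\mathcal{M})$ --- a worthwhile clarification, but not a different argument.
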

 
 Note that every linear map $\widetilde{\Delta}$: $\mathcal{M}\to \mathcal{M}\overline{\otimes}\mathcal{M}$ satisfying  $\widetilde{\Delta}\geq0$ and $\widetilde{\Delta}(I)=kI$ has operator norm $k$ due to Russo-Dye Theorem \cite{RD66}:
 every unital positive linear map between two $C^*$-algebras is contractive.
\begin{definition}\label{def:good comultiplication}
A $k$-comultiplication $\Delta$: $\mathcal{M}\to \mathcal{M}\overline{\otimes}\mathcal{M}$ is called a \textbf{good $k$-comultiplication} if $\Delta\geq0$ and $\Delta(I)=kI$. We simply call $\Delta$ a \textbf{good comultiplication} when $k=1$, i.e., $\Delta$ is a unital positive linear map.
\end{definition}
\begin{remark}\label{rem:good comultiplication induce good convolution}
A good $k$-comultiplication $\Delta$ induces a 
\textbf{good $k$-convolution} $\ast$ satisfying the following properties  from Proposition \ref{prop:comultiplication positivity imply convolution positivity} and inequality \eqref{eq:elementary Young inequality0}:
\begin{enumerate}
\item Positivity:
\[ x\ast y\geq0,\quad \forall\ x,y\geq0,\ x,y\in\mathcal{M};\]
\item Primary Young's inequality:
\[\|x\ast y\|_{1}\leq k\|x\|_1\|y\|_1,\quad \forall\ x,y\in\mathcal{M};\]
\item Haar measure:
\[\tau(x\ast y)=k\tau(x)\tau(y),\quad \forall\ x,y\in\mathcal{M}.\]
\end{enumerate}
When $k=1$, we also simply call $\ast$ a \textbf{good convolution}.
\end{remark}
\begin{proposition}\label{prop:Haar imply Positivity}
The Haar measure implies Positivity.
\begin{proof}
For any $x,y\geq0$, $x,y\in\mathcal{M}$, we have
\begin{align*}
\|x\|_1\|y\|_1=\tau(x)\tau(y)=k^{-1}\tau(x\ast y)\leq k^{-1}\|x\ast y\|_1\leq\|x\|_1\|y\|_1.
\end{align*}
 Hence $\tau(x\ast y)=\|x\ast y\|_1$. This implies $x\ast y\geq0$.
\end{proof}
\end{proposition}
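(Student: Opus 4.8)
The plan is a short ``squeeze'' estimate followed by the order characterization of positive elements of $L^1(\mathcal{M})$. Fix positive operators $x,y\in\mathcal{M}$ and put $w:=x\ast y$, which lies in $L^1(\mathcal{M})$ by the primary Young's inequality \eqref{eq:elementary Young inequality0}. Since $x,y\geq0$ we have $\|x\|_1=\tau(x)$ and $\|y\|_1=\tau(y)$, so the Haar measure identity gives
\[
\tau(w)=k\,\tau(x)\tau(y)=k\,\|x\|_1\|y\|_1 .
\]
On the other hand, the trivial estimate $\tau(w)\le|\tau(w)|\le\|w\|_1$ (valid for every element of $L^1(\mathcal{M})$, e.g.\ by H\"older's inequality with the second factor $I$) combined with $\|w\|_1\le k\|x\|_1\|y\|_1=\tau(w)$ from \eqref{eq:elementary Young inequality0} yields the chain $\|w\|_1\le\tau(w)\le|\tau(w)|\le\|w\|_1$. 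Hence all quantities coincide; in particular $\tau(w)=\|w\|_1$.

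It then remains to prove the elementary fact that $w\in L^1(\mathcal{M})$ with $\tau(w)=\|w\|_1$ is positive. I would argue this via the polar decomposition $w=v|w|$ in $\widetilde{\mathcal{M}}$, where $v$ is a partial isometry with $v^*v=e$ the support projection of $|w|$, so that $|w|^{1/2}e=|w|^{1/2}$. Then $\|w\|_1=\tau(|w|)=\tau\bigl(|w|^{1/2}e|w|^{1/2}\bigr)$ while $\tau(w)=\tau\bigl(|w|^{1/2}v|w|^{1/2}\bigr)$, so the hypothesis becomes $\tau\bigl(|w|^{1/2}(e-v)|w|^{1/2}\bigr)=0$. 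Taking real parts and using $\|v\|\le1$ (hence $\operatorname{Re}v\le I$), the operator $|w|^{1/2}(e-\operatorname{Re}v)|w|^{1/2}$ is positive with vanishing trace, so it is $0$ by faithfulness of $\tau$; this forces $(I-\operatorname{Re}v)|w|=0$, i.e.\ $\operatorname{Re}(v)e=e$, and so $\operatorname{Re}(eve)=e$ in the unital $C^*$-algebra $e\mathcal{M}e$. Writing $eve=e+id$ with $d=d^*\in e\mathcal{M}e$ and comparing $\|eve\|^2=\|e+d^2\|=1+\|d\|^2\le\|v\|^2\le1$ forces $d=0$, hence $eve=e$; together with $v=v(v^*v)=ve$ and $v^*v=e$ this gives $v=e$, whence $w=|w|\ge0$. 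Consequently $x\ast y=w\ge0$, which is the assertion. Alternatively, the identity $L^1(\mathcal{M})^+=\{w\in L^1(\mathcal{M}):\tau(w)=\|w\|_1\}$ may simply be quoted from the standard references on noncommutative $L^p$-spaces.

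I do not anticipate a serious obstacle. The heart of the argument is the one-line observation that Young's inequality and the Haar property together pin $\|x\ast y\|_1$ to its largest possible value $\tau(x\ast y)$, after which positivity is automatic. The only mild subtlety is bookkeeping: $x\ast y$ is a priori only an element of $L^1(\mathcal{M})$, not of $\mathcal{M}$, so positivity and the equality case must be interpreted inside $\widetilde{\mathcal{M}}$; this causes no real difficulty.
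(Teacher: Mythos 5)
Your proof is correct and follows essentially the same route as the paper's: the Haar identity together with the primary Young inequality squeezes $\|x\ast y\|_1$ down to $\tau(x\ast y)$, and positivity follows. The only difference is that you spell out, via the polar decomposition, the standard fact that $\tau(w)=\|w\|_1$ forces $w\geq0$ for $w\in L^1(\mathcal{M})$, a step the paper simply asserts.
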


The positivity plays a fundamental role in subfactor theory such as in the proof of the remarkable Jones index theorem \cite{Jon83}, and it is formulated as reflection positivity in subfactor planar algebras \cite{Jon99}. 
\subsection{Positivity of Comultiplication and Categorification Criterion}
The positivity of the convolution is called quantum Schur product theorem in subfactor planar algebras (See Theorem 4.1 in \cite{Liu}), which provides an analytic tool in classification of subfactor planar algebras \cite{Liu,BLJ17,LMP15}. The Schur product property on the dual of fusion rings gives an analytic criterion of unitary categorifications of fusion rings (called Schur product criterion, see Proposition 8.3 in \cite{LPW}). In this section, we will show the positivity of the comultiplication on subfactor planar algebras, which leads to a stronger criterion for unitary categorification.

Suppose $\mathscr{P}_{\bullet}$  is a subfactor planar algebra with finite index $\delta^2$, $\delta>0$. Recall that the
convolution on the 2-box spaces is defined as follows:
\begin{align*}
\begin{tikzpicture}
\draw(-0.8,0.5)node{$x\ast y=$};
\draw(0,0) --(1,0)--(1,1)--(0,1)--(0,0);
\draw(0.5,0.5) node{$x$};
\draw(2,0) --(3,0)--(3,1)--(2,1)--(2,0);
\draw(2.5,0.5) node{$y$};
\draw (2.333,1) arc (0:180:0.833);
\draw (2.333,0) arc (0:-180:0.833);
\draw(0.333,1)--(0.333,2);
\draw(2.666,1)--(2.666,2);
\draw(0.333,0)--(0.333,-1);
\draw(2.666,0)--(2.666,-1);
\end{tikzpicture}.
\end{align*}
The convolution is also called coproduct (See e.g.  page 61 in \cite{BJ00} and page 11 in \cite{JLW16}). Recall that a biprojection is a projection  with respect to both multiplication and convolution (up to a scalar). We refer to \cite{Bir94,BJ00,BJ03} for more details. The biprojection has been applied to characterize the extremizers of uncertain principles
\cite{JLW16} and quantum Young's inequality \cite{JLW19} and
will appear in 
Theorems \ref{thm:quantum reverse Young 1} and \ref{thm:encon on subfactor} in this paper.

We consider the comultiplication $\Delta$: $\mathscr{P}_{2,\pm}\rightarrow\mathscr{P}_{2,\pm}\otimes\mathscr{P}_{2,\pm}$ as the adjoint operator of $\ast$ with respect to the unnormalized faithful Morkov trace {\rm Tr}:
\begin{align}\label{eq:definition of comultiplication on subfactor}
\left\langle \Delta(z),x\otimes y\right\rangle=\left\langle z,x\ast y\right\rangle
\end{align}
for any $x,y,z\in\mathscr{P}_{2,\pm}$. 
Switching the input disc and output discs of the convolution tangle, we obtain a surface tangle for comultiplication, see \cite{Liu19} for the theory of surface tangles.
\begin{theorem}[Positivity of Comultiplication]\label{thm:positivity of comultiplication on subfactor}
Suppose $\mathscr{P}_{\bullet}$ is a subfactor planar algebra with finite index $\delta^2$, $\delta>0$. Then the comultiplication  $\Delta$: $\mathscr{P}_{2,\pm}\rightarrow\mathscr{P}_{2,\pm}\otimes\mathscr{P}_{2,\pm}$ is positive.
Moreover, if $\mathscr{P}_{\bullet}$ is irreducible then $\Delta$
is a good $\delta^{-1}$-comultiplication.
\begin{proof}
Let {\rm\textbf{E}} be the conditional expectation from $\mathscr{P}_{4,\pm}$ onto $\mathscr{P}_{2,\pm}\otimes \mathscr{P}_{2,\pm}$. From equality \eqref{eq:definition of comultiplication on subfactor}, we have
\begin{align*}
\begin{tikzpicture}
\draw(-1.3,0.5)node{$\Delta(z)= {\rm \textbf{E}}$};
\draw(0,0) --(1.25,0)--(1.25,1)--(0,1)--(0,0);
\draw(0.625,0.5) node{$z$};
\draw(0.1,1)--(-0.5,1.8);
\draw(1.15,1)--(1.75,1.8);
\draw(1.4,1.8) arc (-20:-160:0.83);
\draw(0.1,0)--(-0.5,-0.8);
\draw(1.15,0)--(1.75,-0.8);
\draw(1.4,-0.8)arc (20:160:0.83);
\draw(1.35,0)node{,};
\end{tikzpicture}
\end{align*}
which is a composition (up to a scalar) of a $^*$-homomorphism and
a conditional expectation. Thus, $\Delta$ is positive.
Now suppose $\mathscr{P}_{\bullet}$ is irreducible.
 Note that 
\begin{align*}
\left\langle x\otimes y, \Delta(I)\right\rangle&=
\left\langle x\ast y,I\right\rangle\\
&={\rm Tr}(x\ast y)=\frac{1}{\delta}{\rm Tr}(x){\rm Tr}(y).
\end{align*} 
 Since {\rm Tr} is faithful, so $\Delta(I)=\delta^{-1}I$. Therefore, $\Delta$ is a good $\delta^{-1}$-comultiplication.
\end{proof}
\end{theorem}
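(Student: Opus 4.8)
The plan is to exploit the planar-algebraic description of the convolution (coproduct) on the $2$-box spaces and its adjoint, translating the abstract positivity claim into a statement about diagrams that can be realized inside a larger box space. First I would recall that the convolution tangle, read as a surface tangle, represents a multilinear operation $\mathscr{P}_{2,\pm}\times\mathscr{P}_{2,\pm}\to\mathscr{P}_{2,\pm}$; switching the role of the two input discs and the single output disc (i.e. turning the surface tangle upside down in the appropriate sense, cf. \cite{Liu19}) yields a surface tangle representing $\Delta\colon\mathscr{P}_{2,\pm}\to\mathscr{P}_{2,\pm}\otimes\mathscr{P}_{2,\pm}$. The key structural observation is that this $\Delta$-tangle factors, up to a positive scalar, as the composition of an inclusion/rotation-type $*$-homomorphism $\mathscr{P}_{2,\pm}\hookrightarrow\mathscr{P}_{4,\pm}$ with the conditional expectation $\mathbf{E}\colon\mathscr{P}_{4,\pm}\to\mathscr{P}_{2,\pm}\otimes\mathscr{P}_{2,\pm}$ onto the tensor product of the two "outer" $2$-box subalgebras.

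Next I would verify the factorization diagrammatically: one writes $z\in\mathscr{P}_{2,\pm}$ with its four boundary strings regrouped so that the two through-strings on each side become the four strings of a $4$-box, and the two caps that were being closed in the convolution now become the "internal" pairing that the conditional expectation $\mathbf{E}$ implements; this is exactly the picture displayed in the theorem statement. Since a conditional expectation onto a von Neumann subalgebra is completely positive, and a $*$-homomorphism between $C^*$-algebras is positive, the composition is positive, hence $\Delta\ge 0$; the overall scalar coming from the diagrammatic normalization (a power of $\delta$) is positive and does not affect positivity.

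For the second assertion, assuming $\mathscr{P}_\bullet$ irreducible (so $\dim\mathscr{P}_{1,\pm}=1$ and $\mathrm{Tr}$ is the unnormalized Markov trace with $\mathrm{Tr}(I)=\delta^2$ on $\mathscr{P}_{2}$), I would compute $\Delta(I)$ by pairing: for all $x,y$,
\[
\langle x\otimes y,\Delta(I)\rangle=\langle x\ast y,I\rangle=\mathrm{Tr}(x\ast y)=\tfrac{1}{\delta}\mathrm{Tr}(x)\mathrm{Tr}(y)=\langle x\otimes y,\tfrac{1}{\delta}I\otimes I\rangle,
\]
where the identity $\mathrm{Tr}(x\ast y)=\delta^{-1}\mathrm{Tr}(x)\mathrm{Tr}(y)$ is the standard evaluation of the convolution diagram closed off into a single loop (again a consequence of irreducibility, which collapses the relevant $1$-box to a scalar). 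Faithfulness of $\mathrm{Tr}$ then forces $\Delta(I)=\delta^{-1}I$, and combined with $\Delta\ge 0$ this says $\Delta$ is a good $\delta^{-1}$-comultiplication in the sense of Definition \ref{def:good comultiplication}.

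The main obstacle I anticipate is making the factorization through $\mathscr{P}_{4,\pm}$ fully rigorous: one must (i) pin down precisely which copy of $\mathscr{P}_{2,\pm}\otimes\mathscr{P}_{2,\pm}$ sits inside $\mathscr{P}_{4,\pm}$ and check that the diagrammatic $\mathbf{E}$ is genuinely the trace-preserving conditional expectation onto it (rather than merely a positive map), and (ii) track the shading and the orientation/rotation conventions so that the map $z\mapsto(\text{regrouped }z)$ really is a unital $*$-homomorphism and not just a $*$-linear isometry. Once the diagram is correctly interpreted as $\mathbf{E}\circ(\text{$*$-homomorphism})$ up to a positive constant, positivity is immediate; so essentially all the work is in the careful bookkeeping of the surface-tangle picture, which is why invoking \cite{Liu19} for the surface-tangle formalism is the natural route.
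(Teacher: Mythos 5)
Your proposal is correct and follows essentially the same route as the paper: both factor $\Delta$ (up to a positive scalar) as the conditional expectation $\mathbf{E}\colon\mathscr{P}_{4,\pm}\to\mathscr{P}_{2,\pm}\otimes\mathscr{P}_{2,\pm}$ composed with a diagrammatic $*$-homomorphism, and both deduce $\Delta(I)=\delta^{-1}I$ in the irreducible case from $\mathrm{Tr}(x\ast y)=\delta^{-1}\mathrm{Tr}(x)\mathrm{Tr}(y)$ and faithfulness of $\mathrm{Tr}$. The bookkeeping issues you flag are exactly what the paper's displayed tangle is meant to settle, so there is no substantive difference.
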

\begin{remark}
From  Proposition \ref{prop:comultiplication positivity imply convolution positivity} $(1)$, Theorem \ref{thm:positivity of comultiplication on subfactor} implies Theorem 4.1 in \cite{Liu}.
\end{remark}
\begin{corollary}\label{cor:elementary Young inequality subfactor}
Suppose $\mathscr{P}_{\bullet}$ is an irreducible subfactor planar algebra with finite index $\delta^2$, $\delta>0$.
For any $x,y\in\mathscr{P}_{2,\pm}$, we have
\[\|x\ast y\|_1\leq \frac{\|x\|_1\|y\|_1}{\delta}.\]
\begin{proof}
It follows from Theorem \ref{thm:positivity of comultiplication on subfactor} and Remark \ref{rem:good comultiplication induce good convolution}.
\end{proof}
\end{corollary}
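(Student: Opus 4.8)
The plan is to obtain this inequality as a formal consequence of the positivity of the comultiplication proved in Theorem \ref{thm:positivity of comultiplication on subfactor}, rather than running a separate diagrammatic estimate: once one knows that on an irreducible subfactor planar algebra $\Delta$ is a good $\delta^{-1}$-comultiplication, the general duality machinery set up in Section \ref{sec:Comultiplications and convolution von Neumann algebras} produces the primary Young's inequality with constant $k=\delta^{-1}$ for free. So the proof reduces to matching the subfactor data with Definition \ref{def:comultiplication} and Remark \ref{rem:good comultiplication induce good convolution}.

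Concretely, I would proceed in three steps. First, invoke Theorem \ref{thm:positivity of comultiplication on subfactor}: since $\mathscr{P}_\bullet$ is irreducible, $\Delta\colon\mathscr{P}_{2,\pm}\to\mathscr{P}_{2,\pm}\otimes\mathscr{P}_{2,\pm}$ satisfies $\Delta\ge0$ and $\Delta(I)=\delta^{-1}I$; by the Russo--Dye theorem \cite{RD66} such a map has operator norm exactly $\delta^{-1}$, so $\Delta$ is a $\delta^{-1}$-comultiplication in the sense of Definition \ref{def:comultiplication} and in fact a good $\delta^{-1}$-comultiplication. Second, note that the coproduct $\ast$ on $\mathscr{P}_{2,\pm}$ is precisely the convolution induced by $\Delta$ via the pairing \eqref{eq:comul induce convolution}: this is the content of \eqref{eq:definition of comultiplication on subfactor}, which defines $\Delta$ as the adjoint of $\ast$ with respect to ${\rm Tr}$. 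Third, the Hölder computation carried out right before \eqref{eq:elementary Young inequality0} then applies verbatim with $k=\delta^{-1}$: for $z\in\mathscr{P}_{2,\pm}$,
\[
|\langle x\ast y,z\rangle|=|\langle x\otimes y,\Delta(z)\rangle|\le \|x\otimes y\|_1\,\|\Delta(z)\|\le \delta^{-1}\|x\|_1\|y\|_1\|z\|,
\]
and taking the supremum over $\|z\|\le1$ gives $\|x\ast y\|_1\le\delta^{-1}\|x\|_1\|y\|_1$; equivalently, one simply quotes Remark \ref{rem:good comultiplication induce good convolution}(2) with $k=\delta^{-1}$.

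Since the statement is an immediate corollary, there is no genuine obstacle in the argument; the only point that requires care is the normalization, namely confirming that the scalar in $\Delta(I)=\delta^{-1}I$ is exactly $\delta^{-1}$ and not another power of $\delta$. That is precisely what the trace identity $\langle x\otimes y,\Delta(I)\rangle={\rm Tr}(x\ast y)=\delta^{-1}{\rm Tr}(x){\rm Tr}(y)$ in the proof of Theorem \ref{thm:positivity of comultiplication on subfactor} pins down, using irreducibility of $\mathscr{P}_\bullet$ together with the fact that ${\rm Tr}$ is the unnormalized Markov trace. Alternatively, one could bypass the comultiplication entirely and establish the inequality by a direct diagrammatic/trace argument as in \cite{JLW16}, but routing it through Theorem \ref{thm:positivity of comultiplication on subfactor} is much shorter and is the route I would take.
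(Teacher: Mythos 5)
Your proposal is correct and follows exactly the paper's route: it deduces the inequality from Theorem \ref{thm:positivity of comultiplication on subfactor} (which shows $\Delta$ is a good $\delta^{-1}$-comultiplication) together with the primary Young's inequality of Remark \ref{rem:good comultiplication induce good convolution}. You merely spell out the H\"older computation and the normalization check that the paper leaves implicit.
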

Let $\mathfrak{A}$ be a fusion ring with basis $\{x_1=I,x_2,\ldots,x_n\}$ satisfying the following fusion rules: 
\[x_k x_j=\sum_{i=1}^n N_{k,j}^i x_i,\quad N_{k,j}^i\in\mathbb{N}.\]
Let $M_k=(N_{k,j}^i)_{i,j}$ be the fusion matrix of $x_k$.

\begin{definition}[See page 18 in \cite{LPW}]
We define a linear map $ \Delta_1$:  $\mathfrak{A}\rightarrow\mathfrak{A}\otimes\mathfrak{A}$ such that
\begin{align*}
\Delta_1(x_k)=\frac{1}{d(x_k)} x_k\otimes x_k,\quad \Delta_1(x_k^*)=\Delta_1(x_k)^*.
\end{align*}
Then $\Delta_1$ is a $^*$-preserving linear map.
\end{definition}

\begin{proposition}\label{prop: fusion ring categorification imply positivity}
If a fusion ring $\mathfrak{A}$ admits a unitary categorification, then the linear map $\Delta_1$
 is positive, i.e., $\Delta_1(x)\geq0$, for any  $x\geq0$, $x\in\mathfrak{A}$.
 \begin{proof}
 It follows from Theorem \ref{thm:positivity of comultiplication on subfactor} and Proposition 2.25 in \cite{LPW}.
 \end{proof}
\end{proposition}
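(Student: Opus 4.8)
The plan is to realize the fusion bialgebra of $\mathfrak{A}$ inside the fusion bialgebra of a subfactor planar algebra coming from the categorification, and then to pull back the positivity of the comultiplication established in Theorem \ref{thm:positivity of comultiplication on subfactor}. The only external input needed is Proposition 2.25 in \cite{LPW}: if a unitary fusion category $\mathcal{C}$ categorifies $\mathfrak{A}$, then $\mathcal{C}$ produces a subfactor planar algebra $\mathscr{P}_{\bullet}$ together with a unital $*$-algebra embedding $\iota\colon \mathbb{C}\mathfrak{A}\hookrightarrow\mathscr{P}_{2,+}$ (for the multiplications on both sides), compatible with the traces and intertwining the convolutions; dually, $\iota$ intertwines the comultiplications, i.e.
\[
\Delta\circ\iota=(\iota\otimes\iota)\circ\Delta_1
\]
up to the fixed normalization relating the scalar $\delta^{-1}$ of the planar algebra to the factors $1/d(x_k)$ in the definition of $\Delta_1$. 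Here positivity of an element $x\in\mathfrak{A}$ is understood in the finite dimensional $C^*$-algebra $\mathbb{C}\mathfrak{A}$.

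Granting this, the argument is short. Let $x\in\mathfrak{A}$ with $x\geq0$, so in particular $x=x^{*}$ and hence $\Delta_1(x)=\Delta_1(x)^{*}$ since $\Delta_1$ is $*$-preserving. As $\iota$ is a $*$-homomorphism, $\iota(x)\geq0$ in $\mathscr{P}_{2,+}$, and Theorem \ref{thm:positivity of comultiplication on subfactor} gives $\Delta(\iota(x))\geq0$ in $\mathscr{P}_{2,+}\otimes\mathscr{P}_{2,+}$. By the intertwining relation this says $(\iota\otimes\iota)(\Delta_1(x))\geq0$. Finally $\iota\otimes\iota$ is an injective $*$-homomorphism of finite dimensional $C^{*}$-algebras, hence isometric and spectrum-preserving on self-adjoint elements, so from $(\iota\otimes\iota)(\Delta_1(x))\geq0$ together with $\Delta_1(x)=\Delta_1(x)^{*}$ we conclude $\Delta_1(x)\geq0$. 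This establishes the positivity of $\Delta_1$.

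I expect the only point requiring genuine care to be the precise statement and normalization of Proposition 2.25 in \cite{LPW}: one must check that the embedding $\iota$ is genuinely multiplicative and $*$-preserving, so that positivity transports forward through $\iota$ and backward through $\iota\otimes\iota$; that $\iota$ intertwines $\Delta_1$ with $\Delta$ rather than with some rescaled comultiplication; and that the planar algebra produced is indeed a subfactor planar algebra, so that Theorem \ref{thm:positivity of comultiplication on subfactor} applies. Beyond the bookkeeping of these normalizations there is no analytic difficulty: the content of the proposition is exactly the positivity of $\Delta$ on $\mathscr{P}_{2,\pm}$ transported through the embedding of fusion bialgebras.
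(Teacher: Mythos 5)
Your proposal is correct and takes essentially the same route as the paper: the paper's proof is precisely the one-line combination of Theorem \ref{thm:positivity of comultiplication on subfactor} with Proposition 2.25 of \cite{LPW} (subfactorizability of the fusion bi-algebra of a unitarily categorifiable fusion ring), and your argument just spells out how positivity of $\Delta$ is transported through that realization. The normalization and intertwining checks you flag are exactly the content of the cited Proposition 2.25, so there is no gap.
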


\begin{proposition}\label{prop: positive comultiplication matrix}
The linear map $\Delta_1$  is positive if and only if for any $v\in\mathbb{C}^n$
\begin{align}\label{eq:positive comultiplication criterion}
T=\sum_{k=1}^n\frac{v^* M_k v}{\|M_k\|_{\infty}}
M_k\otimes M_k\geq 0.
\end{align}
\begin{proof}
Since every positive operator in $\mathfrak{A}$ has the form $(\sum_{i=1}^n a_i x_i)(\sum_{j=1}^n a_j x_j)^*$, $a_i\in\mathbb{C}$, so
the linear map $\Delta_1$ is positive if and only if
$\Delta_1(\sum_{i=1}^n a_i x_i)(\sum_{j=1}^n a_j x_j)^*$ is positive. Therefore,
\begin{align}\label{eq:positive comultiplication}
\Delta_1\bigg(\sum_{i=1}^n a_i x_i\bigg)\bigg(\sum_{j=1}^n a_j x_j\bigg)^*&=\Delta_1\bigg(\sum_{i=1}^n a_i x_i\bigg)\bigg(\sum_{j=1}^n\overline{a_j} x_{j^*}\bigg)\notag\\
&=\Delta_1\bigg(\sum_{i,j=1}^n a_i \overline{a_j} x_i x_{j^*}\bigg)\notag\\
&=\Delta_1\bigg(\sum_{i,j=1}^n a_i \overline{a_j} \sum_{k=1}^n N_{i,j^*}^k x_k\bigg)\notag\\
&=\sum_{k=1}^n\frac{1}{d(x_k)} \bigg(\sum_{i,j=1}^n a_i \overline{a_j} N_{i,j^*}^k\bigg)x_k\otimes x_k\notag\\
&=\sum_{k=1}^n\frac{1}{d(x_k)}\bigg (\sum_{i,j=1}^n a_i \overline{a_j} N_{k,j}^i\bigg)x_k\otimes x_k\quad \text{Frobenius reciprocity}\notag\\
&=\sum_{k=1}^n\frac{v^* M_k v}{d(x_k)} x_k\otimes x_k\geq0. 
\end{align}
Now we consider the left regular representation of $\mathfrak{A}\otimes\mathfrak{A}$ on $L^2(\mathfrak{A}\otimes\mathfrak{A})$. Then inequality (\ref{eq:positive comultiplication}) is equivalent to inequality (\ref{eq:positive comultiplication criterion}).
\end{proof}
\end{proposition}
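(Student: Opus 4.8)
The plan is to test positivity of $\Delta_1$ against the positive cone of $\mathfrak{A}$ and then convert the resulting condition into a matrix inequality by passing to the left regular representation. First I would recall that $\mathfrak{A}$, realized on $L^2(\mathfrak{A})$ through its left regular representation, is a finite-dimensional $C^*$-algebra, so every positive element is of the form $aa^*$ with $a=\sum_{i=1}^n a_ix_i$; since $\Delta_1$ is $*$-preserving, $\Delta_1\ge 0$ holds if and only if $\Delta_1(aa^*)\ge 0$ in $\mathfrak{A}\otimes\mathfrak{A}$ for every such $a$.

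Next I would compute $\Delta_1(aa^*)$ explicitly. Expanding $aa^*=\sum_{i,j}a_i\overline{a_j}\,x_ix_{j^*}=\sum_{i,j,k}a_i\overline{a_j}N_{i,j^*}^k\,x_k$ and applying $\Delta_1(x_k)=\tfrac{1}{d(x_k)}x_k\otimes x_k$ gives
\[
\Delta_1(aa^*)=\sum_{k=1}^n\frac{1}{d(x_k)}\Big(\sum_{i,j}a_i\overline{a_j}N_{i,j^*}^k\Big)\,x_k\otimes x_k .
\]
Invoking Frobenius reciprocity in the form $N_{i,j^*}^k=N_{k,j}^i$, the bracketed coefficient becomes $\sum_{i,j}a_i\overline{a_j}N_{k,j}^i$, which equals $v^*M_kv$ for $v=(a_1,\dots,a_n)^{\mathsf{T}}$, up to a complex conjugation that does not affect positivity. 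Thus $\Delta_1(aa^*)=\sum_k\tfrac{v^*M_kv}{d(x_k)}\,x_k\otimes x_k$, and as $a$ runs over $\mathfrak{A}$ the vector $v$ runs over all of $\mathbb{C}^n$.

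Finally I would pass to the left regular representation of $\mathfrak{A}\otimes\mathfrak{A}$ on $L^2(\mathfrak{A}\otimes\mathfrak{A})$, under which $x_k\otimes x_k\mapsto M_k\otimes M_k$; positivity of $\Delta_1(aa^*)$ then reads $\sum_k\tfrac{v^*M_kv}{d(x_k)}\,M_k\otimes M_k\ge 0$. To match the stated normalization I would replace $d(x_k)$ by $\|M_k\|_\infty$: a further instance of Frobenius reciprocity gives $M_{k^*}=M_k^{\mathsf{T}}$, so $M_kM_k^*=M_kM_{k^*}$ is a symmetric matrix with nonnegative entries whose Perron--Frobenius eigenvalue is $d(x_k)d(x_{k^*})=d(x_k)^2$; hence $\|M_k\|_\infty^2=\|M_kM_k^*\|_\infty=d(x_k)^2$, i.e. $d(x_k)=\|M_k\|_\infty$. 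Substituting this into the inequality settles both implications at once.

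I expect the only point requiring care to be conceptual rather than computational: making precise that \emph{positivity of $\Delta_1$} means operator-positivity inside the regularly represented $C^*$-algebras, so that the reduction to elements $aa^*$ and the final passage to $M_k\otimes M_k$ are both legitimate, and keeping the conjugations and the involution $x_j\mapsto x_{j^*}$ consistent across the two Frobenius reciprocity steps. The identity $\|M_k\|_\infty=d(x_k)$ is standard but does depend on the symmetry of $M_kM_k^*$ noted above; everything else is bookkeeping with the fusion rules.
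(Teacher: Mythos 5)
Your proposal is correct and follows essentially the same route as the paper's proof: reduce positivity to elements of the form $aa^*$, expand via the fusion rules and Frobenius reciprocity to get $\sum_k \frac{v^*M_kv}{d(x_k)}x_k\otimes x_k$, and pass to the left regular representation to replace $x_k\otimes x_k$ by $M_k\otimes M_k$. The only difference is that you explicitly justify the identification $d(x_k)=\|M_k\|_\infty$ via the Perron--Frobenius eigenvalue of $M_kM_{k^*}$, a step the paper leaves implicit.
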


\begin{theorem}[Positivity of Comultiplication Criterion]
\label{thm:positivity of comultiplication criterion}
For a fusion ring $\mathfrak{A}$, let $N_{k,j}^i$
be the fusion coefficients and
 $M_k=(N_{k,j}^i)_{i,j}$
be the fusion matrices, $1\leq i,j,k\leq n$.
If $\mathfrak{A}$ admits a unitary categorification, then for any $v\in\mathbb{C}^n$,
\begin{align}\label{eq:positive comultiplication criterion}
T=\sum_{k=1}^n\frac{v^* M_k v}{\|M_k\|_{\infty}}
M_k\otimes M_k\geq 0.
\end{align}
\begin{proof}
It follows from Propositions \ref{prop: fusion ring categorification imply positivity} and \ref{prop: positive comultiplication matrix}.
\end{proof}
\end{theorem}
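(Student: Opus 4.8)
The plan is to reduce Theorem \ref{thm:positivity of comultiplication criterion} entirely to the two preceding propositions, so the work is to make sure their hypotheses chain together. First I would invoke Proposition \ref{prop: fusion ring categorification imply positivity}: if $\mathfrak{A}$ admits a unitary categorification, then the $*$-preserving linear map $\Delta_1\colon\mathfrak{A}\to\mathfrak{A}\otimes\mathfrak{A}$ is positive. Here the crucial input behind that proposition is Theorem \ref{thm:positivity of comultiplication on subfactor}, the positivity of the comultiplication on an irreducible subfactor planar algebra, together with Proposition 2.25 in \cite{LPW}, which identifies the $2$-box algebra of the categorifying subfactor with $\mathfrak{A}$ (as a finite-dimensional $C^*$-algebra, sending the distinguished basis to the simple objects and matching the convolution with the fusion product up to the Frobenius--Perron dimension normalization $d(x_k)$). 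The only subtlety worth flagging is that $\Delta$ on the subfactor side is the $\delta^{-1}$-comultiplication while $\Delta_1$ uses the fusion-matrix spectral radius $\|M_k\|_\infty = d(x_k)$ as the normalization; one checks these agree because for an irreducible subfactor the minimal central projection $x_k$ has $\mathrm{Tr}$-weight equal to $d(x_k)$ and convolution by $x_k$ scales it accordingly, which is exactly the content already packaged in \cite{LPW}.

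Second I would apply Proposition \ref{prop: positive comultiplication matrix}, which translates positivity of $\Delta_1$ into the matrix inequality \eqref{eq:positive comultiplication criterion}. The mechanism there is: every positive element of $\mathfrak{A}$ is of the form $\bigl(\sum_i a_i x_i\bigr)\bigl(\sum_j a_j x_j\bigr)^*$, one expands using the fusion rules and Frobenius reciprocity $N_{i,j^*}^k = N_{k,j}^i$ to rewrite $\Delta_1$ of this element as $\sum_k \frac{v^*M_k v}{d(x_k)}\, x_k\otimes x_k$ where $v=(a_1,\dots,a_n)^{\mathsf T}$, and then passes to the left regular representation on $L^2(\mathfrak{A}\otimes\mathfrak{A})$, under which multiplication by $x_k\otimes x_k$ becomes $M_k\otimes M_k$ and the normalizing scalar $d(x_k)$ becomes $\|M_k\|_\infty$. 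Since $v$ ranges over all of $\mathbb{C}^n$ as $(a_i)$ does, positivity of $\Delta_1$ is equivalent to $T=\sum_{k=1}^n \frac{v^*M_k v}{\|M_k\|_\infty} M_k\otimes M_k\geq 0$ for all $v\in\mathbb{C}^n$, which is precisely \eqref{eq:positive comultiplication criterion}.

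Combining the two, the proof is a single line: unitary categorifiability $\Rightarrow$ $\Delta_1\geq 0$ (Proposition \ref{prop: fusion ring categorification imply positivity}) $\Rightarrow$ $T\geq 0$ for all $v$ (Proposition \ref{prop: positive comultiplication matrix}). I do not expect any genuine obstacle at the level of this theorem; all the analytic weight sits upstream in Theorem \ref{thm:positivity of comultiplication on subfactor}, whose proof realizes $\Delta$ as (a scalar multiple of) the composition of the $*$-homomorphism $\mathscr{P}_{2,\pm}\hookrightarrow\mathscr{P}_{4,\pm}$ given by the relevant planar tangle with the trace-preserving conditional expectation $\mathbf{E}\colon\mathscr{P}_{4,\pm}\to\mathscr{P}_{2,\pm}\otimes\mathscr{P}_{2,\pm}$, both of which are manifestly positive. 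The one place to be careful when writing the final argument is simply to state explicitly that the normalization used in $\Delta_1$ matches the one coming from the irreducible subfactor, so that the inequality transported from Proposition \ref{prop: fusion ring categorification imply positivity} is literally \eqref{eq:positive comultiplication criterion} and not a rescaled variant.
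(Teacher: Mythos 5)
Your proposal is correct and follows exactly the paper's route: Theorem \ref{thm:positivity of comultiplication criterion} is deduced by chaining Proposition \ref{prop: fusion ring categorification imply positivity} (categorifiability implies $\Delta_1\geq 0$) with Proposition \ref{prop: positive comultiplication matrix} (positivity of $\Delta_1$ is equivalent to the matrix inequality via the left regular representation), which is precisely the paper's one-line proof. Your additional remarks on the normalization $d(x_k)=\|M_k\|_\infty$ and on where the analytic content sits are accurate but belong to the upstream propositions rather than to this theorem's proof.
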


\begin{remark}For any $v_s\in\mathbb{C}^n$, $s=1,2,3$, inequality \eqref{eq:positive comultiplication criterion} implies
\begin{align*}
\left\langle T( v_2\otimes v_3),v_2\otimes v_3\right\rangle\geq0.
\end{align*}
Therefore, we obtain the non-commutative Schur product criterion (see Proposition 7.3 in \cite{LPW}):
\begin{align}\label{eq:Non Commutative Schur Product Criterion}
\sum_{k=1}^n\frac{1}{\|M_k\|}\prod_{s=1}^3 v_s^* M_k v_s\geq0.
\end{align}
Note that a matrix $M$ acting on $\mathbb{C}^n\otimes \mathbb{C}^n$ such that $\left\langle M v\otimes w,v\otimes w\right\rangle\geq0$ for any $v,w\in\mathbb{C}^n$ does not assure that $M$ is positive-semidefinite. For example \footnote{
We thank S. Palcoux for providing this example.}, let
\begin{align*}
M=\begin{pmatrix}
1&0&0&0\\0&0&1&0\\0&1&0&0\\0&0&0&1
\end{pmatrix}.
\end{align*}
Then $M$ is not positive-semidefinite since the determinant of $M$ is negative. However,
for $v=(v_1,v_2)',\ w=(w_1,w_2)'\in\mathbb{C}^2$, we have
\begin{align*}
\left\langle M v\otimes w,v\otimes w\right\rangle=|v_1|^2|w_1|^2+2Re(v_1 w_2\overline{v_2 w_1})+|v_2|^2|w_2|^2=|\left\langle v,w\right\rangle|^2\geq0.
\end{align*}
This example supports the idea that the positivity of comultiplication criterion is stronger than the non-commutative Schur product criterion.
\end{remark}

Now let $(\mathcal{A},\mathcal{B},d,\tau,\mathcal{F})$ be a fusion bi-algebra (see Definition 2.13 in \cite{LPW}), where $\mathcal{A}$ and $\mathcal{B}$ are finite dimensional  $C^*$-algebras,
$\mathcal{A}$ is commutative, and $\mathcal{F}$ is a unitary from $\mathcal{A}$ onto $\mathcal{B}$ preserving $2$-norm.
Let $\{I=x_1,x_2,\ldots,x_n\}$ be the unique $\mathbb{R}_{\geq0}$-basis of $\mathcal{B}$ such that $\mathcal{F}^{-1}(x_i)$ is a multiple of minimal projection in $\mathcal{A}$ and satisfying the fusion rules: 
\begin{align*}
x_k x_j=\sum_{i=1}^n N_{k,j}^i x_i,\quad N_{k,j}^i\geq0.
\end{align*}
 The unitary $\mathcal{F}$ induces a convolution $\ast$ of $\mathcal{A}$:
\begin{align*}
\mathcal{A}\otimes\mathcal{A}\rightarrow\mathcal{A},\quad x\otimes y\mapsto x\ast y=\mathcal{F}^{-1}(\mathcal{F}(x)\mathcal{F}(y)).
\end{align*}
We define a linear map $\Delta_2$: $\mathcal{A}\rightarrow\mathcal{A}\otimes\mathcal{A}$ as the adjoint operator of $\ast$ with respect to the trace $d$:
\begin{align}\label{eq:definition of comultiplication on fusing bialgebra}
\left\langle \Delta_2(z),x\otimes y\right\rangle=\left\langle z,x\ast y\right\rangle
\end{align}
for any $x,y,z\in\mathcal{A}$.
\begin{proposition}\label{prop:positivity comultiplication fusion bialgebra}
Let $(\mathcal{A},\mathcal{B},d,\tau,\mathcal{F})$ be a fusion bi-algebra. The linear map $\Delta_2$: $\mathcal{A}\to\mathcal{A}\otimes \mathcal{A}$
defined in equality \eqref{eq:definition of comultiplication on fusing bialgebra} is a good comultiplication.
\begin{proof}
For any $x_i$, $x_j$, $x_k$,
we have
\begin{align*}
\left\langle \mathcal{F}^{-1}(x_i)\otimes\mathcal{F}^{-1}(x_j),\Delta_2(\mathcal{F}^{-1}(x_k))\right\rangle&=\left\langle \mathcal{F}^{-1}(x_i)\ast\mathcal{F}^{-1}(x_j),\mathcal{F}^{-1}(x_k)\right\rangle\\
&=\langle\mathcal{F}^{-1}(\sum_{s=1}^n N_{i,j}^s x_s), \mathcal{F}^{-1}(x_k)\rangle\\
&=\sum_{s=1}^n N_{ij}^s d(x_s x_k^*)\\
&=N_{i,j}^k\geq0.
\end{align*}
Since $\mathcal{A}$ is commutative and 
\[ \sum_{i=1}^n d(\mathcal{F}^{-1}(x_i))\mathcal{F}^{-1}(x_i)=I,\] thus $\Delta_2\geq0$. By Proposition 2.5 in \cite{LPW}, we further have
\begin{align*}
\left\langle \mathcal{F}^{-1}(x_i)\otimes\mathcal{F}^{-1}(x_j),\Delta_2(I)\right\rangle&=\left\langle \mathcal{F}^{-1}(x_i)\ast\mathcal{F}^{-1}(x_j),I\right\rangle\\
&=d(\mathcal{F}^{-1}(x_i)\ast\mathcal{F}^{-1}(x_j))=d(\mathcal{F}^{-1}(x_i))d(\mathcal{F}^{-1}(x_j)),
\end{align*}
which implies $\Delta_2$ preserves identity. Therefore, $\Delta_2$ is a good comultiplication.
\end{proof}
 
\end{proposition}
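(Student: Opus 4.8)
The plan is to verify directly the two conditions in Definition \ref{def:good comultiplication} with $k=1$: that $\Delta_2\geq 0$ and that $\Delta_2(I)=I$. (The normalization $\|\Delta_2\|=1$ is then automatic by the Russo-Dye theorem quoted after Proposition \ref{prop:comultiplication positivity imply convolution positivity}.) Both facts come out of unwinding the definition \eqref{eq:definition of comultiplication on fusing bialgebra} of $\Delta_2$ as the adjoint of the convolution $\ast$, exactly as in the computation for $\Delta_1$ in Proposition \ref{prop: positive comultiplication matrix}.

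First I would reduce the positivity of $\Delta_2$ to a finite check. Since $\mathcal{A}$ is finite dimensional and commutative, so is $\mathcal{A}\otimes\mathcal{A}$, and by hypothesis each $\mathcal{F}^{-1}(x_i)$ is a positive scalar multiple of a minimal projection of $\mathcal{A}$; hence $\{\mathcal{F}^{-1}(x_i)\otimes\mathcal{F}^{-1}(x_j)\}_{i,j}$ is, up to positive scalars, precisely the set of minimal projections of $\mathcal{A}\otimes\mathcal{A}$. Consequently an element $T\in\mathcal{A}\otimes\mathcal{A}$ is positive if and only if $\langle T,\mathcal{F}^{-1}(x_i)\otimes\mathcal{F}^{-1}(x_j)\rangle\geq0$ for all $i,j$ (the pairing taken with respect to $d\otimes d$), and, using commutativity of $\mathcal{A}$ together with the resolution of identity $\sum_i d(\mathcal{F}^{-1}(x_i))\mathcal{F}^{-1}(x_i)=I$, every positive $z\in\mathcal{A}$ is a nonnegative linear combination of the $\mathcal{F}^{-1}(x_k)$. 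By linearity of $\Delta_2$ it then suffices to prove $\Delta_2(\mathcal{F}^{-1}(x_k))\geq0$ for each $k$.

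Next is the key computation: using \eqref{eq:definition of comultiplication on fusing bialgebra}, the definition $x\ast y=\mathcal{F}^{-1}(\mathcal{F}(x)\mathcal{F}(y))$, the fusion rule $x_ix_j=\sum_s N_{i,j}^s x_s$, and the orthogonality relations of the basis ($d(x_sx_k^*)=\delta_{s,k}$, via $\mathcal{F}$ preserving the $2$-norm), one obtains
\begin{align*}
\langle \mathcal{F}^{-1}(x_i)\otimes\mathcal{F}^{-1}(x_j),\,\Delta_2(\mathcal{F}^{-1}(x_k))\rangle
=\langle \mathcal{F}^{-1}(x_i)\ast\mathcal{F}^{-1}(x_j),\,\mathcal{F}^{-1}(x_k)\rangle
=N_{i,j}^k\geq0,
\end{align*}
which by the previous paragraph gives $\Delta_2\geq0$. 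For unitality I would pair $\Delta_2(I)$ against $\mathcal{F}^{-1}(x_i)\otimes\mathcal{F}^{-1}(x_j)$ and invoke the Haar-measure property of the convolution on a fusion bi-algebra (Proposition 2.5 in \cite{LPW}), $d(\mathcal{F}^{-1}(x_i)\ast\mathcal{F}^{-1}(x_j))=d(\mathcal{F}^{-1}(x_i))\,d(\mathcal{F}^{-1}(x_j))$, to obtain $\langle\Delta_2(I),\mathcal{F}^{-1}(x_i)\otimes\mathcal{F}^{-1}(x_j)\rangle=\langle I\otimes I,\mathcal{F}^{-1}(x_i)\otimes\mathcal{F}^{-1}(x_j)\rangle$ for all $i,j$; faithfulness of $d\otimes d$ then forces $\Delta_2(I)=I$.

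The only step that needs genuine care is the reduction in the second paragraph: one must make sure that positivity in the commutative finite dimensional algebra $\mathcal{A}\otimes\mathcal{A}$ is detected exactly by the pairings against the $\mathcal{F}^{-1}(x_i)\otimes\mathcal{F}^{-1}(x_j)$ for the trace $d\otimes d$, and that the $\mathcal{F}^{-1}(x_k)$ really exhaust (up to positive scalars) the minimal projections of $\mathcal{A}$, so that a positive $z$ decomposes as a nonnegative combination of them. This is where commutativity of $\mathcal{A}$ and the identity $\sum_i d(\mathcal{F}^{-1}(x_i))\mathcal{F}^{-1}(x_i)=I$ are used; once these are in hand the rest is a routine unwinding of definitions.
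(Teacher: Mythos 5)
Your proposal is correct and follows essentially the same route as the paper: compute $\langle \mathcal{F}^{-1}(x_i)\otimes\mathcal{F}^{-1}(x_j),\Delta_2(\mathcal{F}^{-1}(x_k))\rangle=N_{i,j}^k\geq0$, use commutativity of $\mathcal{A}$ and the resolution of identity to conclude positivity, and use the Haar-measure property (Proposition 2.5 in \cite{LPW}) for unitality. The only difference is that you spell out more explicitly why pairing against the $\mathcal{F}^{-1}(x_i)\otimes\mathcal{F}^{-1}(x_j)$ detects positivity in $\mathcal{A}\otimes\mathcal{A}$, a reduction the paper leaves implicit.
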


Quantum inequalities were studied  and applied in  \cite{Liu,BLJ17,LMP15,LPW}. In  Sections \ref{sec:Implement of Young inequality} and \ref{sec:entropy convolution inequality}, we will unify various quantum inequalities including quantum Young's inequality, quantum (smooth) entropic convolution  inequality on Frobenius von Neumann $k$-algebras.  These inequalities can be applied as criterions of unitary categorification as well.

\section{Quantum Young's inequality on Frobenius von Neumann  $k$-algebras}\label{sec:Implement of Young inequality}
In this section, we will prove the quantum Young's inequality on Frobenius von Neumann  $k$-algebras, which unifies the quantum Young's inequality on 
subfactor planar algebras \cite{JLW16} and fusion bi-algebras \cite{LPW}. 
\subsection{Frobenius von Neumann $k$-algebras}
To unify quantum inequalities on quantum symmetries, we introduce the notion of Frobenius von Neumann $k$-algebras.
\begin{definition}\label{def:antipode}
Let $(\mathcal{M},\tau)$ be a tracial von Neumann algebra with a $k$-convolution $\ast$, $k>0$. A trace-preserving anti $^*$-isomorphism $\rho$ on $\mathcal{M}$ is called an $\textbf{antipode}$ if 
the Frobenius Reciprocity holds:
\begin{align}\label{eq:Frobenius Reciprocity}
\tau((x\ast y)z)=\tau 
((\rho(z)\ast x)\rho(y)),\quad\forall\ x,y,z\in\mathcal{M}.
\end{align}
\end{definition}

\begin{proposition}\label{lem:rotation preserve pnorm}
 For any $x\in\mathcal{M}$, $0<p\leq\infty$, the antipode $\rho$ satisfies $\|\rho(x)\|_p=\|x\|_p.$
\begin{proof}
For any positive rational number $r=\frac{m}{n}$, $m,n\in\mathbb{N}_+$, we have
\begin{align*}
(|\rho(x)|^r)^n =|\rho(x)|^m=\rho(|x^*|)^m=\rho(|x^*|^m),\quad
\rho(|x^*|^r)^n =\rho(|x^*|^{rn})=\rho(|x^*|^m).
\end{align*}
Since $|\rho(x)|^r$ and $\rho(|x^*|^r)$ are positive operators,
we have $|\rho(x)|^r=\rho(|x^*|^r)$. Therefore,
\begin{align*}
\|\rho(x)\|_r=\tau(|\rho(x)|^r)^{1/r}=\tau(\rho(|x^*|^r))^{1/r}=
\tau(|x^*|^r)^{1/r}=\|x^*\|_r=\|x\|_r.
\end{align*}
Since positive rational numbers are dense in positive real numbers and $\tau$ is continuous with respect to operator norm, we have $\|\rho(x)\|_p=\|x\|_p$ for any $0<p<\infty$. Note that the operator norm $\|x\|=\lim_{n\rightarrow\infty}\tau(|x|^n)^{1/n}$, we have
$\|\rho(x)\|=\|x\|$. Therefore $\|\rho(x)\|_p=\|x\|_p$ for any $0<p\leq\infty$.
\end{proof}
\end{proposition}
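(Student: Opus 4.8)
The plan is to reduce the whole statement to the single operator identity $|\rho(x)| = \rho(|x^*|)$ and then propagate it through the functional calculus and the trace. First I would observe that, being an anti $^*$-isomorphism, $\rho$ is in particular positive and $^*$-preserving, so for every $x\in\mathcal{M}$
\[
|\rho(x)|^2=\rho(x)^*\rho(x)=\rho(x^*)\rho(x)=\rho(xx^*)=\rho(|x^*|^2)=\rho(|x^*|)^2,
\]
where the penultimate equality uses $|x^*|^2=xx^*$ and the last one uses that $\rho$ applied to a product of a single positive element is still multiplicative (the order reversal in an anti-homomorphism is invisible here). Since $|\rho(x)|$ and $\rho(|x^*|)$ are both positive operators, uniqueness of the positive square root gives $|\rho(x)|=\rho(|x^*|)$.

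Next I would bootstrap this to positive rational exponents. For $r=m/n$ with $m,n\in\mathbb{N}_+$, the operators $|\rho(x)|^r$ and $\rho(|x^*|^r)$ are positive, and raising each to the $n$-th power gives $|\rho(x)|^m=\rho(|x^*|)^m=\rho(|x^*|^m)$ and $\rho\big((|x^*|^r)^n\big)=\rho(|x^*|^m)$ respectively; uniqueness of positive $n$-th roots then forces $|\rho(x)|^r=\rho(|x^*|^r)$. Applying the trace and using that $\rho$ is trace-preserving,
\[
\|\rho(x)\|_r^r=\tau(|\rho(x)|^r)=\tau(\rho(|x^*|^r))=\tau(|x^*|^r)=\|x^*\|_r^r=\|x\|_r^r,
\]
the last equality being the standard $^*$-invariance of the $r$-norm obtained from the polar decomposition $x=u|x|$, $|x^*|=u|x|u^*$.

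To pass from rational to arbitrary $p\in(0,\infty)$, I would combine density of $\mathbb{Q}_{>0}$ in $\mathbb{R}_{>0}$ with two continuity facts: the map $t\mapsto t^p$ is continuous in $p$ uniformly on the (bounded) spectra of $|x|$ and $|x^*|$, so $|\rho(x)|^p$ and $\rho(|x^*|^p)$ are operator-norm limits of $|\rho(x)|^{r_k}$ and $\rho(|x^*|^{r_k})$ for rationals $r_k\to p$, and $\tau$ is continuous with respect to the operator norm; hence $\|\rho(x)\|_p^p=\tau(\rho(|x^*|^p))=\tau(|x^*|^p)=\|x\|_p^p$ for every finite $p$. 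For $p=\infty$ I would use $\|y\|=\lim_{n\to\infty}\tau(|y|^n)^{1/n}$ for bounded $y$ (equivalently, that $\rho$, being a $^*$-isomorphism onto $\mathcal{M}$, is automatically operator-norm isometric), which yields $\|\rho(x)\|=\lim_n\|\rho(x)\|_n=\lim_n\|x\|_n=\|x\|$.

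I expect the only point requiring genuine care to be the rational-to-real passage: one must apply the continuity of the functional calculus in the exponent on a fixed bounded interval containing the relevant spectra, and one must know that $\rho$ is norm-continuous — which is immediate once one recalls that $^*$-homomorphisms of $C^*$-algebras are contractive, so $^*$-isomorphisms onto are isometric. Everything else is a routine manipulation of the functional calculus and of the trace.
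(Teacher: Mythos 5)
Your proposal is correct and follows essentially the same route as the paper: establish $|\rho(x)|^r=\rho(|x^*|^r)$ for positive rationals $r$ via uniqueness of positive roots, apply the trace-preserving property, then pass to all $p\in(0,\infty)$ by density and to $p=\infty$ via $\|y\|=\lim_n\tau(|y|^n)^{1/n}$. The only difference is that you spell out the base identity $|\rho(x)|=\rho(|x^*|)$ (using anti-multiplicativity to get $\rho(x)^*\rho(x)=\rho(xx^*)$), a step the paper's proof uses implicitly when it writes $|\rho(x)|^m=\rho(|x^*|)^m$; this is a welcome clarification, not a different argument.
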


\begin{definition}\label{def:vNFA}
Let $(\mathcal{M},\tau)$ be a tracial von Neumann algebra with a good $k$-convolution $\ast$ and an antipode $\rho$. Then we call the quadruple $(\mathcal{M},\tau,\ast,\rho)$ a \textbf{Frobenius  von Neumann $k$-algebra} (\textbf{FN $k$-algebra}). We simply call  $(\mathcal{M},\tau,\ast,\rho)$ a \textbf{Frobenius von Neumann algebra} (\textbf{FN algebra}) when $k=1$.
\end{definition}
\begin{remark}\label{rem:vNF k algebra reduce to vNF algebra}
Let $(\mathcal{M},\tau,\ast,\rho)$ be a FN $k$-algebra. For any $\lambda_1,\lambda_2>0$, define
\[\tau_{\lambda_1}(x)=\lambda_1^{-1}\tau(x),\quad x\ast_{\lambda_2}y=\lambda_2^{-1}x\ast y,\quad\forall x,y\in\mathcal{M}.\]
Then $(\mathcal{M},\tau_{\lambda_1},\ast_{\lambda_2},\rho)$ is a FN $\lambda_1 k/\lambda_2$-algebra. In particular,  $(\mathcal{M},\tau_{\lambda_1},\ast_{\lambda_2},\rho)$ is a FN algebra  when $\lambda_1/\lambda_2=1/k$.
\end{remark}
\begin{example}\label{ex:vNF algebra from subfactor}
Let $\mathscr{P}_{2,\pm}$ be the 2-box space of an irreducible subfactor planar algebra with finite index $\delta^2$, $\delta>0$. For any $x\in\mathscr{P}_{2,\pm}$, let $\rho(x)$ be the contragredient of $x$.
Then $\rho$ is an antipode on 
$\mathscr{P}_{2,\pm}$  by Lemma 3.4 in \cite{JLW16}. The quadruple $(\mathscr{P}_{2,\pm},{\rm Tr},\ast,\rho)$ is a FN $\delta^{-1}$-algebra.
\end{example}

\begin{example}\label{ex:vNF algebra from fusion bi-algebra}
Let $(\mathcal{A},\mathcal{B},d,\tau,\mathcal{F})$
be a fusion bi-algebra. For any $x\in\mathcal{A}$,
 let $\rho(x)=J(x)^*$,  where $J=\mathcal{F}^{-1}(\mathcal{F}(x)^*)$ is an anti-linear, $\ast$-isomorphism on $\mathcal{A}$ (See Definition 2.12 in \cite{LPW}). Then $\rho$ is an antipode  on $\mathcal{A}$ by Proposition 2.21 in \cite{LPW}. 
 The quadruple $(\mathcal{A},d,\ast,\rho)$ is a FN algebra.
\end{example}

\subsection{Quantum Young's inequality }\label{subsec:Young inequality on cvna} 
In this section, we will prove the quantum Young's inequality on Frobenius von Neumann  $k$-algebras.
The interpolation theorem for bounded linear maps between two tracial von Neumann algebras would be helpful in the proof.
\begin{proposition}[Interpolation Theorem, see e.g.  \cite{Kos} ]\label{prop:Interpolation Theorem}
Let $\mathcal{M}$ and $\mathcal{N}$ be two von Neumann algebras with traces $\tau_{1}$ and $\tau_{2}$ respectively. Suppose $T:\mathcal{M}\rightarrow\mathcal{N}$ is a linear map.
If 
\[\|Tx\|_{p_1,\tau_1}\leq K_1\|x\|_{q_1,\tau_1},\quad \|Tx\|_{p_2,\tau_2}\leq K_2\|x\|_{q_2,\tau_2}, \]
then
\[\|Tx\|_{p_{\theta}}\leq K_1^{1-\theta}K_2^{\theta}\|x\|_{q_{\theta}},\]
where $\frac{1}{p_{\theta}}=\frac{1-\theta}{p_1}+\frac{\theta}{p_2}$, $\frac{1}{q_{\theta}}=\frac{1-\theta}{q_1}+\frac{\theta}{q_2}$, 
$1\leq p_1,q_1,p_2,q_2\leq\infty$, $0\leq\theta\leq1$.
\end{proposition}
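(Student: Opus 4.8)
The statement is the noncommutative Riesz--Thorin interpolation theorem, and the plan is to prove it by the Stein complex-interpolation method transported to the tracial $L^p$-spaces, exactly as one does classically via the Hadamard three-lines lemma. I would first dispose of the degenerate endpoint cases (if $p_1=p_2=\infty$ or $q_1=q_2$, or $\theta\in\{0,1\}$, there is nothing to prove on the corresponding side), so that I may assume $1\le q_\theta<\infty$ and $1\le p_\theta<\infty$. By homogeneity I normalize $\|x\|_{q_\theta}=1$, and it suffices to treat $x$ in the dense subset of \emph{simple} operators of $\mathcal{M}$ (those with finite spectrum), the general case following by a density and limiting argument using that $T$ extends boundedly to $L^{q_\theta}(\mathcal{M})$. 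Using the trace duality $L^{p_\theta}(\mathcal{N})^{*}\cong L^{p_\theta'}(\mathcal{N})$, where $1/p_\theta+1/p_\theta'=1$, the desired bound is equivalent to the bilinear estimate $|\tau_{2}((Tx)y)|\le K_1^{1-\theta}K_2^{\theta}$ for every simple $y\in\mathcal{N}$ with $\|y\|_{p_\theta'}=1$.

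The core step is to build the analytic interpolating families. Define the holomorphic exponents $\tfrac{1}{q(z)}=\tfrac{1-z}{q_1}+\tfrac{z}{q_2}$ and $\tfrac{1}{p'(z)}=\tfrac{1-z}{p_1'}+\tfrac{z}{p_2'}$ on the strip $S=\{0\le \Re z\le 1\}$, noting that $q(\theta)=q_\theta$ and $p'(\theta)=p_\theta'$. Taking polar decompositions $x=u|x|$ and $y=v|y|$, I set $f(z)=u\,|x|^{\,q_\theta/q(z)}$ and $g(z)=v\,|y|^{\,p_\theta'/p'(z)}$, so that $f(\theta)=x$ and $g(\theta)=y$. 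Because $x$ and $y$ are simple, the complex powers $|x|^{w}$ and $|y|^{w}$ are entire $\mathcal{M}$- resp.\ $\mathcal{N}$-valued functions of $w$, and hence $F(z):=\tau_{2}\big((Tf(z))\,g(z)\big)$ is analytic on the interior of $S$ and continuous and bounded on $S$. The decisive computation is on the two boundary lines: since $\Re\big(1/q(it)\big)=1/q_1$, one gets $|f(it)|=|x|^{q_\theta/q_1}$, whence $\|f(it)\|_{q_1}^{q_1}=\tau_1(|x|^{q_\theta})=\|x\|_{q_\theta}^{q_\theta}=1$, and likewise $\|g(it)\|_{p_1'}=1$. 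Hölder's inequality (Proposition \ref{prop:Holder_inequality}) together with the first hypothesis then yields $|F(it)|\le \|Tf(it)\|_{p_1}\|g(it)\|_{p_1'}\le K_1\|f(it)\|_{q_1}=K_1$, and the symmetric computation on $\Re z=1$ gives $|F(1+it)|\le K_2$.

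Applying the Hadamard three-lines lemma to the bounded analytic function $F$ on $S$ gives $|F(\theta)|\le K_1^{1-\theta}K_2^{\theta}$; since $F(\theta)=\tau_{2}((Tx)y)$, taking the supremum over admissible $y$ and then over simple $x$ closes the argument. The step I expect to be the genuine obstacle is the verification that $z\mapsto F(z)$ is truly bounded, continuous on the closed strip, and analytic on its interior in the noncommutative setting: this requires making rigorous sense of the operator-valued complex powers $|x|^{w}$ and $|y|^{w}$ and controlling their $L^{p}$-norms uniformly as $\Im z\to\pm\infty$. Restricting to simple operators is exactly what tames this, since then all the relevant quantities live in finite-dimensional subalgebras where the $L^p$-norms are mutually comparable and the power maps are manifestly entire; the remaining care goes into the endpoint exponents ($p_i$ or $q_i$ equal to $\infty$, where the corresponding complex power degenerates and the duality argument must be adjusted), which I would handle by separate but routine limiting considerations. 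For the full treatment I would simply invoke the noncommutative interpolation theory of \cite{Kos}.
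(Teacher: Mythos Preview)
The paper does not actually prove this proposition: it is stated with a reference to \cite{Kos} and used as a black box throughout Section~\ref{sec:Implement of Young inequality}. Your sketch is the standard Riesz--Thorin/Stein argument transported to tracial $L^p$-spaces (analytic families of simple operators via complex powers, duality, and the three-lines lemma), which is precisely the method underlying Kosaki's complex interpolation result cited there, so there is no discrepancy to discuss.

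One small caution: your density step (``the general case following by a density and limiting argument using that $T$ extends boundedly to $L^{q_\theta}(\mathcal{M})$'') is phrased in a way that sounds circular. What actually makes it work is that the two endpoint bounds already give you continuity of $T$ from $L^{q_1}$ (or $L^{q_2}$) into $L^{p_1}$ (resp.\ $L^{p_2}$); approximating a general $x\in\mathcal{M}$ by simple operators simultaneously in $L^{q_\theta}$ and in one of the endpoint norms, and then using lower semicontinuity of the $L^{p_\theta}$-norm along the resulting convergent sequence $Tx_n$, is what closes the loop. This is routine but worth stating correctly. Also note that the paper's statement has a typographical slip in the subscripts (the target norms should carry $\tau_2$), which you silently corrected in your argument.
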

For a FN $k$-algebra $(\mathcal{M},\tau,\ast,\rho)$, 
we shall note from the definition of good $k$-convolution (see Remark \ref{rem:good comultiplication induce good convolution}) that  the primary Young's inequality holds :
\begin{align}\label{eq:elementary Young inequality}
\|x\ast y\|_{1}\leq k\|x\|_1\|y\|_1
\end{align}
for any $x,y\in\mathcal{M}$.

\begin{lemma}\label{lem:Young inequality infty1infty}
For any $x,y\in\mathcal{M}$, 
we have 
\[\|x\ast y\|_{\infty}\leq k\|x\|_1\|y\|_{\infty}.\]
\begin{proof}
We have
\begin{align*}
\|x\ast y\|_{\infty}=\sup_{\|z\|_{1}=1}|\tau((x\ast y)z)|&=\sup_{\|z\|_{1}=1}
|\tau((\rho(z)\ast x)\rho(y))|\\
&\leq \sup_{\|z\|_{1}=1}\|\rho(z)\ast x \|_1\|\rho(y)\|_{\infty}\leq k\|x\|_1\|y\|_{\infty}.
\end{align*}
The first inequality is H\"older's inequality and the second uses primary Young's inequality and Lemma \ref{lem:rotation preserve pnorm}.
\end{proof}
\end{lemma}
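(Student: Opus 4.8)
The plan is to bound the operator norm $\|x\ast y\|_\infty$ by testing $x\ast y$ against the trace against trace-class elements. Recall the standard $L^1$--$L^\infty$ duality for semifinite von Neumann algebras: for $w\in L^1(\mathcal{M})$ one has $\|w\|_\infty=\sup\{|\tau(wz)|:z\in\mathcal{M},\ \|z\|_1\le 1\}$, where the right-hand side is possibly infinite precisely when $w\notin\mathcal{M}$. Since the primary Young's inequality \eqref{eq:elementary Young inequality} already gives $x\ast y\in L^1(\mathcal{M})$, it suffices to show this supremum is at most $k\|x\|_1\|y\|_\infty$; doing so simultaneously shows $x\ast y\in\mathcal{M}$ and gives the claimed norm estimate.

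First I would fix $z\in\mathcal{M}$ with $\|z\|_1=1$ and rewrite $\tau((x\ast y)z)$ using Frobenius reciprocity \eqref{eq:Frobenius Reciprocity}, which applies verbatim because $x,y,z\in\mathcal{M}$, obtaining $\tau((x\ast y)z)=\tau((\rho(z)\ast x)\rho(y))$. Then I would apply the two-factor H\"older inequality, Proposition \ref{prop:Holder_inequality}(1), with exponents $1$ and $\infty$ to get $|\tau((\rho(z)\ast x)\rho(y))|\le\|\rho(z)\ast x\|_1\,\|\rho(y)\|_\infty$. Next, apply the primary Young's inequality \eqref{eq:elementary Young inequality} to the first factor, $\|\rho(z)\ast x\|_1\le k\|\rho(z)\|_1\|x\|_1$, and Proposition \ref{lem:rotation preserve pnorm} to replace $\|\rho(z)\|_1$ by $\|z\|_1=1$ and $\|\rho(y)\|_\infty$ by $\|y\|_\infty$. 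Chaining these, $|\tau((x\ast y)z)|\le k\|x\|_1\|y\|_\infty$ for every such $z$, and taking the supremum over $z$ closes the argument.

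The only point requiring care is invoking the $L^1$--$L^\infty$ duality correctly, i.e., that pairing an $L^1$ element against the unit ball of $L^1$ in the trace recovers its operator norm (and in particular detects membership in $\mathcal{M}$); this is a standard fact about non-commutative $L^p$-spaces, available from the references for Proposition \ref{prop:Holder_inequality}. Everything else is a direct chain of already-established ingredients: Frobenius reciprocity, H\"older, primary Young, and the isometry property of the antipode. The same scheme, combined with the mirror estimate $\|x\ast y\|_\infty\le k\|x\|_\infty\|y\|_1$ and the interpolation theorem (Proposition \ref{prop:Interpolation Theorem}), is exactly what feeds into the full quantum Young's inequality.
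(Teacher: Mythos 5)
Your proposal is correct and follows essentially the same route as the paper: express $\|x\ast y\|_{\infty}$ via the $L^1$--$L^\infty$ duality, rewrite the pairing by Frobenius reciprocity, then apply H\"older, the primary Young's inequality, and the isometry of the antipode. Your extra care about the duality detecting membership in $\mathcal{M}$ is a sensible elaboration of a step the paper leaves implicit, but it does not change the argument.
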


\begin{lemma}\label{lem:Young inequality inftyinfty1}
For any $x,y\in\mathcal{M}$, 
we have 
\[\|x\ast y\|_{\infty}\leq k\|x\|_{\infty}\|y\|_{1}.\]
\begin{proof}
We have
\begin{align*}
\|x\ast y\|_{\infty}=\sup_{\|z\|_{1}=1}|\tau((x\ast y)z)|&=\sup_{\|z\|_{1}=1}
|\tau((\rho(z)\ast x)\rho(y))|\\
&\leq \sup_{\|z\|_{1}=1}\|\rho(z)\ast x \|_{\infty}\|\rho(y)\|_{1}\leq k\|x\|_{\infty}\|y\|_{1}.
\end{align*}
The first inequality is H\"older's inequality and the second uses Lemmas \ref{lem:Young inequality infty1infty} and \ref{lem:rotation preserve pnorm}.
\end{proof}
\end{lemma}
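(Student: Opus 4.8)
The plan is to run exactly the same duality-plus-reciprocity scheme that produced Lemma \ref{lem:Young inequality infty1infty}, but to split the H\"older estimate the other way so that the $\infty$-norm falls on the convolution factor and the $1$-norm on the remaining factor. First I would express the operator norm through its predual pairing,
\[\|x\ast y\|_{\infty}=\sup_{\|z\|_{1}=1}|\tau((x\ast y)z)|,\]
so that bounding $\|x\ast y\|_\infty$ is reduced to a uniform estimate on the trace functional $\tau((x\ast y)z)$ over the unit ball of $L^1(\mathcal{M})$. This is legitimate because $L^\infty(\mathcal{M})=\mathcal{M}$ is the dual of $L^1(\mathcal{M})$, and it converts a statement about a convolution into a statement about a single trace that we can manipulate.

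Next I would apply the Frobenius Reciprocity \eqref{eq:Frobenius Reciprocity} guaranteed by the antipode $\rho$ to rewrite
\[\tau((x\ast y)z)=\tau((\rho(z)\ast x)\rho(y)).\]
This is the structural heart of the argument: reciprocity moves the test element $z$ inside a convolution, so that the factor to be controlled is now $\rho(z)\ast x$ rather than $x\ast y$, and the roles of the three operators are permuted in just the way that lets the target norms $\|x\|_\infty$ and $\|y\|_1$ appear on the right. I would then estimate this trace by part (1) of H\"older's inequality (Proposition \ref{prop:Holder_inequality}) with the conjugate pair $(\infty,1)$, placing the operator norm on $\rho(z)\ast x$ and the $1$-norm on $\rho(y)$:
\[|\tau((\rho(z)\ast x)\rho(y))|\leq\|\rho(z)\ast x\|_{\infty}\|\rho(y)\|_{1}.\]

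Finally I would invoke the already-established endpoint estimate Lemma \ref{lem:Young inequality infty1infty}, applied to the pair $(\rho(z),x)$, to obtain $\|\rho(z)\ast x\|_{\infty}\leq k\|\rho(z)\|_{1}\|x\|_{\infty}$, and then use the isometry property of the antipode from Lemma \ref{lem:rotation preserve pnorm} to replace $\|\rho(z)\|_{1}$ by $\|z\|_{1}=1$ and $\|\rho(y)\|_{1}$ by $\|y\|_{1}$. Assembling these and taking the supremum over $\|z\|_1=1$ yields $\|x\ast y\|_{\infty}\leq k\|x\|_{\infty}\|y\|_{1}$. There is no genuine obstacle here: the only thing one must get right is the bookkeeping of which factor receives the $\infty$-norm after reciprocity, since that choice dictates whether one must appeal to the primary Young's inequality (as in the previous lemma) or to Lemma \ref{lem:Young inequality infty1infty} (as here). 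This is the mild subtlety of the proof, and it is precisely the asymmetry between the two near-identical endpoint lemmas.
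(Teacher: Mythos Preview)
Your proposal is correct and matches the paper's proof essentially line for line: dualize to the $L^1$ unit ball, apply Frobenius reciprocity to rewrite $\tau((x\ast y)z)=\tau((\rho(z)\ast x)\rho(y))$, use H\"older with the $(\infty,1)$ split, and finish with Lemma~\ref{lem:Young inequality infty1infty} together with the $p$-norm invariance of $\rho$. Your commentary on why the H\"older split must be taken the other way relative to Lemma~\ref{lem:Young inequality infty1infty} is exactly the point.
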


\begin{lemma}\label{lem:Young inequality pp1 p1p}
For any $x,y\in\mathcal{M}$, $1\leq p\leq\infty$, we have \[
\|x\ast y\|_p\leq k\|x\|_1\|y\|_p,\quad \|x\ast y\|_p\leq k\|x\|_p\|y\|_1.\]
\begin{proof}
It follows from primary Young's inequality,  Lemmas
\ref{lem:Young inequality infty1infty} and \ref{lem:Young inequality inftyinfty1}, and Proposition \ref{prop:Interpolation Theorem}.
\end{proof}
\end{lemma}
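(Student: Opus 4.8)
The plan is to prove each of the two inequalities by a single application of the interpolation theorem (Proposition \ref{prop:Interpolation Theorem}), holding one of the two arguments fixed and viewing convolution as a linear map in the other. The two estimates are symmetric, so it suffices to explain the first, $\|x\ast y\|_p\leq k\|x\|_1\|y\|_p$, and then repeat the argument with the roles of $x$ and $y$ interchanged.

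First I would fix $x\in\mathcal{M}$ and consider the linear map $T\colon\mathcal{M}\to\mathcal{M}$ defined by $Ty=x\ast y$; linearity in $y$ is immediate from the defining relation \eqref{eq:comul induce convolution} of the convolution. The two endpoint bounds are already available. The primary Young's inequality \eqref{eq:elementary Young inequality} gives $\|Ty\|_1\leq k\|x\|_1\|y\|_1$, so that $T$ is bounded from $L^1(\mathcal{M})$ to $L^1(\mathcal{M})$ with constant $K_1=k\|x\|_1$, while Lemma \ref{lem:Young inequality infty1infty} gives $\|Ty\|_{\infty}\leq k\|x\|_1\|y\|_{\infty}$, so that $T$ is bounded from $L^{\infty}(\mathcal{M})$ to $L^{\infty}(\mathcal{M})$ with constant $K_2=k\|x\|_1$.

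Next I would apply Proposition \ref{prop:Interpolation Theorem} with $\mathcal{M}=\mathcal{N}$, $\tau_1=\tau_2=\tau$, and exponents $p_1=q_1=1$, $p_2=q_2=\infty$. For $\theta\in[0,1]$ one computes $\frac{1}{p_{\theta}}=\frac{1}{q_{\theta}}=1-\theta$, so $p_{\theta}=q_{\theta}=(1-\theta)^{-1}$ sweeps out the full range $[1,\infty]$ as $\theta$ runs over $[0,1]$. Since the two endpoint constants coincide, $K_1=K_2=k\|x\|_1$, the interpolated constant $K_1^{1-\theta}K_2^{\theta}$ is simply $k\|x\|_1$, independent of $\theta$, and the conclusion reads $\|x\ast y\|_p\leq k\|x\|_1\|y\|_p$ for every $p\in[1,\infty]$. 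The second inequality follows in exactly the same way: fix $y$, set $Sx=x\ast y$, use \eqref{eq:elementary Young inequality} and Lemma \ref{lem:Young inequality inftyinfty1} as the $p=1$ and $p=\infty$ endpoints with common constant $k\|y\|_1$, and interpolate.

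The argument is essentially routine, so the only points requiring care are bookkeeping ones. I would make sure that the source and target exponents agree at each endpoint ($p_i=q_i$), since Proposition \ref{prop:Interpolation Theorem} interpolates both simultaneously, and that the common value of the two endpoint constants makes the $\theta$-dependence in $K_1^{1-\theta}K_2^{\theta}$ collapse; this coincidence is precisely what yields the clean factor $k\|x\|_1$ (respectively $k\|y\|_1$) rather than a $\theta$-dependent one. I would also note that $T$ and $S$ are defined on $\mathcal{M}$, which is dense in each $L^p(\mathcal{M})$, so the interpolation machinery applies without any extension issue. No genuine obstacle is expected; the substance of the lemma resides entirely in the endpoint estimates already established in Lemmas \ref{lem:Young inequality infty1infty} and \ref{lem:Young inequality inftyinfty1}.
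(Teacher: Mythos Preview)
Your proposal is correct and is exactly the argument the paper intends: fix one variable, use the primary Young inequality and Lemma~\ref{lem:Young inequality infty1infty} (respectively Lemma~\ref{lem:Young inequality inftyinfty1}) as the $L^1$ and $L^\infty$ endpoints, and apply Proposition~\ref{prop:Interpolation Theorem} with $p_1=q_1=1$, $p_2=q_2=\infty$. The paper's one-line proof cites precisely these ingredients, so your expansion matches it in both content and structure.
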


\begin{lemma}\label{lem:young inequality infty p q}
For any $x,y\in\mathcal{M}$, $1\leq p\leq\infty$,
$\frac{1}{p}+\frac{1}{q}=1$,
we have \[\|x\ast y\|_{\infty}\leq k\|x\|_p\|y\|_q.\]
\begin{proof}
We have
\begin{align*}
\|x\ast y\|_{\infty}=\sup_{\|z\|_1=1}|\tau((x\ast y)z)|
&=\sup_{\|z\|_1=1}|\tau((\rho(z)\ast x)\rho(y))|
\\
&\leq \sup_{\|z\|_1=1}\|\rho(z)\ast x\|_p\|\rho(y)\|_q\leq
k\|x\|_p\|y\|_q.
\end{align*}
The second inequality uses Lemma \ref{lem:Young inequality pp1 p1p}.
\end{proof}
\end{lemma}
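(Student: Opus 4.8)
The plan is to reduce the estimate of the operator norm $\|x\ast y\|_\infty$ to the already-established mixed-norm bounds of Lemma \ref{lem:Young inequality pp1 p1p} by exploiting the $L^1$--$L^\infty$ duality of the trace together with the Frobenius reciprocity \eqref{eq:Frobenius Reciprocity}. Concretely, I would start from the duality formula
\[
\|x\ast y\|_\infty=\sup_{\|z\|_1=1}|\tau((x\ast y)z)|,
\]
which is valid in the tracial von Neumann setting and was used in exactly the same way to prove Lemmas \ref{lem:Young inequality infty1infty} and \ref{lem:Young inequality inftyinfty1}. The whole point of this reformulation is that it exposes a third operator $z$ on which the antipode can act.

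Next, for each fixed $z$ with $\|z\|_1=1$, I would apply the antipode identity \eqref{eq:Frobenius Reciprocity} to rewrite $\tau((x\ast y)z)=\tau((\rho(z)\ast x)\rho(y))$. This is the crucial move: it trades the unestimable quantity $x\ast y$ for the product $(\rho(z)\ast x)\rho(y)$, in which the convolution now carries a factor $\rho(z)$ whose $1$-norm is controlled by $\|z\|_1$. I would then apply H\"older's inequality (Proposition \ref{prop:Holder_inequality}(1)) with the conjugate pair $p,q$, using $\tfrac1p+\tfrac1q=1$, to obtain
\[
|\tau((\rho(z)\ast x)\rho(y))|\le \|\rho(z)\ast x\|_p\,\|\rho(y)\|_q.
\]

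Finally I would bound the first factor by the mixed-norm Young inequality $\|\rho(z)\ast x\|_p\le k\|\rho(z)\|_1\|x\|_p$ from Lemma \ref{lem:Young inequality pp1 p1p}, and invoke Proposition \ref{lem:rotation preserve pnorm} to replace $\|\rho(z)\|_1$ by $\|z\|_1=1$ and $\|\rho(y)\|_q$ by $\|y\|_q$. Taking the supremum over $\|z\|_1=1$ then yields
\[
\|x\ast y\|_\infty\le k\|x\|_p\|y\|_q.
\]
There is no genuine analytic obstacle here, since the argument follows the same template as the two $(1,\infty)$ and $(\infty,1)$ endpoint lemmas preceding it; the main point requiring attention is the bookkeeping of exponents. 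Specifically, one must check that the H\"older pairing forces the factor $\rho(y)$ into the $L^q$ slot, which is precisely the conjugacy relation $\tfrac1p+\tfrac1q=1$ assumed in the statement, and that Lemma \ref{lem:Young inequality pp1 p1p} is invoked in the correct orientation, namely with the $L^1$ slot occupied by $\rho(z)$ rather than by $x$.
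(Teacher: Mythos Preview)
Your proposal is correct and follows exactly the same route as the paper's proof: duality $\|x\ast y\|_\infty=\sup_{\|z\|_1=1}|\tau((x\ast y)z)|$, then Frobenius reciprocity \eqref{eq:Frobenius Reciprocity}, then H\"older with the conjugate pair $(p,q)$, then Lemma \ref{lem:Young inequality pp1 p1p} together with Proposition \ref{lem:rotation preserve pnorm}. There is nothing to add or correct.
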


\begin{theorem}[Quantum Young's inequality]\label{thm:quantum Young inequality}
Let $(\mathcal{M},\tau,\ast,\rho)$ be a FN $k$-algebra. 
 For any
$x,y \in\mathcal{M}$,  $1\leq p,q,r\leq\infty$ with $1/r+1=1/p+1/q$, we have
\begin{align}\label{eq:quantum Young inequality}
\|x\ast y\|_r\leq k\|x\|_p\|y\|_q.
\end{align}
\begin{proof}
It follows from Lemma \ref{lem:Young inequality pp1 p1p}, Lemma \ref{lem:young inequality infty p q} and Proposition \ref{prop:Interpolation Theorem}.
\end{proof}
\end{theorem}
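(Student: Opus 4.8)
The plan is to deduce the general case by a two–parameter interpolation argument built on a small number of "corner" estimates of the bilinear map $(x,y)\mapsto x\ast y$. Concretely, I want to realize the region
\[
\Sigma=\{(1/p,1/q,1/r):\ 1\le p,q,r\le\infty,\ \tfrac1r+1=\tfrac1p+\tfrac1q\}
\]
as (part of) the interpolation polytope spanned by the already–established endpoint bounds, and then invoke Proposition \ref{prop:Interpolation Theorem} in each variable separately. The endpoint data I intend to use are exactly those proved above: the primary Young inequality \eqref{eq:elementary Young inequality} ($\|x\ast y\|_1\le k\|x\|_1\|y\|_1$), Lemma \ref{lem:Young inequality infty1infty} and Lemma \ref{lem:Young inequality inftyinfty1} (the two $(\infty;1,\infty)$ and $(\infty;\infty,1)$ corners), Lemma \ref{lem:Young inequality pp1 p1p} (the two edges $\|x\ast y\|_p\le k\|x\|_1\|y\|_p$ and $\|x\ast y\|_p\le k\|x\|_p\|y\|_1$ for all $1\le p\le\infty$), and Lemma \ref{lem:young inequality infty p q} (the face $\|x\ast y\|_\infty\le k\|x\|_p\|y\|_q$ with $1/p+1/q=1$). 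All constants are uniformly $k$, so the interpolated constant is again $k^{1-\theta}k^\theta=k$; I should stress this so the reader sees the bound does not degrade.

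The order of operations I would carry out: first fix $y$ and regard $T_y\colon x\mapsto x\ast y$ as a linear map on $\mathcal{M}$. From Lemma \ref{lem:Young inequality pp1 p1p} we have $\|T_y x\|_q\le k\|y\|_1\|x\|_q$, and from Lemma \ref{lem:young inequality infty p q} (applied with the roles as stated there) we have $\|T_y x\|_\infty\le k\|y\|_{q'}\|x\|_q$ where $1/q+1/q'=1$; more useful is Lemma \ref{lem:Young inequality infty1infty}, giving $\|T_y x\|_\infty\le k\|y\|_\infty\|x\|_1$. Interpolating these with Proposition \ref{prop:Interpolation Theorem} in the $x$–variable yields $\|x\ast y\|_r\le k\|x\|_p\|y\|_?$ along a segment of exponents. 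Then I fix $x$ and run the same interpolation in the $y$–variable using the symmetric pair of Lemmas to sweep out the $1/q$–direction, being careful that at each stage the norm on the \emph{other} argument that appears in the two endpoints can itself be interpolated consistently (this is the standard "bilinear interpolation" bookkeeping — Proposition \ref{prop:Interpolation Theorem} as stated is for a single linear map, so one applies it twice, once with the other variable frozen at each endpoint). A clean alternative is: prove first the two "mixed" statements $\|x\ast y\|_r\le k\|x\|_p\|y\|_1$ and $\|x\ast y\|_r\le k\|x\|_1\|y\|_q$ along the appropriate one-parameter families (these follow from Lemma \ref{lem:Young inequality pp1 p1p} directly), then interpolate between the line $\{(1/p,0,1/p)\}$–type corners and the face from Lemma \ref{lem:young inequality infty p q} to fill in the interior of $\Sigma$.

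The main obstacle I anticipate is purely organizational rather than deep: choosing endpoints so that the two successive single-variable interpolations are \emph{compatible}, i.e. so that the Lebesgue exponent appearing on the frozen variable at the first interpolation lies on the segment over which the second interpolation is performed, and verifying that every target triple $(1/p,1/q,1/r)$ with $1/r+1=1/p+1/q$ and all exponents in $[1,\infty]$ is a convex combination of the finitely many corner triples for which we have estimates. I would make this explicit by parametrizing: write $1/p=1-\alpha$, $1/q=1-\beta$ with $\alpha,\beta\in[0,1]$, so $1/r=1-\alpha-\beta\in[0,1]$ forces $\alpha+\beta\le1$; then the corners $(\alpha,\beta)\in\{(0,0),(1,0),(0,1)\}$ correspond respectively to Young in $L^1$, Lemma \ref{lem:Young inequality inftyinfty1}-type and Lemma \ref{lem:Young inequality infty1infty}-type estimates, and the simplex $\{\alpha+\beta\le1\}$ is their convex hull, so a double application of Proposition \ref{prop:Interpolation Theorem} (first along $\beta$ with $\alpha$ fixed, interpolating $(\alpha,0)$ against $(\alpha,1-\alpha)$, the latter supplied by Lemma \ref{lem:young inequality infty p q}; then along $\alpha$) reaches every point. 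Once the combinatorics is pinned down, the proof is the one-line invocation already written in the statement; I would simply expand that line into the explicit choice of $\theta$'s and exponents, noting throughout that the constant stays $k$ by Lemma \ref{lem:rotation preserve pnorm} and the uniformity of the endpoint constants.
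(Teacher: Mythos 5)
Your proposal is correct and follows essentially the same route as the paper: the paper's proof is precisely the single-variable interpolation (Proposition \ref{prop:Interpolation Theorem}) between the edge estimates of Lemma \ref{lem:Young inequality pp1 p1p} and the face estimate of Lemma \ref{lem:young inequality infty p q}, with the other argument frozen, which is the final plan you settle on (your second sweep ``along $\alpha$'' is in fact unnecessary, since for each fixed $\alpha$ the first interpolation already reaches every admissible $\beta$). The only content you add is the explicit convexity bookkeeping that the paper leaves implicit.
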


\begin{remark}We have the following statements:
\begin{enumerate}
\item Theorem \ref{thm:quantum Young inequality} implies Theorem 4.13 in \cite{JLW16}.
\item Theorem \ref{thm:quantum Young inequality}  implies Theorem 5.11 in \cite{LPW}.
\end{enumerate}
\end{remark}

\begin{theorem}
Let $(\mathcal{M},\tau_{\mathcal{M}},\ast,\rho)$ be a FN $k$-algebra. For fixed $x_i,y_i\in\mathcal{M}$,  $i=1,2,\ldots,n$,
$1\leq p,q,r\leq \infty$ with $1+1/r=1/p+1/q$, there exists  $t_0\in[0,1]$ such that
\[\bigg\|\sum_{i=1}^n x_i\ast y_i\bigg\|_r\leq k\bigg\|\sum_{j=1}^n  
e^{2\pi ijt_0}x_j\bigg\|_p\bigg\|\sum_{k=1}^n e^{-2\pi ikt_0}y_k\bigg\|_q.\]
\begin{proof}
Note that 
\[\int_{0}^1 e^{2\pi i(j-k)t}dt=\delta_{j,k},\quad j,k\in\mathbb{N}.\]
We have
\begin{align*}
\sum_{i=1}^n x_i\ast y_i&= 
\int_{0}^1\sum_{1\leq j,k\leq n}
e^{2\pi i (j-k)t}x_j\ast y_k dt\\
&=\int_{0}^1 (\sum_{j=1}^n e^{2\pi ijt} x_j)\ast (\sum_{k=1}^n e^{-2\pi ikt} y_k)dt.
\end{align*}
By Minkowski inequality, 
\begin{align*}
\bigg\|\sum_{i=1}^n x_i\ast y_i\bigg\|_r&=\bigg\| 
\int_{0}^1 (\sum_{j=1}^n e^{2\pi ijt} x_j)\ast (\sum_{k=1}^n e^{-2\pi ikt} y_k)dt\bigg\|_r\\
&\leq
\int_{0}^1\bigg\| (\sum_{j=1}^n e^{2\pi ijt} x_j)\ast (\sum_{k=1}^n e^{-2\pi ikt} y_k)\bigg\|_rdt\\
&\leq k\int_{0}^1 \bigg\|\sum_{j=1}^n e^{2\pi ijt} x_j\bigg\|_p\bigg\|\sum_{k=1}^n e^{-2\pi ikt} y_k 
\bigg\|_q dt\quad \text{quantum Young's inequality}\\
&\leq \sup_{t\in[0,1]} k\bigg\|\sum_{j=1}^n e^{2\pi ijt} x_j\bigg\|_p\bigg\|\sum_{k=1}^n e^{-2\pi ikt} y_k\bigg\|_q.
\end{align*}
Since $[0,1]$ is compact, there exists some $t_0$
to obtain the supremum.
\end{proof}
\end{theorem}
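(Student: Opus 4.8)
The plan is to realize the finite sum $\sum_{i=1}^n x_i\ast y_i$ as a single convolution averaged over the circle, then apply the already-established quantum Young's inequality (Theorem \ref{thm:quantum Young inequality}) inside the integral together with the Minkowski (integral) inequality for the $L^r$-norm. The crucial algebraic identity is the orthogonality relation $\int_0^1 e^{2\pi i(j-k)t}\,dt=\delta_{j,k}$ for $j,k\in\mathbb{N}$; inserting this and using bilinearity of $\ast$ rewrites $\sum_{i} x_i\ast y_i$ as $\int_0^1\bigl(\sum_{j}e^{2\pi ijt}x_j\bigr)\ast\bigl(\sum_{k}e^{-2\pi ikt}y_k\bigr)\,dt$. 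This is the same packaging trick used in classical Fourier-analytic proofs of such "twisted" Young-type inequalities.

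First I would record the orthogonality identity and expand $\sum_{i=1}^n x_i\ast y_i=\int_0^1\sum_{1\le j,k\le n}e^{2\pi i(j-k)t}\,x_j\ast y_k\,dt$, then collapse the double sum into the single convolution $\bigl(\sum_j e^{2\pi ijt}x_j\bigr)\ast\bigl(\sum_k e^{-2\pi ikt}y_k\bigr)$ under the integral. Next I would apply the Minkowski inequality for the $L^r(\mathcal{M})$-norm of a vector-valued integral to pull $\|\cdot\|_r$ inside: $\bigl\|\int_0^1 f(t)\,dt\bigr\|_r\le\int_0^1\|f(t)\|_r\,dt$. Then apply Theorem \ref{thm:quantum Young inequality} pointwise in $t$ with the exponent triple $(p,q,r)$ satisfying $1+1/r=1/p+1/q$, obtaining the integrand bound $k\bigl\|\sum_j e^{2\pi ijt}x_j\bigr\|_p\bigl\|\sum_k e^{-2\pi ikt}y_k\bigr\|_q$. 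Finally, bound the integral over $[0,1]$ by the supremum of the integrand; since $t\mapsto k\bigl\|\sum_j e^{2\pi ijt}x_j\bigr\|_p\bigl\|\sum_k e^{-2\pi ikt}y_k\bigr\|_q$ is continuous on the compact set $[0,1]$, the supremum is attained at some $t_0\in[0,1]$, which yields the claimed inequality.

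There is no serious obstacle here: the proof is a routine application of the already-proved Theorem \ref{thm:quantum Young inequality}. The only points requiring a word of care are (i) the legitimacy of the vector-valued integral $\int_0^1 f(t)\,dt$ in $L^r(\mathcal{M})$ and the Minkowski inequality for it — which holds because $t\mapsto f(t)$ is norm-continuous, hence Bochner integrable, so the integral converges in $L^r(\mathcal{M})$ and the triangle inequality for integrals applies — and (ii) continuity in $t$ of the map $t\mapsto\sum_j e^{2\pi ijt}x_j$ in the $L^p$-norm and likewise for the $y$-side, which is clear since it is a finite sum of continuous scalar functions times fixed vectors. Given these, compactness of $[0,1]$ gives the existence of $t_0$ and the proof is complete.
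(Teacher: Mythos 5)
Your proposal is correct and follows essentially the same route as the paper's proof: the orthogonality trick to package the sum as an averaged single convolution, Minkowski's integral inequality, the quantum Young's inequality applied pointwise in $t$, and compactness of $[0,1]$ to attain the supremum at some $t_0$. Your added remarks on Bochner integrability and norm-continuity in $t$ are sensible refinements of details the paper leaves implicit.
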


\subsection{Quantum reverse Young's inequality}\label{subsec:Reverse Young inequality}
In the end of this section, we will give two versions of  quantum reverse Young's inequality on subfactor planar algebras.
\begin{theorem}[Quantum reverse Young's inequality (1)]\label{thm:quantum reverse Young 1}
Let $\mathscr{P}_{\bullet}$ be an irreducible planar algebra with index $\delta^2$, $\delta>0$. For any positive operators $x,y\in\mathscr{P}_{2,\pm}$, $0<r,s,t\leq1$ with $1+1/r=1/s+1/t$, we have
\begin{align}\label{eq:reverse Young inequality1}
\|x\ast y\|_r\geq\delta^{1-2/r}\|x\|_s\|y\|_t.
\end{align}
\begin{proof}
Let $p=id\otimes e_1$ be the biprojection in $\mathscr{P}_{4,\pm}$. Define a map: $
\mathscr{P}_{4,\pm}\to p\mathscr{P}_{4,\pm}p,\ x\mapsto pxp $, which is a unital positive linear map when considering $p$ as the identity of the $C^*$- algebra $p\mathscr{P}_{4,\pm}p$. Note that $f(x)=x^r$, $0<r\leq1$, is an operator-concave function, we have (See e.g. Theorem 2.1 in \cite{Cho74})
\begin{align}\label{eq:majority}
px^rp\leq (pxp)^r,\quad x\geq0.
\end{align}
We have
\begin{align*}
\|x\ast y\|_r&=\|(x\ast y)^r\|_1^{1/r}\\
&=\delta\|(px\otimes y p)^r\|_1^{1/r}\\
&\geq \delta\|p x^r\otimes y^r p\|_1^{1/r}\quad\text{inequality \eqref{eq:majority}}\\
&=\delta^{1-1/r}\|x^r\ast y^r\|_1^{1/r}\\
&=\delta^{1-2/r}\|x\|_r\|y\|_r\\
&\geq \delta^{1-2/r}\|x\|_s\|y\|_t.
\end{align*}
Thus, we obtain the inequality \eqref{eq:reverse Young inequality1}.
\end{proof}
\end{theorem}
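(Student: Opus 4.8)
The plan is to realize the coproduct $x\ast y$ as a compression by the biprojection and then exploit operator concavity of $t\mapsto t^{r}$ for $0<r\le1$. Let $p=\mathrm{id}\otimes e_{1}$ be the biprojection in $\mathscr{P}_{4,\pm}$ and regard $p\mathscr{P}_{4,\pm}p$ as a $C^{*}$-algebra with unit $p$. The key structural input is that, under the trace-preserving identification of $p\mathscr{P}_{4,\pm}p$ with $\mathscr{P}_{2,\pm}$, one has $x\ast y=\delta\,p(x\otimes y)p$; diagrammatically this is just the statement that capping the two middle strings of $x\otimes y$ with a Jones projection yields the coproduct, with the Markov normalization producing the factor $\delta$. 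In particular $\|x\ast y\|_{r}=\delta\,\|(p(x\otimes y)p)^{r}\|_{1}^{1/r}$ for every $0<r\le1$.

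Next I would apply the operator Jensen inequality. For $0<r\le1$ the function $t\mapsto t^{r}$ is operator concave, so \cite{Cho74}, applied to the unital positive linear map $z\mapsto pzp$ on $p\mathscr{P}_{4,\pm}p$, gives $p\,z^{r}\,p\le(pzp)^{r}$ for all $z\ge0$. Taking $z=x\otimes y$ and using $(x\otimes y)^{r}=x^{r}\otimes y^{r}$ for positive $x,y$ yields $p(x^{r}\otimes y^{r})p\le(p(x\otimes y)p)^{r}$. Since the trace is monotone on positive operators and $t\mapsto t^{1/r}$ is increasing, this passes to
\[
\|x\ast y\|_{r}=\delta\,\|(p(x\otimes y)p)^{r}\|_{1}^{1/r}\ \ge\ \delta\,\|p(x^{r}\otimes y^{r})p\|_{1}^{1/r}=\delta^{1-1/r}\,\|x^{r}\ast y^{r}\|_{1}^{1/r},
\]
where the last equality re-uses the same identification, now applied to $x^{r}$ and $y^{r}$.

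The remaining steps are bookkeeping with the Haar measure. Since $x^{r}\ast y^{r}\ge0$ by the positivity of the coproduct (Theorem \ref{thm:positivity of comultiplication on subfactor}), the Haar-measure identity gives $\|x^{r}\ast y^{r}\|_{1}={\rm Tr}(x^{r}\ast y^{r})=\delta^{-1}{\rm Tr}(x^{r}){\rm Tr}(y^{r})=\delta^{-1}\|x\|_{r}^{r}\|y\|_{r}^{r}$, hence $\delta^{1-1/r}\|x^{r}\ast y^{r}\|_{1}^{1/r}=\delta^{1-2/r}\|x\|_{r}\|y\|_{r}$. Finally, from $1+1/r=1/s+1/t$ and $s,t\le1$ one gets $r\le s$ and $r\le t$, so the monotonicity of the non-commutative $L^{p}$-norms on $\mathscr{P}_{2,\pm}$ (the Markov trace being normalized so that every nonzero projection has trace at least one) gives $\|x\|_{r}\ge\|x\|_{s}$ and $\|y\|_{r}\ge\|y\|_{t}$, and \eqref{eq:reverse Young inequality1} follows.

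I expect the two delicate points to be the normalization constant in the identification $x\ast y=\delta\,p(x\otimes y)p$ — this is precisely where the two factors of $\delta^{-1/r}$ combine with the leading $\delta$ to produce the exponent $1-2/r$ — and the justification of the operator-concavity inequality inside the corner algebra, where $p$ (and not the ambient identity) must be treated as the unit, so that one invokes the operator Jensen/Choi inequality rather than a scalar one. Everything else is a direct application of results already in hand: positivity of the comultiplication, the Haar-measure identity, and the standard H\"older and monotonicity properties of non-commutative $L^{p}$-norms.
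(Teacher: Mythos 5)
Your proposal is correct and follows essentially the same route as the paper: the biprojection $p=\mathrm{id}\otimes e_1$, the operator Jensen/Choi inequality $pz^rp\le(pzp)^r$ in the corner algebra applied to $z=x\otimes y$, the Haar-measure identity to evaluate $\|x^r\ast y^r\|_1$, and $L^p$-norm monotonicity at the end. The only difference is that you spell out the normalization $x\ast y=\delta\,p(x\otimes y)p$ and the final monotonicity step, which the paper leaves implicit.
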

\begin{remark}
Suppose
$\|x\ast y\|_r=\delta^{1-2/r}\|x\|_s\|y\|_t$ for some $0<r<1$. Then $\|x\|_r=\|x\|_s$ and $\|y\|_r=\|x\|_s$ from the proof, which implies $x,y$ are both trace-one projections. Note that (See Proposition 4.7)
\[\mathcal{S}(x\ast y)\leq \mathcal{S}(x)\mathcal{S}(y)=1, \]
where $\mathcal{S}(x)={\rm Tr}(\mathcal{R}(x))$, {\rm Tr} is the unnormalized Markov trace,
$\mathcal{R}(x)$ is the range projection of $x$.
Thus, $\mathcal{S}(x\ast y)=1$ and  $\delta x\ast y$
is a trace-one projection. We have $\delta^{-1}=\|x\ast y\|_r>\delta^{1-2/r}=\delta^{1-2/r}\|x\|_s\|y\|_t$. So inequality \eqref{eq:reverse Young inequality1} is not sharp.
\end{remark}
\begin{conjecture}
Let $\mathscr{P}_{\bullet}$ be an irreducible planar algebra with index $\delta^2$, $\delta>0$. For any positive operators $x,y\in\mathscr{P}_{2,\pm}$, $0<r,s,t\leq1$ with $1+1/r=1/s+1/t$, the following sharp inequality holds:
\begin{align}\label{eq:sharp reverse Young ineqaulity}
\|x\ast y\|_r\geq\delta^{-1}\|x\|_s\|y\|_t.
\end{align}
\end{conjecture}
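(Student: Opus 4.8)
The plan is to strip off the degenerate cases and then reduce \eqref{eq:sharp reverse Young ineqaulity} to a one--parameter family of Rényi--type convolution entropy inequalities whose endpoint is the entropic inequality already established in this paper.

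By homogeneity of both sides we may normalise $\|x\|_s=\|y\|_t=1$, and since $\mathscr{P}_{2,\pm}$ is finite dimensional and all the quasi-norms in play are continuous it is enough to treat invertible $x,y>0$. Two endpoints are automatic: at $r=s=t=1$ both sides equal $\delta^{-1}\|x\|_1\|y\|_1$ by the Haar property (Remark \ref{rem:good comultiplication induce good convolution}); and when $x,y$ are trace-one projections, $\|x\ast y\|_r\ge\|x\ast y\|_1=\delta^{-1}=\delta^{-1}\|x\|_s\|y\|_t$, so the constant $\delta^{-1}$ is indeed attained. Write $\rho_v=v/{\rm Tr}(v)$ for the normalised density of a positive $v$ and $S_\alpha(\rho)=\frac{1}{1-\alpha}\log{\rm Tr}(\rho^\alpha)$ for the Rényi entropy. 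Using ${\rm Tr}(x\ast y)=\delta^{-1}{\rm Tr}(x){\rm Tr}(y)$ together with the identity $\log\|v\|_{1/\beta}=\log{\rm Tr}(v)+(\beta-1)S_{1/\beta}(\rho_v)$, a direct computation shows that \eqref{eq:sharp reverse Young ineqaulity} for exponents with $1/s=1+\mu,\ 1/t=1+\nu$ (so $\mu,\nu\ge0$ and $1/r=1+\mu+\nu$) is \emph{equivalent} to
\begin{align}\label{eq:prop-reduced}
(\mu+\nu)\,S_{\frac{1}{1+\mu+\nu}}(\rho_{x\ast y})\ \ge\ \mu\,S_{\frac{1}{1+\mu}}(\rho_x)+\nu\,S_{\frac{1}{1+\nu}}(\rho_y),\qquad \mu,\nu\ge0.
\end{align}
Setting $\psi_\rho(\mu)=\mu\,S_{\frac{1}{1+\mu}}(\rho)=\log\|\rho\|_{1/(1+\mu)}$, the function $\psi_\rho$ is convex on $[0,\infty)$ — it is the perspective of the convex moment function $\alpha\mapsto\log{\rm Tr}(\rho^\alpha)$ precomposed with $\mu\mapsto 1+\mu$ — and $\psi_\rho(0)=0$, hence $\psi_\rho$ is superadditive. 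Consequently \eqref{eq:prop-reduced}, and with it the whole statement, follows once one knows the pointwise Rényi domination
\begin{align}\label{eq:prop-renyi}
S_\alpha(\rho_{x\ast y})\ \ge\ \max\{\,S_\alpha(\rho_x),\,S_\alpha(\rho_y)\,\},\qquad 0<\alpha\le 1,
\end{align}
via the chain $\psi_{\rho_x}(\mu)+\psi_{\rho_y}(\nu)\le\psi_{\rho_{x\ast y}}(\mu)+\psi_{\rho_{x\ast y}}(\nu)\le\psi_{\rho_{x\ast y}}(\mu+\nu)$; and taking $\nu=0$ (or $\mu=0$) in \eqref{eq:prop-reduced} shows \eqref{eq:prop-renyi} is also necessary. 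In convolution language, \eqref{eq:prop-renyi} is exactly \eqref{eq:sharp reverse Young ineqaulity} restricted to the sub-family $(r,s,t)=(\alpha,\alpha,1)$ (and its mirror $(\alpha,1,\alpha)$), so this reduces the two-parameter conjecture to a one-parameter one.

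It remains to prove \eqref{eq:prop-renyi}. At $\alpha=1$ this is the $\theta\in\{0,1\}$ case of the quantum entropic convolution inequality (Theorem \ref{thm:entropy convolution inequality}), namely $H(\rho_{x\ast y})\ge\max\{H(\rho_x),H(\rho_y)\}$, which is available. As $\alpha\downarrow0$ one has $S_\alpha(\rho)\to\log\mathcal{S}(\rho)$, so \eqref{eq:prop-renyi} degenerates to $\mathcal{S}(x\ast y)\ge\max\{\mathcal{S}(x),\mathcal{S}(y)\}$; this is expected to follow from positivity of the convolution, since for positive $a,b$ one has $\ker(a+b)=\ker a\cap\ker b$ and $x\ast y$ is a positive combination of the positive operators $x\ast p$ over the spectral projections $p$ of $y$, together with the fact that convolving by a minimal projection does not decrease the support. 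For the intermediate range $0<\alpha<1$ the strategy is to re-run the argument behind Theorem \ref{thm:entropy convolution inequality} with von Neumann entropy replaced by $S_\alpha$: on one hand \eqref{eq:sharp reverse Young ineqaulity} is the integral of \eqref{eq:prop-renyi}, and differentiating at $r=1$ recovers the $\alpha=1$ case, suggesting a family of intermediate statements obtained by the same operator-convexity/interpolation scheme; on the other hand one can attack the only lossy step in the proof of Theorem \ref{thm:quantum reverse Young 1}, namely the estimate $p\,z^\alpha p\le (p z p)^\alpha$ for the biprojection $p={\rm id}\otimes e_1$ applied to $z=x\otimes y$, and try to replace it by a sharper comparison adapted to this special configuration (since $p$ is a biprojection and $p(x\otimes y)p$ is, up to the scalar $\delta$, the convolution $x\ast y$) that recovers the full factor $\delta^{-1}$ rather than $\delta^{1-2/r}$.

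The hard part is precisely \eqref{eq:prop-renyi} for $0<\alpha<1$. The proof of the $\alpha=1$ entropic inequality exploits concavity and subadditivity features of the von Neumann entropy that Rényi entropies do not share, so a direct transfer is unlikely to succeed, and a genuinely new operator inequality — either a strengthened form of $p\,z^\alpha p\le(pzp)^\alpha$ tailored to compressions by biprojections, or a Rényi analogue of the interpolation argument for Theorem \ref{thm:entropy convolution inequality} — will be needed. An alternative is to use compactness of $\{x,y\ge0:\|x\|_s=\|y\|_t=1\}$ in the finite-dimensional space $\mathscr{P}_{2,\pm}$, minimise $\|x\ast y\|_r$, and show the minimiser is a multiple of a coset of a biprojection (consistent with the examples), but the variational analysis is delicate because for $r,s,t<1$ the relevant functionals are not convex, so the Euler--Lagrange conditions do not by themselves isolate the extremiser.
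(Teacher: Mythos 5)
First, note that the statement you are addressing is labeled a \emph{conjecture} in the paper: the authors do not prove it, and the surrounding text only establishes the non-sharp bound $\|x\ast y\|_r\geq\delta^{1-2/r}\|x\|_s\|y\|_t$ (Theorem \ref{thm:quantum reverse Young 1}) and the different sharp variant $\|x^r\ast y^r\|_r\geq\delta^{-1}\|x\|_t^r\|y\|_s^r$ of inequality \eqref{eq:reverse Young inequality on subfactor 2}. So there is no proof in the paper to compare yours against, and your attempt must stand entirely on its own.

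Your reduction is correct and genuinely worthwhile: writing $\rho_v=v/{\rm Tr}(v)$, the identity $\log\|v\|_{1/\beta}=\log{\rm Tr}(v)+(\beta-1)S_{1/\beta}(\rho_v)$, the Haar property ${\rm Tr}(x\ast y)=\delta^{-1}{\rm Tr}(x){\rm Tr}(y)$, and the convexity (hence superadditivity, since it vanishes at $\mu=0$) of $\mu\mapsto\log\|\rho\|_{1/(1+\mu)}$ do show that the full two-parameter conjecture is equivalent to its boundary case $(r,s,t)=(\alpha,\alpha,1)$, that is, to the R\'enyi domination $S_\alpha(\rho_{x\ast y})\geq\max\{S_\alpha(\rho_x),S_\alpha(\rho_y)\}$ for all $0<\alpha\leq1$. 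But the proposal does not prove that domination for any $\alpha$ strictly between the endpoints. The case $\alpha=1$ is indeed Theorem \ref{thm:entropy convolution inequality}; the limit $\alpha\downarrow0$ is only sketched (it would follow from Theorem \ref{thm:sum set estimate} once one knows $\mathcal{R}(x\ast y)=\mathcal{R}(\mathcal{R}(x)\ast\mathcal{R}(y))$ for positive $x,y$, which itself needs an argument); and for $0<\alpha<1$ you offer two strategies without carrying either out. As you concede, that intermediate range is the entire content of the conjecture: the differentiation trick behind Theorem \ref{thm:entropy convolution inequality} only yields information at $r=1$, and the operator inequality $pz^{\alpha}p\leq(pzp)^{\alpha}$ driving Theorem \ref{thm:quantum reverse Young 1} inherently loses the factor $\delta^{2-2/r}$. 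What you have is a correct equivalent reformulation plus verification of the endpoints, not a proof; the conjecture remains open after your argument exactly where it was open before.
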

\begin{remark}Suppose inequality \eqref{eq:sharp reverse Young ineqaulity} holds.
Take $t=1$, then $s=r$. We have
\begin{align*}
\lim_{r\rightarrow0^+}\|x\ast 
y\|_r^r\geq\lim_{r\rightarrow0^+}\delta^{-r} \|x\|_r^r\|y\|_1^r.
\end{align*}
It implies the following sum set estimate:
\begin{align}\label{eq:sum set estimate}
\mathcal{S}(x\ast y)\geq \mathcal{S}(x).
\end{align}
See also Theorem 4.1 in \cite{JLW19}.
\end{remark}
To study the sharp inequality, we introduce another version of quantum reverse Young's inequality.
Let $(\mathcal{M},\tau,\ast,\rho)$ be a finite dimensional FN $k$-algebra. We set \[\lambda=\min\{\tau(e):\ e\  \text{is a projection in}\ \mathcal{M}\}.\] 

\begin{theorem}
Let  $(\mathcal{M},\tau,\ast,\rho)$ be a finite dimensional FN $k$-algebra. 
For any positive operators $x,y\in\mathcal{M}$, 
$0<r,s,t\leq1$ with $1+1/r=1/s+1/t$, we have
\begin{align}\label{eq:quantum reverse young inequality 2}
 \|x^r\ast y^r \|_r\geq \lambda^{1/r-r}k\|x\|_t^r\| y\|_s^r.
\end{align}
\begin{proof}
By Remark \ref{rem:vNF k algebra reduce to vNF algebra}, 
$(\mathcal{M},\tau_{\lambda},\ast_{\lambda},\rho)$ is also a FN $k$-algebra. Define
\[\|x\|_{\lambda,p}=\tau_{\lambda}(|x|^p)^{1/p}, \quad 0<p\leq\infty.\]
We have
\[
\|x\|_{\lambda,\infty}\leq \|x\|_{\lambda, p},\quad x\in\mathcal{M},\quad 1\leq p\leq \infty.\]
By Positivity of ``$\ast_\lambda $",
\[x^t\ast_\lambda y^s\leq (\|x\|_{\lambda,\infty}^{t-r}x^r)\ast_\lambda (\|y\|_{\lambda,\infty}^{s-r} y^r )\leq \|x\|_{\lambda,t}^{t-r}\|y\|_{\lambda,s}^{s-r} x^r\ast_\lambda y^r.\]
By Haar measure property,
\[ \|x\|_{\lambda,t}^r\| y\|_{\lambda,s}^r =k^{-1}\|x\|_{\lambda,t}^{r-t}\| y\|_{\lambda,s}^{r-s}\tau_{\lambda}(x^t\ast_\lambda y^s).\]
Therefore,
\begin{align*}
\|x\|_{\lambda,t}^r\| y\|_{\lambda,s}^r 
&\leq k^{-1}\tau_\lambda(x^r\ast_\lambda y^r)\\
&=k^{-1}\tau_\lambda((x^r\ast_\lambda y^r)^r(x^r\ast_\lambda y^r)^{1-r} )\\
&\leq k^{-1}\|(x^r\ast_\lambda y^r)^r\|_{\lambda,1}\|(x^r\ast_\lambda y^r)^{1-r}\|_{\lambda,\infty}\quad \text{H\"older's inequality}\\
&=k^{-1}\|x^r\ast_\lambda y^r\|_{\lambda,r}^r\|x^r\ast_\lambda y^r\|_{\lambda,\infty}^{1-r}\\
&\leq k^{-1}\|x^r\ast_\lambda y^r\|_{\lambda,r}^r \|x^r\ast_\lambda y^r\|_{\lambda,1/r}^{1-r}\\
&\leq k^{-r}\|x^r\ast_\lambda y^r\|_{\lambda,r}^r \|x^r\|_{\lambda,t/r}^{1-r}\|y^r\|_{\lambda,s/r}^{1-r}\quad \text{quantum Young's inequality}
\\
&=k^{-r}
\|x^r\ast_\lambda y^r\|_{\lambda,r}^r \|x\|_{\lambda,t}^{r(1-r)}\|y\|_{\lambda,s}^{r(1-r)}.
\end{align*}
Thus,
\[ 
\|x^r\ast_\lambda y^r\|_{\lambda,r}\geq k\|x\|_{\lambda,t}^r\| y\|_{\lambda,s}^r.\]
It is equivalent to 
\[ \|x^r\ast y^r \|_r\geq \lambda^{1/r-r}k\|x\|_t^r\| y\|_s^r.\]
\end{proof}
\end{theorem}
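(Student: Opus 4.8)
The plan is to reduce to a normalization of the trace in which $\|\cdot\|_\infty$ is dominated by every $\|\cdot\|_p$, $p\ge 1$, and then chain together the three structural properties of a good $k$-convolution (positivity, the Haar measure identity, primary Young) with H\"older's inequality and the quantum Young's inequality already proved in Theorem \ref{thm:quantum Young inequality}.

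First I would pass, via Remark \ref{rem:vNF k algebra reduce to vNF algebra}, to the FN $k$-algebra $(\mathcal{M},\tau_\lambda,\ast_\lambda,\rho)$ with $\tau_\lambda=\lambda^{-1}\tau$ and $\ast_\lambda=\lambda^{-1}\ast$; by the choice of $\lambda$ one has $\tau_\lambda(e)\ge 1$ for every nonzero projection $e$, hence $\|z\|_{\lambda,\infty}\le\|z\|_{\lambda,p}$ for all $1\le p\le\infty$. Note the index constraints $r\le s$ and $r\le t$ (since $1/s,1/t\ge1$) and $r/s+r/t=r+1$; these make the later H\"older and quantum Young steps legitimate. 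Next, writing $x^t=x^{t-r}x^r\le\|x\|_{\lambda,\infty}^{t-r}x^r$ and likewise for $y$, positivity of $\ast_\lambda$ gives $x^t\ast_\lambda y^s\le\|x\|_{\lambda,t}^{t-r}\|y\|_{\lambda,s}^{s-r}\,x^r\ast_\lambda y^r$; applying $\tau_\lambda$ and the Haar measure identity $\tau_\lambda(x^t\ast_\lambda y^s)=k\|x\|_{\lambda,t}^t\|y\|_{\lambda,s}^s$ yields $\|x\|_{\lambda,t}^r\|y\|_{\lambda,s}^r\le k^{-1}\tau_\lambda(x^r\ast_\lambda y^r)$.

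The core of the argument is then to split off the small exponent $r$ from $x^r\ast_\lambda y^r$: by H\"older with exponents $1$ and $\infty$, $\tau_\lambda(x^r\ast_\lambda y^r)\le\|x^r\ast_\lambda y^r\|_{\lambda,r}^r\|x^r\ast_\lambda y^r\|_{\lambda,\infty}^{1-r}$, the last factor being $\le\|x^r\ast_\lambda y^r\|_{\lambda,1/r}^{1-r}$, and the quantum Young's inequality applied to the triple $(t/r,s/r,1/r)$ bounds $\|x^r\ast_\lambda y^r\|_{\lambda,1/r}$ by $k\|x\|_{\lambda,t}^r\|y\|_{\lambda,s}^r$. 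Substituting back cancels the factor $\|x\|_{\lambda,t}^{r(1-r)}\|y\|_{\lambda,s}^{r(1-r)}$ from both sides and leaves $\|x^r\ast_\lambda y^r\|_{\lambda,r}\ge k\|x\|_{\lambda,t}^r\|y\|_{\lambda,s}^r$; unwinding the normalization with $\|z\|_{\lambda,p}=\lambda^{-1/p}\|z\|_p$ and $\ast_\lambda=\lambda^{-1}\ast$ then produces the factor $\lambda^{1/r-r}$ and gives the stated inequality.

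I do not expect a conceptual obstacle; the work is bookkeeping. The two points to be careful about are (i) checking the index identities $r\le s,t$ and $r/s+r/t=r+1$ so that H\"older and the quantum Young step are valid, and (ii) tracking how the $\lambda$-rescaling of both the trace and the convolution threads through the mixed norm $\|x^r\ast_\lambda y^r\|_{\lambda,r}$, so that the exponent of $\lambda$ in the final estimate comes out to exactly $1/r-r$. The one genuinely structural ingredient beyond H\"older is the monotonicity $\|\cdot\|_{\lambda,\infty}\le\|\cdot\|_{\lambda,p}$, which is the sole reason the renormalization step is needed.
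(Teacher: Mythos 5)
Your proposal is correct and follows essentially the same route as the paper's proof: renormalize via $(\mathcal{M},\tau_\lambda,\ast_\lambda,\rho)$ so that $\|\cdot\|_{\lambda,\infty}\leq\|\cdot\|_{\lambda,p}$, use positivity and the Haar measure identity to bound $\|x\|_{\lambda,t}^r\|y\|_{\lambda,s}^r$ by $k^{-1}\tau_\lambda(x^r\ast_\lambda y^r)$, split off the exponent $r$ by H\"older, control the $\infty$-norm by the $1/r$-norm, apply quantum Young's inequality to the triple $(t/r,s/r,1/r)$, cancel, and unwind the $\lambda$-scaling to get the factor $\lambda^{1/r-r}$. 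The index bookkeeping you flag ($r\leq s,t$ and $r/s+r/t=r+1$) is exactly what makes the paper's chain of inequalities valid, so no gap remains.
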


\begin{corollary}[Quantum reverse Young's inequality (2)]
Let $\mathscr{P}_{\bullet}$ be an irreducible planar algebra with index $\delta^2$, $\delta>0$. For any positive operators $x,y\in\mathscr{P}_{2,\pm}$, $0<r,s,t\leq1$ with $1+1/r=1/s+1/t$, we have
\begin{align}\label{eq:reverse Young inequality on subfactor 2}
\|x^r\ast y^r \|_r\geq \delta^{-1}\|x\|_t^r\| y\|_s^r.
\end{align}
\end{corollary}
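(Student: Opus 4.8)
The plan is to derive Corollary from the preceding general theorem, namely inequality \eqref{eq:quantum reverse young inequality 2}, by specializing to the subfactor case. First I would recall from Example \ref{ex:vNF algebra from subfactor} that $(\mathscr{P}_{2,\pm},{\rm Tr},\ast,\rho)$ is a FN $k$-algebra with $k=\delta^{-1}$; indeed by Theorem \ref{thm:positivity of comultiplication on subfactor} the comultiplication on an irreducible subfactor planar algebra is a good $\delta^{-1}$-comultiplication, so the induced convolution is a good $\delta^{-1}$-convolution, and $\rho$, the contragredient map, is an antipode. Thus the general inequality applies verbatim with $k=\delta^{-1}$:
\[
\|x^r\ast y^r\|_r\geq \lambda^{1/r-r}\,\delta^{-1}\,\|x\|_t^r\|y\|_s^r,
\]
where $\lambda=\min\{{\rm Tr}(e):\ e\text{ a projection in }\mathscr{P}_{2,\pm}\}$.

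The remaining step is to observe that in a subfactor planar algebra with the unnormalized Markov trace, minimal projections in $\mathscr{P}_{2,\pm}$ have trace at least $1$ — in fact the minimal central projections and minimal projections have trace given by the dimensions appearing in the principal graph, all of which are $\geq 1$ since ${\rm Tr}$ is the unnormalized Markov trace and ${\rm Tr}(\text{Jones projection})=1$ is the smallest value. Hence $\lambda\geq 1$. Since $0<r\leq 1$ we have $1/r-r\geq 0$, so $\lambda^{1/r-r}\geq 1$, and therefore
\[
\|x^r\ast y^r\|_r\geq \lambda^{1/r-r}\,\delta^{-1}\,\|x\|_t^r\|y\|_s^r\geq \delta^{-1}\|x\|_t^r\|y\|_s^r,
\]
which is exactly \eqref{eq:reverse Young inequality on subfactor 2}.

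I expect the only genuine point requiring care is the claim $\lambda\geq 1$, i.e.\ that with the unnormalized Markov trace every projection in $\mathscr{P}_{2,\pm}$ has trace at least one; this follows from the standard fact that ${\rm Tr}$ restricted to $\mathscr{P}_{2,\pm}\cong \bigoplus_i M_{n_i}(\mathbb{C})$ assigns to a minimal projection in the $i$-th block the value $d_i$, where $d_i\geq 1$ is a loop parameter on the principal graph, with the minimum value $1$ attained by the Jones projection $e_1$. Everything else is a direct substitution into the already-proven Theorem. One could alternatively phrase the corollary's proof as: apply \eqref{eq:quantum reverse young inequality 2} to $(\mathscr{P}_{2,\pm},{\rm Tr},\ast,\rho)$ and use $\lambda\geq 1$ together with $1/r\geq r$.
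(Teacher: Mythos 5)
Your proof is correct and follows exactly the route the paper intends: the corollary is the specialization of the preceding theorem to the FN $\delta^{-1}$-algebra $(\mathscr{P}_{2,\pm},{\rm Tr},\ast,\rho)$, combined with the observation that $\lambda\geq 1$ (indeed $\lambda=1$, attained at the Jones projection, since minimal projections have trace equal to the quantum dimension of the corresponding bimodule) and $1/r-r\geq 0$. No gaps.
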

\begin{remark}
Take $x=y=e_1$, the Jones projection. Then $\|x^r\ast y^r \|_r=\delta^{-1}\|x\|_t^r\| y\|_s^r$. So inequality \eqref{eq:reverse Young inequality on subfactor 2} is sharp.
\end{remark}
\begin{theorem}[Sum set estimate]\label{thm:sum set estimate}
Let $(\mathcal{M},\tau,\ast,\rho)$ be a  FN $k$-algebra.  For any $x,y\in\mathcal{M}$, we have
\[\mathcal{S}(\mathcal{R}(x)\ast\mathcal{R}(y))\geq\max\{\mathcal{S}(x),\mathcal{S}(y)\}.\]
\begin{proof}
We have
\begin{align*}
\mathcal{S}(x)\mathcal{S}(y)&=\|\mathcal{R}(x)\|_1\|\mathcal{R}(y)\|_1\\
&=k^{-1}\|\mathcal{R}(x)\ast\mathcal{R}(y)\|_1\quad \text{Haar measure}\\
&\leq k^{-1}\|\mathcal{R}(\mathcal{R}(x)\ast\mathcal{R}(y))\|_2\|\mathcal{R}(x)\ast\mathcal{R}(y)\|_2\quad \text{H\"older's inequality}\\
&\leq\mathcal{S}(\mathcal{R}(x)\ast\mathcal{R}(y))^{1/2}\|\mathcal{R}(x)\|_1\|\mathcal{R}(y)\|_2\quad \text{quantum Young's inequality}\\
&=\mathcal{S}(\mathcal{R}(x)\ast\mathcal{R}(y))^{1/2}\mathcal{S}(x)\mathcal{S}(y)^{1/2}.
\end{align*}
Therefore, $\mathcal{S}(\mathcal{R}(x)\ast\mathcal{R}(y))\geq \mathcal{S}(y)$. Similarly, $\mathcal{S}(\mathcal{R}(x)\ast\mathcal{R}(y))\geq \mathcal{S}(x)$.
\end{proof}
\end{theorem}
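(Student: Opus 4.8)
The plan is to reduce the statement to the two range projections $p=\mathcal{R}(x)$ and $q=\mathcal{R}(y)$, using that $\mathcal{S}(x)=\tau(p)=\|p\|_1$ and $\mathcal{S}(y)=\tau(q)=\|q\|_1$, and then to run the familiar ``area'' argument: control $\|p\ast q\|_1$ exactly via the Haar measure property, and bound it from above by splitting through the support projection of $p\ast q$ with H\"older's inequality, the remaining factor being handled by quantum Young's inequality.

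First I would note that, since the good $k$-convolution is positive (Remark \ref{rem:good comultiplication induce good convolution}), $z:=p\ast q\geq 0$, so the Haar measure property gives
\[
\|z\|_1=\tau(z)=k\,\tau(p)\,\tau(q)=k\,\mathcal{S}(x)\,\mathcal{S}(y).
\]
Writing $e=\mathcal{R}(z)$ for the range projection of $z$, one has $ez=z$, so H\"older's inequality (Proposition \ref{prop:Holder_inequality}(1) with exponents $2,2$) yields $\|z\|_1=\tau(ez)\leq\|e\|_2\|z\|_2=\mathcal{S}(z)^{1/2}\|z\|_2$, using $\|e\|_2=\tau(e)^{1/2}=\mathcal{S}(z)^{1/2}$.

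The remaining step is to bound $\|z\|_2=\|p\ast q\|_2$ by quantum Young's inequality (Theorem \ref{thm:quantum Young inequality}) applied with the H\"older triple $(p,q,r)=(1,2,2)$, which satisfies $1+1/2=1/1+1/2$; this gives $\|p\ast q\|_2\leq k\,\|p\|_1\,\|q\|_2=k\,\mathcal{S}(x)\,\mathcal{S}(y)^{1/2}$, using $\|q\|_2=\tau(q)^{1/2}=\mathcal{S}(y)^{1/2}$. Chaining the three estimates,
\[
k\,\mathcal{S}(x)\,\mathcal{S}(y)=\|z\|_1\leq \mathcal{S}(z)^{1/2}\cdot k\,\mathcal{S}(x)\,\mathcal{S}(y)^{1/2},
\]
and cancelling $k\,\mathcal{S}(x)\,\mathcal{S}(y)^{1/2}$ gives $\mathcal{S}(\mathcal{R}(x)\ast\mathcal{R}(y))\geq\mathcal{S}(y)$. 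Running the identical computation with the exponents swapped, i.e. with the triple $(p,q,r)=(2,1,2)$ in quantum Young's inequality (so $\|p\ast q\|_2\leq k\|p\|_2\|q\|_1$), produces $\mathcal{S}(\mathcal{R}(x)\ast\mathcal{R}(y))\geq\mathcal{S}(x)$, and taking the maximum finishes the proof.

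I do not anticipate a genuine obstacle: the ingredients — positivity of the convolution, the Haar measure identity, H\"older, and quantum Young's inequality — are all in hand, and the substantive input (quantum Young) was already established in Section \ref{sec:Implement of Young inequality}, so what remains is essentially bookkeeping. The one point deserving a word of care is the degenerate case $x=0$ or $y=0$, where the cancellation step is not available; the statement is therefore to be read for nonzero $x,y$ (equivalently $\mathcal{S}(x),\mathcal{S}(y)>0$), which is the only case in which it carries content.
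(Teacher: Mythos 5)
Your proof is correct and follows essentially the same route as the paper's: the Haar measure identity for $\|\mathcal{R}(x)\ast\mathcal{R}(y)\|_1$, H\"older's inequality through the range projection of the convolution, and quantum Young's inequality with exponents $(1,2,2)$ (resp.\ $(2,1,2)$), followed by cancellation. Your closing remark about the degenerate case $x=0$ or $y=0$ is a fair point that the paper leaves implicit.
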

\begin{remark}We have that
\begin{enumerate}
\item Theorem \ref{thm:sum set estimate} implies Theorem 4.1 in \cite{JLW19};
\item  Theorem \ref{thm:sum set estimate} implies Theorem 5.19 in \cite{LPW}.
\end{enumerate}
\end{remark}

\section{Quantum Entropic convolution Inequality}\label{sec:entropy convolution inequality}
In this section, we will  prove the quantum entropic convolution inequality on Frobenius von Neumann  $k$-algebras and  characterize the extremiziers of the qECI on subfactor planar algebras. Moreover, we 
introduce the notion of (smooth) convolution entropy and  establish the quantum smooth entropic convolution inequality. 
\subsection{Quantum Entropic Convolution Inequality}\label{subsec:entropy convolution inequality}
\begin{theorem}[Quantum entropic convolution inequality]\label{thm:entropy convolution inequality}
Let $(\mathcal{M},\tau,\ast,\rho)$ be a  FN $k$-algebra. 
For any positive operators $x,y\in\mathcal{M}$ with $\|x\|_1=\|y\|_1=k^{-1}$, $0\leq \theta\leq1$,
we have
\begin{align}\label{eq:qECI}
H(x\ast y)\geq\theta H(x)+(1-\theta)H(y),
\end{align}
 where $H(x)=\tau(-x\log x)$ is the von Neumann entropy.
\begin{proof}
Let $0\leq\theta\leq1$.
For any $r\geq1$, define
\[p=\frac{r}{1-\theta+\theta r},\quad q= \frac{r}{(1-\theta)r+\theta}.\]
Then $1+1/r=1/p+1/q$. Define
\[f(r)=\|x\ast y\|_r-k\|x\|_p\|y\|_q. \]
Then $f(1)=\|x\ast y\|_1-k\|x\|_1\|y\|_1=0$ by Haar measure property and $f(r)\leq0$ by quantum  Young's inequality \eqref{eq:quantum Young inequality}. So $f'(1)\leq0$. 
Since
\begin{align*}
\frac{d}{dr}\tau(x^r)=\tau(x^r\log x),
\end{align*}
we have
\begin{align*}
\frac{d}{dr}\|x\|_r\bigg|_{r=1}=\frac{d}{dr}\exp(\frac{1}{r}\log \tau(x^r))\bigg|_{r=1}=-H(x)-\tau(x)\log \tau(x).
\end{align*}
Therefore,
\begin{align*}
f'(1)=&\frac{d}{dr}\|x\ast y\|_r \bigg|_{r=1}-k \frac{d}{dr}\|x\|_p\bigg|_{r=1}\|y\|_1-k\|x\|_1\frac{d}{dr}\|y\|_q \bigg|_{r=1}\leq0,
\end{align*}
which implies
\[H(x\ast y)\geq (1-\theta)H(x)+\theta H(y).\]
Take $\theta=0,1$ respectively. We could obtain the inequality \eqref{eq:qECI}.
\end{proof}
\end{theorem}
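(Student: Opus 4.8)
The plan is to derive the quantum entropic convolution inequality \eqref{eq:qECI} by differentiating the quantum Young's inequality \eqref{eq:quantum Young inequality} at the endpoint $r=1$, where it becomes an equality thanks to the Haar measure property. First I would fix $\theta\in[0,1]$ and, for $r\geq 1$, introduce the exponents $p=p(r)=\frac{r}{1-\theta+\theta r}$ and $q=q(r)=\frac{r}{(1-\theta)r+\theta}$, checking that $1+1/r=1/p+1/q$ holds identically in $r$ and that $p(1)=q(1)=1$. Then I would define $f(r)=\|x\ast y\|_r-k\|x\|_{p(r)}\|y\|_{q(r)}$, observe $f(1)=\|x\ast y\|_1-k\|x\|_1\|y\|_1=k^{-1}-k\cdot k^{-1}\cdot k^{-1}=0$ using $\|x\|_1=\|y\|_1=k^{-1}$ and the Haar measure identity $\tau(x\ast y)=k\tau(x)\tau(y)$ together with positivity (so that $\|x\ast y\|_1=\tau(x\ast y)$), and note $f(r)\leq 0$ for $r\geq 1$ by Theorem \ref{thm:quantum Young inequality}. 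Since $f$ attains its maximum at the left endpoint of $[1,\infty)$, the one-sided derivative satisfies $f'(1)\leq 0$.

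The core computation is the derivative of the $L^r$-norm at $r=1$. Using $\frac{d}{dr}\tau(x^r)=\tau(x^r\log x)$ for a positive operator $x$ (which follows from the spectral calculus, since $t^r\log t$ is $\tau$-measurable for positive real part of the exponent as noted in the preliminaries), I would write $\|x\|_r=\exp\!\big(\tfrac1r\log\tau(x^r)\big)$ and compute
\begin{align*}
\frac{d}{dr}\|x\|_r\Big|_{r=1}=\Big(-\tfrac{1}{r^2}\log\tau(x^r)+\tfrac{1}{r}\cdot\tfrac{\tau(x^r\log x)}{\tau(x^r)}\Big)\|x\|_r\Big|_{r=1}=-\log\tau(x)+\frac{\tau(x\log x)}{\tau(x)}.
\end{align*}
Since $\|x\|_1=\tau(x)=k^{-1}$, this equals $-H(x)-\tau(x)\log\tau(x)$ in the normalization of the statement; I would record the analogous formulas for $y$ and for $x\ast y$ (the latter being positive of trace-norm $k^{-1}$ as well). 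Then, applying the chain rule to $r\mapsto\|x\|_{p(r)}$ and using $p'(1)=1-\theta$, $q'(1)=\theta$ (computed by differentiating the explicit formulas), the inequality $f'(1)\leq 0$ unwinds to $-H(x\ast y)+c\ \geq\ (1-\theta)\big(-H(x)+c\big)+\theta\big(-H(y)+c\big)$ for the common constant $c$ coming from the $\log\tau(\cdot)=\log k^{-1}$ terms, and the constants cancel because $(1-\theta)+\theta=1$, leaving $H(x\ast y)\geq(1-\theta)H(x)+\theta H(y)$. Finally, since $\theta\in[0,1]$ was arbitrary, I may as well relabel $\theta\leftrightarrow 1-\theta$ to match the form \eqref{eq:qECI}; in particular taking $\theta=0$ and $\theta=1$ already gives the two extreme cases and convexity in $\theta$ of the right-hand side gives everything in between, though here the differentiation argument directly yields all $\theta$.

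I expect the main obstacle to be the justification of differentiating $r\mapsto\|x\ast y\|_r$ and $r\mapsto\|x\|_{p(r)}$ at $r=1$: one must know these are genuinely differentiable (not merely that $f$ has the right one-sided bound), that $\tau(x^r\log x)$ is finite and continuous near $r=1$, and that the operators involved have finite trace so the normalizing $\log\tau(\cdot)$ terms make sense. For $x,y\in\mathcal{M}$ positive with finite trace these regularity facts follow from the spectral theorem and dominated convergence for the trace, since on a bounded operator the function $r\mapsto t^r$ and its $r$-derivative $t^r\log t$ are uniformly controlled on the (bounded) spectrum; I would cite the standard non-commutative $L^p$ machinery (e.g.\ \cite{Ter81,Ter82,Xu}) for this. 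A secondary point worth stating carefully is that $\|x\ast y\|_1=\tau(x\ast y)$, which requires $x\ast y\geq 0$ — guaranteed by the Positivity property of the good $k$-convolution (Remark \ref{rem:good comultiplication induce good convolution}) — and this is exactly what forces $f(1)=0$ rather than merely $f(1)\leq 0$, which is indispensable for the endpoint-derivative argument.
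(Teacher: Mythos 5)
Your proposal is correct and follows essentially the same route as the paper: fixing $\theta$, parametrizing $p(r),q(r)$ so that Young's inequality gives $f(r)=\|x\ast y\|_r-k\|x\|_{p(r)}\|y\|_{q(r)}\leq 0$ with $f(1)=0$ by the Haar measure property, and differentiating at $r=1$ to extract the entropies. You in fact supply several details the paper leaves implicit (the values $p'(1)=1-\theta$, $q'(1)=\theta$, the cancellation of the $\log\tau(\cdot)$ terms, and the role of positivity in ensuring $\|x\ast y\|_1=\tau(x\ast y)$), so no further changes are needed.
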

Note that a good $k$-convolution $\ast$ may not satisfy associative law.
\begin{corollary} For any  positive operators $x_i\in\mathcal{M}$ with $\|x_i\|_1=k^{-1}$, $0\leq\lambda_i\leq1$, $i=1,2,3,$
 we have
\begin{align*}
H((x_1\ast x_2)\ast x_3)\geq \sum_{i=1}^3\lambda_i H(x_i)
\end{align*}
and
\begin{align*}
H(x_1\ast (x_2\ast x_3))\geq \sum_{i=1}^3\lambda_i H(x_i)
\end{align*}
For any $n\geq4$, we also have the same argument.
\end{corollary}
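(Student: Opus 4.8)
The plan is to reduce the iterated case to the two-variable quantum entropic convolution inequality (Theorem \ref{thm:entropy convolution inequality}) applied twice, taking care that the intermediate operators have the correct $1$-norm. First I would observe that for positive $x,y\in\mathcal{M}$ with $\|x\|_1=\|y\|_1=k^{-1}$, the Haar measure property gives $\tau(x\ast y)=k\tau(x)\tau(y)=k\cdot k^{-1}\cdot k^{-1}=k^{-1}$, and since $x\ast y\geq0$ by positivity, we get $\|x\ast y\|_1=\tau(x\ast y)=k^{-1}$. Thus the set of positive operators with $1$-norm equal to $k^{-1}$ is closed under the good $k$-convolution, which is exactly what makes the induction run.

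Next, for the three-variable left-associated case, set $w=x_1\ast x_2$; then $w\geq0$ and $\|w\|_1=k^{-1}$ by the observation above. Applying Theorem \ref{thm:entropy convolution inequality} to $w$ and $x_3$ with parameter $\theta=\lambda_3\in[0,1]$ gives
\begin{align*}
H((x_1\ast x_2)\ast x_3)=H(w\ast x_3)\geq (1-\lambda_3)H(w)+\lambda_3 H(x_3).
\end{align*}
Now apply Theorem \ref{thm:entropy convolution inequality} again to $x_1$ and $x_2$: there is a parameter $\mu\in[0,1]$ (to be chosen) with $H(w)=H(x_1\ast x_2)\geq \mu H(x_1)+(1-\mu)H(x_2)$, so that
\begin{align*}
H((x_1\ast x_2)\ast x_3)\geq (1-\lambda_3)\mu H(x_1)+(1-\lambda_3)(1-\mu)H(x_2)+\lambda_3 H(x_3).
\end{align*}
Choosing $\mu$ so that $(1-\lambda_3)\mu=\lambda_1$ and hence $(1-\lambda_3)(1-\mu)=\lambda_2$ yields the claim, provided such a $\mu\in[0,1]$ exists. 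The right-associated case $x_1\ast(x_2\ast x_3)$ is symmetric: set $v=x_2\ast x_3$, apply the theorem to $x_1,v$ with parameter $1-\lambda_1$, then to $x_2,v$ internally. For general $n\geq4$ one iterates this construction along the chosen parenthesization, at each stage using that convolving two $1$-norm-$k^{-1}$ positive operators again produces a $1$-norm-$k^{-1}$ positive operator.

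The one genuine subtlety—the \emph{main obstacle}—is the range of the convex weights. The statement as written allows $\lambda_i\in[0,1]$ independently, but the two-step argument only produces weights of the form $(1-\lambda_3)\mu,(1-\lambda_3)(1-\mu),\lambda_3$, which always \emph{sum to $1$}; more generally the $n$-fold iteration yields a convex combination. So one should either read the corollary with the implicit normalization $\sum_i\lambda_i=1$ (the natural reading, matching Theorem \ref{thm:entropy convolution inequality} where $\theta+(1-\theta)=1$), or, if the $\lambda_i$ are truly arbitrary in $[0,1]$, invoke nonnegativity of entropy: $H(x_i)=\tau(-x_i\log x_i)\geq 0$ for $x_i\geq 0$ with $\|x_i\|_1=k^{-1}\leq 1$ forces each eigenvalue into $[0,1]$, so one may freely decrease any weight, and in particular replace arbitrary $\lambda_i\in[0,1]$ by a dominated convex combination. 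Either way the inequality follows; I would state the normalization explicitly to avoid ambiguity. The remaining steps are entirely routine given Theorem \ref{thm:entropy convolution inequality} and the Haar/positivity bookkeeping.
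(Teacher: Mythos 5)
Your argument is the intended one: the paper gives no proof of this corollary at all, and the evident justification is exactly your two-step iteration of Theorem \ref{thm:entropy convolution inequality}, using the Haar-measure and positivity properties of a good $k$-convolution to check that $x_1\ast x_2$ is again positive with $\|x_1\ast x_2\|_1=k^{-1}$, so the hypotheses of the theorem are met at the second stage. That part, including the observation that the resulting weights $(1-\lambda_3)\mu,(1-\lambda_3)(1-\mu),\lambda_3$ sweep out precisely the convex combinations, is correct (modulo the harmless relabeling $\theta\leftrightarrow 1-\theta$ in your first application).

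One caution about your fallback for arbitrary $\lambda_i\in[0,1]$: the claim that $H(x_i)\geq0$ because $\|x_i\|_1=k^{-1}\leq1$ forces the spectrum into $[0,1]$ is not valid here. The trace is unnormalized and $k^{-1}$ is typically larger than $1$; e.g.\ in the FN $\delta^{-1}$-algebra $(\mathscr{P}_{2,\pm},{\rm Tr},\ast,\rho)$ one has $\|x\|_1=\delta>1$, and $x=\delta e_1$ (a multiple of the Jones projection) satisfies $H(x)=-\delta\log\delta<0$. So entropies can be negative in this framework, and you cannot freely shrink the weights. Consequently the normalization $\sum_i\lambda_i=1$ is not just the ``natural reading'' but the only one under which your argument (and, as far as one can tell, the statement itself) goes through; with that reading your proof is complete, and the $n\geq4$ case follows by the same induction along any parenthesization.
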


We  characterize the extremizers of qECI on subfactor planar algebras.
\begin{theorem}\label{thm:encon on subfactor}
Suppose $\mathscr{P}_{\bullet}$ is an irreducible subfactor planar algebra with finite index $\delta^{2}$, $\delta>0$.
For any positive operators $x, y\in \mathscr{P}_{2, \pm}$ with $\|x\|_1=\|y\|_1=\delta$, $0\leq \theta\leq 1$, we have
\begin{align}\label{eq:qECI subfactor}
H(x*y) \geq (1-\theta) H(x)+ \theta H(y).
\end{align}
Moreover, the equality holds
 for some $0<\theta<1$ if and only if $x, y$ are multiples of right shifts of biprojections such that $\mathcal{R}(\mathcal{F}^{-1}(x)^*)=\mathcal{R}(\mathcal{F}^{-1}(y))$.
\end{theorem}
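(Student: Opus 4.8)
The inequality \eqref{eq:qECI subfactor} itself is immediate from Theorem \ref{thm:entropy convolution inequality}: by Example \ref{ex:vNF algebra from subfactor}, $(\mathscr{P}_{2,\pm},{\rm Tr},\ast,\rho)$ is a FN $\delta^{-1}$-algebra, and the hypothesis $\|x\|_1=\|y\|_1=\delta$ is exactly the normalization $\|x\|_1=\|y\|_1=k^{-1}$ with $k=\delta^{-1}$. For the equality case the plan is to make first a formal reduction, then a spectral translation, and finally match the resulting rigidity against the biprojection picture. Since the right-hand side of \eqref{eq:qECI subfactor} is affine in $\theta$ while the left-hand side is independent of $\theta$, and since the inequality also holds at $\theta=0$ and $\theta=1$ (giving $H(x\ast y)\ge H(x)$ and $H(x\ast y)\ge H(y)$), equality for some $\theta_0\in(0,1)$ is equivalent to the two equalities $H(x\ast y)=H(x)$ and $H(x\ast y)=H(y)$ holding simultaneously. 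Thus it suffices to characterize this joint equality.

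Next I would reinterpret it spectrally. For $y\ge 0$ with ${\rm Tr}(y)=\delta$, the map $T_y\colon\mathscr{P}_{2,\pm}\to\mathscr{P}_{2,\pm}$, $x\mapsto x\ast y$, is positive (Theorem \ref{thm:positivity of comultiplication on subfactor}, i.e.\ the quantum Schur product theorem), trace preserving (Haar measure property: ${\rm Tr}(x\ast y)=\delta^{-1}{\rm Tr}(x){\rm Tr}(y)={\rm Tr}(x)$), and unital ($I\ast y$ is a multiple of $I$ by a $2$-box computation, and the multiple equals $1$ since ${\rm Tr}(I\ast y)={\rm Tr}(I)$). Hence $x\ast y$ is majorized by $x$ in ${\rm Tr}$-spectral distribution, which re-proves $H(x\ast y)\ge H(x)$ by Schur concavity of $H$ and shows that $H(x\ast y)=H(x)$ forces $x\ast y$ and $x$ to have the same ${\rm Tr}$-spectral distribution; symmetrically for $y$. (Equivalently, $H(x\ast y)=H(x)$ says the slack in quantum Young's inequality \eqref{eq:quantum Young inequality} vanishes to first order in $r$ at $r=1$ when $(p,q)=(1,r)$, which is the link to the extremizer analysis of \cite{JLW19}.)

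For sufficiency, assume $x,y$ are multiples of right shifts of biprojections with $\mathcal{R}(\mathcal{F}^{-1}(x)^*)=\mathcal{R}(\mathcal{F}^{-1}(y))$. Using the calculus of biprojections on subfactor planar algebras (\cite{Bir94,BJ00,BJ03,JLW16,Liu}), one checks that under precisely this range-projection compatibility the convolution $x\ast y$ is again a scalar multiple of a right shift of a biprojection, whose underlying biprojection has the same ${\rm Tr}$-normalized rank as those of $x$ and $y$. Since a scalar multiple of a shift of a biprojection has the same spectral distribution as the suitably normalized biprojection, $x$, $y$ and $x\ast y$ all share one spectral distribution, so $H(x\ast y)=H(x)=H(y)$, and \eqref{eq:qECI subfactor} is an equality for every $\theta\in[0,1]$.

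The necessity direction is where I expect the real work to lie. Assume $H(x\ast y)=H(x)=H(y)$, so that by the second step $x\ast y=T_y(x)\prec x$ and $x\ast y=T_x(y)\prec y$ while $x\ast y$ has the spectral distribution of both $x$ and $y$. Equality in the majorization $T_y(x)\prec x$ together with preservation of the spectral distribution forces $T_y$ to act on the support of $x$ as conjugation by a partial isometry, i.e.\ $x\ast y=w\,x\,w^*$, and symmetrically $x\ast y=v\,y\,v^*$. The delicate remaining task is to feed this rigidity into the structure theory of extremal pairs of quantum Young's inequality on subfactor planar algebras from \cite{JLW19}: one must translate the diagrammatic identities $x\ast y=w\,x\,w^*$ and $x\ast y=v\,y\,v^*$ in $\mathscr{P}_{4,\pm}$ into the statement that $x$ and $y$ are multiples of right shifts of biprojections, and that these same identities force the Fourier range projections to coincide, $\mathcal{R}(\mathcal{F}^{-1}(x)^*)=\mathcal{R}(\mathcal{F}^{-1}(y))$. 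This extraction — passing from the equality case of convolution-majorization to the biprojection classification of \cite{JLW16,JLW19} — is the step I expect to be the main obstacle and the one requiring the planar-algebraic input.
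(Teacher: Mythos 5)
Your derivation of the inequality and your formal reduction of the equality case (equality at some interior $\theta_0$ forces $H(x\ast y)=H(x)=H(y)$) are both correct, and your observation that $T_y\colon x\mapsto x\ast y$ is a unital, positive, trace-preserving map --- so that $x\ast y\prec x$ and \eqref{eq:qECI subfactor} follows from Schur concavity --- is a legitimate alternative route to the inequality itself. The genuine gap is in the necessity direction, precisely at the step you yourself flag as ``the main obstacle.'' From equal entropy plus majorization you only obtain that $x\ast y$ has the same eigenvalue list as $x$ (and as $y$), i.e.\ $x\ast y=uxu^*$ for some unitary $u$; the stronger claim that $T_y$ must ``act on the support of $x$ as conjugation by a partial isometry'' does not follow from equality in a majorization for a single input, and even granting it there is no visible path from such a unitary-equivalence identity to the conclusion that $x$ and $y$ are multiples of right shifts of biprojections with $\mathcal{R}(\mathcal{F}^{-1}(x)^*)=\mathcal{R}(\mathcal{F}^{-1}(y))$.

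The paper's mechanism for closing this gap is different and essential. It encodes the putative equality into the analytic function
$g(z)={\rm Tr}\bigl[\bigl(x^{(1-\theta)z+\theta}\ast y^{\theta z+1-\theta}\bigr)(x\ast y)^{1-z}\bigr]$
on the strip $0\le\mathrm{Re}(z)\le1$, shows $|g(z)|\le\delta$ there by H\"older's inequality together with quantum Young's inequality, notes $g(1)=\delta$ and $g'(1)=H(x\ast y)-(1-\theta)H(x)-\theta H(y)=0$, and invokes the rigidity statement (Proposition 6.3 of \cite{JLW16}) to conclude $g\equiv\delta$. This upgrades the purely first-order information carried by the entropy equality (which, as you note, is the vanishing of the derivative of the Young slack at $r=1$) to an actual equality in quantum Young's inequality at interior exponents, and only then does Theorem 1.3 of \cite{JLW19} apply to identify $x,y$ as multiples of right shifts of biprojections with the stated range condition. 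Without some such upgrade your argument cannot close: the extremizer theory of \cite{JLW19} requires equality of norms, not an infinitesimal condition. (The paper's converse also runs through $g$: the biprojection hypothesis forces equality in \eqref{eq:encon2} by Theorem 1.3 of \cite{JLW19}, hence $g$ is constant, $g'(1)=0$, and the entropy equality follows; your spectral argument for sufficiency is plausible but is likewise left unproved.)
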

\begin{proof}
Applying Theorem \ref{thm:entropy convolution inequality} to the   FN  $\delta^{-1}$-algebra $(\mathscr{P}_{2,\pm},{\rm Tr},\ast,\rho)$, we could obtain the inequality.
We now assume that \[H(x*y)= (1-\theta) H(x)+ \theta  H(y)\] for some $0<\theta<1$.
Let 
\begin{align}\label{eq:encon1}
g(z)={\rm Tr}\left[\left(x^{(1-\theta) z+\theta}*y^{\theta z+1-\theta}\right)(x*y)^{1-z}\right], \quad 0\leq \text{Re}(z)\leq 1.
\end{align}
Then $g(1)=\delta$.
By H\"older's inequality and quantum Young's inequality, we have that 
\begin{equation}\label{eq:encon2}
\begin{aligned}
|g(z)|\leq & \big\|x^{(1-\theta) z+\theta}*y^{\theta z+1-\theta}\big\|_{\frac{1}{\text{Re}(z)}} \big\|(x*y)^{1-z}\big\|_{\frac{1}{1-\text{Re}(z)}}\\
\leq &\frac{1}{\delta} \big \|x^{(1-\theta) z+\theta}\big\|_{\frac{1}{(1-\theta)\text{Re}(z)+\theta}}\big\|y^{\theta z+1-\theta}\big\|_{\frac{1}{\theta \text{Re}(z)+1-\theta}}\delta^{1-\text{Re}(z)}\\
=& \delta.
\end{aligned}
\end{equation}
Differentiating $g(z)$ with respect to $z$, we obtain that
\begin{align*}
g'(z)=(1-\theta)&{\rm Tr}\big[\big(x^{(1-\theta) z+\theta}\log x *y^{\theta z+1-\theta}\big)\big(x*y\big)^{1-z}\big]\\
+\theta &{\rm Tr}\big[\big(x^{(1-\theta) z+\theta}*y^{\theta z+1-\theta}\log y\big)\big(x*y\big)^{1-z}\big]\\
-&{\rm Tr}\big[\big(x^{(1-\theta) z+\theta}*y^{\theta z+1-\theta}\big)\big(x*y\big)^{1-z}\log (x*y)\big],
\end{align*}
i.e.
\begin{align*}
g'(1)=-(1-\theta)H(x)-\theta H(y)+H(x*y)=0.
\end{align*}
By Proposition 6.3 in \cite{JLW16}, we have that $g(z)\equiv \delta$ for $0\leq \text{Re}(z)\leq 1$.
Hence
\begin{align*}
\left\|x^{(1-\theta) z+\theta}*y^{\theta z+1-\theta}\right\|_{\frac{1}{\text{Re}(z)}}
=\frac{1}{\delta} \left\|x^{(1-\theta) z+\theta}\right\|_{\frac{1}{(1-\theta)\text{Re}(z)+\theta}}  \left\|y^{\theta z+1-\theta}\right\|_{\frac{1}{\theta \text{Re}(z) +1-\theta}}.
\end{align*}
By Theorem 1.3 in \cite{JLW19} and $x,y\geq0$, we have that $x, y$ are multiples of right shifts of biprojections such that \[\mathcal{R}(\mathcal{F}^{-1}(x)^*)=\mathcal{R}(\mathcal{F}(y)).\]

Conversely, assume $x, y$ are multiples of right shifts of biprojections such that $\mathcal{R}(\mathcal{F}^{-1}(x)^*)=\mathcal{R}(\mathcal{F}^{-1}(y))$. Then $x\ast y$ is a multiple of a projection.
By Theorem 1.3 in \cite{JLW19}, we have that inequality (\ref{eq:encon2}) becomes equality. 
Hence $g(z)$ is constant and $g'(1)=0$ and $H(x*y)= (1-\theta) H(x)+ \theta  H(y)$.
\end{proof}

We have partial characterization for $x,y$ when $H(x\ast y)=H(x)$.
\begin{proposition} 
Suppose $\mathscr{P}_{\bullet}$ is an irreducible subfactor planar algebra with finite index $\delta^{2}$, $\delta>0$.
Let $x, y\in \mathscr{P}_{2, \pm}$, $x,y\geq0$, with $\|x\|_1=\|y\|_1=\delta$, $0\leq \theta\leq 1$. Suppose $x$ is a multiple of a projection, then
$H(x\ast y)=H(x)$ if and only if $y$ and $x\ast y$ are  
multiple of projections.
\begin{proof}
Let $y=\sum_{i=1}^n \alpha_i \displaystyle\frac{\delta P_i}{{\rm Tr}(P_i)}$ be the spectral decomposition of $y$, where $P_i$ are projections, $0\leq\alpha_i\leq1$. Then $\sum_{i=1}^n\alpha_i=1$. From the concavity of entropy, we have
\begin{align*}
H(x\ast y)&=H(x\ast \sum_{i=1}^n \alpha_i \frac{\delta P_i}{{\rm Tr(P_i)}})\\
&=H(\sum_{i=1}^n\alpha_i x\ast \frac{\delta P_i}{{\rm Tr(P_i)}})\geq \sum_{i=1}^n \alpha_i H(x\ast  \frac{\delta P_i}{{\rm Tr(P_i)}})\\
&\geq \sum_{i=1}^n\alpha_i H(x)\quad\text{Theorem \ref{thm:encon on subfactor}}\\
&=H(x).
\end{align*}
From $H(x\ast y)=H(x)$, we have $y=\displaystyle\frac{\delta P_{i_0}}{{\rm Tr}(P_{i_0})}$ for some $i_0$.
Let
\begin{align}\label{eq:encon3}
g(z)={\rm Tr}\left[\left(x^z*y\right)(x*y)^{1-z}\right], \quad 0\leq \text{Re}(z)\leq 1.
\end{align}
Then $g(1)=\delta$.
By H\"older's inequality and quantum Young's inequality, we have that 
\begin{equation}\label{eq:encon4}
\begin{aligned}
|g(z)|\leq & \big\|x^z*y\big\|_{\frac{1}{\text{Re}(z)}} \big\|(x*y)^{1-z}\big\|_{\frac{1}{1-\text{Re}(z)}}\\
\leq &\frac{1}{\delta} \big \|x^z\big\|_{\frac{1}{\text{Re}(z)}}\big\|y\big\|_{1}\delta^{1-\text{Re}(z)}\\
=& \delta.
\end{aligned}
\end{equation}
Differentiating $g(z)$ with respect to $z$, we obtain that
\begin{align*}
g'(1)=-H(x)+H(x\ast y)=0.
\end{align*}
By Proposition 6.3 in \cite{JLW16}, we have that $g(z)\equiv \delta$ for $0\leq \text{Re}(z)\leq 1$. Thus inequality \eqref{eq:encon4} becomes equality. We have
\begin{align}\label{eq:encon5}
\big\|x^z*y\big\|_{\frac{1}{\text{Re}(z)}} =\frac{1}{\delta} \big \|x^z\big\|_{\frac{1}{\text{Re}(z)}}\big\|y\big\|_{1}.
\end{align}
Note that $x$ is a multiple of some projection $P$.
 Thus
\begin{align*}
\|Q\ast P_{i_0}\|_{\frac{1}{\text{Re}(z)}}=\frac{1}{\delta}\|Q\|_{_{\frac{1}{\text{Re}(z)}}}\|P_{i_0}\|_1.
\end{align*}
By Theorem 1.5 in \cite{JLW19}, we have $Q\ast P_{i_0}$ is a multiple of projection. Therefore, $x\ast y$ is a multiple of projection.

Conversely, assume that $y$ and $x\ast y$ are multiple of projections. Then equality \eqref{eq:encon5} holds, which implies inequality \eqref{eq:encon4} becomes equality. Thus $g(z)\equiv \delta$ and $g(1)'=0$. We have that $H(x\ast y)=H(x)$.
\end{proof}
 \end{proposition}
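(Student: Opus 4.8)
The plan is to establish the forward direction via the same complex-analytic rigidity argument used in Theorem \ref{thm:encon on subfactor}, specialized to the case where one of the operators ($x$) is already a multiple of a projection, and then to check the (easier) converse directly. First I would write $y=\sum_{i=1}^n\alpha_i\,\delta P_i/{\rm Tr}(P_i)$ for its spectral decomposition with $\sum\alpha_i=1$ and $0\le\alpha_i\le1$, and use linearity of $\ast$ together with concavity of the von Neumann entropy $H$ and Theorem \ref{thm:encon on subfactor} (with $\theta$ chosen so that $(1-\theta)H(x)+\theta H(P_i\text{-term})$ collapses to $H(x)$, using $H(x)$ of a trace-one projection being $0$ after normalization — actually one uses that $x$ being a multiple of a projection forces the relevant term) to deduce the chain $H(x\ast y)\ge\sum_i\alpha_i H(x\ast \delta P_i/{\rm Tr}(P_i))\ge\sum_i\alpha_i H(x)=H(x)$. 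Equality in the first inequality (concavity) already forces $y$ to be supported on a single $P_{i_0}$, i.e. $y=\delta P_{i_0}/{\rm Tr}(P_{i_0})$ is a multiple of a projection.

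Next I would run the standard three-line/Hadamard argument on the auxiliary function $g(z)={\rm Tr}[(x^z\ast y)(x\ast y)^{1-z}]$ on the strip $0\le\mathrm{Re}(z)\le1$. Using the Haar-measure normalization $\|x\ast y\|_1=\delta^{-1}\|x\|_1\|y\|_1$ one gets $g(1)=\delta$; applying H\"older's inequality (Proposition \ref{prop:Holder_inequality}) and the quantum Young's inequality (Theorem \ref{thm:quantum Young inequality}) applied to the FN $\delta^{-1}$-algebra $(\mathscr{P}_{2,\pm},{\rm Tr},\ast,\rho)$ gives $|g(z)|\le\delta$ on the strip. Differentiating term by term at $z=1$ and using $\|x\|_1=\|y\|_1=\delta$ gives $g'(1)=H(x\ast y)-H(x)=0$. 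By the boundary rigidity statement (Proposition 6.3 in \cite{JLW16}), $g(z)\equiv\delta$, hence the intermediate H\"older and Young inequalities are equalities for all $z$; in particular $\|x^z\ast y\|_{1/\mathrm{Re}(z)}=\delta^{-1}\|x^z\|_{1/\mathrm{Re}(z)}\|y\|_1$. Since $x$ is a multiple of a projection $P$ (so $x^z$ is a multiple of $P$ too) and $y$ is a multiple of the projection $P_{i_0}$, this is exactly the equality case of the quantum Young's inequality for $P\ast P_{i_0}$, and Theorem 1.5 in \cite{JLW19} forces $P\ast P_{i_0}$ — hence $x\ast y$ — to be a multiple of a projection.

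For the converse, assuming $y$ and $x\ast y$ are multiples of projections (with $x$ a multiple of a projection by hypothesis), the equality case of Theorem 1.5 in \cite{JLW19} makes $\|x^z\ast y\|_{1/\mathrm{Re}(z)}=\delta^{-1}\|x^z\|_{1/\mathrm{Re}(z)}\|y\|_1$ hold, so the bound $|g(z)|\le\delta$ is attained identically, $g$ is constant on the strip, $g'(1)=0$, and unwinding the derivative gives $H(x\ast y)=H(x)$.

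The main obstacle I anticipate is the bookkeeping in the equality-analysis step: one must verify that the equality conditions in both H\"older's inequality and the quantum Young's inequality are \emph{simultaneously} forced by $g\equiv\delta$ (rather than just one of them), and then correctly invoke the precise form of the extremizer characterization in \cite{JLW19} — the relevant theorem there (Theorem 1.5) is about when a convolution of projection-multiples is again a projection-multiple, and matching hypotheses (positivity, the range-projection condition, the shift structure) requires care. The concavity step at the beginning, by contrast, is routine, and the converse is essentially a rereading of the forward argument.
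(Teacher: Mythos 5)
Your proposal follows essentially the same route as the paper's proof: the spectral decomposition of $y$ with concavity of $H$ and the subfactor qECI to force $y$ to be a multiple of a projection, then the auxiliary function $g(z)={\rm Tr}[(x^z\ast y)(x\ast y)^{1-z}]$ with H\"older, quantum Young, Proposition 6.3 of \cite{JLW16} and the extremizer characterization of \cite{JLW19} to conclude that $x\ast y$ is a multiple of a projection, with the converse obtained by reversing the equality chain. The argument is correct and matches the paper's in all essential steps.
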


Next, we will see a class of good convolutions such that the quantum entropic convolution inequality holds while the quantum Young's inequality does not hold.
\begin{example}\label{ex:vnwca from matrix}
Let $\mathcal{M}=M_n(\mathbb{C})$, $\tau=${\rm Tr}, the unnormalized trace of matrices. Let $U$ be a unitary matrix in $M_n(\mathbb{C})\otimes M_n(\mathbb{C})$, define 
\[\Delta:M_n(\mathbb{C})\rightarrow M_n(\mathbb{C})\otimes M_n(\mathbb{C}):\quad x\mapsto U^* (x\otimes I) U. \]
It is clear that $\Delta$ is a unital positive linear map, thus is a good co-multiplication. By computation, we have that the reduced good convolution $\ast$ is given by
\[M_n(\mathbb{C})\otimes M_n(\mathbb{C})\rightarrow M_n(\mathbb{C}):\quad x\otimes y\mapsto x\ast y={\rm Tr}_2(U x\otimes y U^*), \]
where ${\rm Tr}_2$ is the partial trace at the second position.
In particular, let \[U_\theta=\sqrt{\theta}I+i\sqrt{\theta(1-\theta)}S,\quad 0\leq \theta\leq 1,\]
where $S(A\otimes B)=B\otimes A$ is the swap operator. Let $\Delta_\theta$ be the corresponding co-multiplication. Then the reduced good convolution $\ast_\theta$ is given by
\[x\ast_\theta y={\rm Tr}_2(U_\theta x\otimes y U_\theta^*)=\theta x+(1-\theta)y+i\sqrt{\theta(1-\theta)}[x,y]\]
for any  density matrices $x,y$.
The quantum entropic convolution inequality for this convolution was proved in \cite{ADO16}:
\begin{align*}
H(x\ast_\theta y)\geq \theta H(x)+(1-\theta)H(y).
\end{align*}
However, quantum Young's inequality does not hold for this good convolution.
\end{example}

\subsection{Quantum smooth entropic convolution  inequality}\label{subsec:convolution smooth entropy}
In this section, we will prove the H\"older continuity of von Neumann entropy and convolution entropy. 
Using the continuity of entropy, we establish the quantum smooth entropic convolution inequality.
For a finite dimensional FN $k$-algebra $(\mathcal{M},\tau,\ast,\rho)$,
we call $\tau(I)$ the \textbf{Frobenius-Perron dimension} of $\mathcal{M}$, denoted by $d$. Recall that $\lambda=\min\{\tau(e):\ \text{$e$ is a 
projection in $\mathcal{M}$}\}$. We have
\begin{align}\label{eq:infty norm bound by p norm}
\|x\|\leq \lambda^{-1/p}\|x\|_p,\quad \forall x\in\mathcal{M}.
\end{align}
\begin{theorem}[Quantum smooth entropic convolution inequality]\label{thm: convolution smooth entropy inequality}
Let $(\mathcal{M},\tau,\ast,\rho)$ be a finite dimensional  FN $k$-algebra. 
Let $p,q\in[1,\infty]$, 
 $\epsilon,\eta,\in[0,1]$, $\epsilon+\eta\leq\displaystyle\frac{1}{(d+1)(1+k(d+1))}$.
For any positive operators $x,y\in\mathcal{M}$ with $\|x\|_1=\|y\|_1=k^{-1}$, $0\leq \theta\leq1$, we have 
 \begin{align}\label{eq:quantum smooth convolution entropy inequality}
H_{\epsilon,\eta}^{p,q}(x\ast y)
 \geq&\theta H_{\epsilon}^{p}(x)+(1-\theta)H_{\eta}^{q}(y)-O_{d,\lambda,k}(|\epsilon\log\epsilon|)-O_{d,\lambda,k}(|\eta\log\eta|).
 \end{align}
 \end{theorem}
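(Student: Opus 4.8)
The plan is to reduce the smooth inequality to the (non-smooth) quantum entropic convolution inequality, Theorem~\ref{thm:entropy convolution inequality}, by a perturbation argument whose engine is a Fannes-type continuity estimate for the von Neumann entropy on the norm-bounded positive cone of the finite-dimensional algebra $\mathcal{M}$. Since $H_{\epsilon,\eta}^{p,q}(x\ast y)$ is an infimum, it suffices to fix an arbitrary admissible pair $z,w\in\mathcal{M}$, $z,w\ge0$, with $\|x-z\|_p\le\epsilon$ and $\|y-w\|_q\le\eta$, and to bound $H(z\ast w)$ from below by the right-hand side of \eqref{eq:quantum smooth convolution entropy inequality}. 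Because $\mathcal{M}$ is finite-dimensional and $H$ is continuous on the compact set $\{u\ge0:\|u-x\|_p\le\epsilon\}$ (which is bounded in operator norm by \eqref{eq:infty norm bound by p norm}), the supremum defining $H_{\epsilon}^{p}(x)$ is attained at some $u^{\ast}\ge0$ with $\|u^{\ast}-x\|_p\le\epsilon$ and $H(u^{\ast})=H_{\epsilon}^{p}(x)$; pick $v^{\ast}\ge0$ similarly with $\|v^{\ast}-y\|_q\le\eta$ and $H(v^{\ast})=H_{\eta}^{q}(y)$.

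First I would record uniform bounds. For a positive operator $a$ in $\mathcal{M}$ one has $\|a\|\le\lambda^{-1}\tau(a)$ (the top eigenprojection has trace $\ge\lambda$), while $|\tau(a)-\tau(b)|\le\|a-b\|_1\le d^{1-1/p}\|a-b\|_p$ by H\"older's inequality; since $\tau(x)=\tau(y)=k^{-1}$, the smallness hypothesis $\epsilon+\eta\le\frac{1}{(d+1)(1+k(d+1))}$ keeps $\tau(z),\tau(w),\tau(u^{\ast}),\tau(v^{\ast})$ bounded away from $0$ and bounded above, hence keeps $\|z\|,\|w\|,\|u^{\ast}\|,\|v^{\ast}\|$, and via the primary Young's inequality $\|a\ast b\|_1\le k\|a\|_1\|b\|_1$ also $\|z\ast w\|,\|u^{\ast}\ast v^{\ast}\|$, all bounded by a constant $M_{d,\lambda,k}$. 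The same inputs give $\|u^{\ast}-z\|_1\le 2d^{1-1/p}\epsilon$, $\|v^{\ast}-w\|_1\le 2d^{1-1/q}\eta$, whence, once more by the primary Young's inequality, $\|u^{\ast}\ast v^{\ast}-z\ast w\|_1\le O_{d,\lambda,k}(\epsilon+\eta)$. The analytic core is then the continuity estimate: if $a,b\ge0$ in $\mathcal{M}$ satisfy $\|a\|,\|b\|\le M_{d,\lambda,k}$ and $\|a-b\|_1\le\delta$ with $\delta$ small, then $|H(a)-H(b)|\le O_{d,\lambda,k}(|\delta\log\delta|)$; I would prove this by diagonalizing $a$ and $b$, comparing their finite eigenvalue lists (whose $\ell^1$-difference is controlled by $\|a-b\|_1$), and invoking that $t\mapsto-t\log t$ has concave modulus of continuity $\lesssim-\delta\log\delta$ on $[0,M_{d,\lambda,k}]$. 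Applied to $a=z\ast w$, $b=u^{\ast}\ast v^{\ast}$, together with $-(\epsilon+\eta)\log(\epsilon+\eta)\le-\epsilon\log\epsilon-\eta\log\eta$, this yields $H(z\ast w)\ge H(u^{\ast}\ast v^{\ast})-O_{d,\lambda,k}(|\epsilon\log\epsilon|)-O_{d,\lambda,k}(|\eta\log\eta|)$.

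It remains to bound $H(u^{\ast}\ast v^{\ast})$. Normalizing, set $\widetilde{u}=\frac{k^{-1}}{\tau(u^{\ast})}u^{\ast}$ and $\widetilde{v}=\frac{k^{-1}}{\tau(v^{\ast})}v^{\ast}$, which are positive with $\|\widetilde{u}\|_1=\|\widetilde{v}\|_1=k^{-1}$, so Theorem~\ref{thm:entropy convolution inequality} applied in $(\mathcal{M},\tau,\ast,\rho)$ gives $H(\widetilde{u}\ast\widetilde{v})\ge\theta H(\widetilde{u})+(1-\theta)H(\widetilde{v})$. Using the scaling identity $H(ca)=cH(a)-c\tau(a)\log c$, the estimate $|\tau(u^{\ast})-k^{-1}|\le d^{1-1/p}\epsilon$ (so the scalar $c=k^{-1}/\tau(u^{\ast})$ obeys $|c-1|\le O_{d,\lambda,k}(\epsilon)$ and $|\log c|\le O_{d,\lambda,k}(\epsilon)$ under the smallness hypothesis), and the uniform bound $|H(\cdot)|\le O_{d,\lambda,k}(1)$ on our operators, I get $|H(\widetilde{u})-H(u^{\ast})|\le O_{d,\lambda,k}(\epsilon)$, $|H(\widetilde{v})-H(v^{\ast})|\le O_{d,\lambda,k}(\eta)$, and, since $\widetilde{u}\ast\widetilde{v}$ is a scalar multiple of $u^{\ast}\ast v^{\ast}$ with scalar near $1$, also $|H(\widetilde{u}\ast\widetilde{v})-H(u^{\ast}\ast v^{\ast})|\le O_{d,\lambda,k}(\epsilon+\eta)$. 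Chaining the three comparisons, recalling $H(u^{\ast})=H_{\epsilon}^{p}(x)$ and $H(v^{\ast})=H_{\eta}^{q}(y)$, and absorbing $O(\epsilon),O(\eta)$ into $O(|\epsilon\log\epsilon|),O(|\eta\log\eta|)$, gives $H(z\ast w)\ge\theta H_{\epsilon}^{p}(x)+(1-\theta)H_{\eta}^{q}(y)-O_{d,\lambda,k}(|\epsilon\log\epsilon|)-O_{d,\lambda,k}(|\eta\log\eta|)$; taking the infimum over admissible $z,w$ yields \eqref{eq:quantum smooth convolution entropy inequality}. The main obstacle is the continuity estimate for $H$ with the sharp $|\delta\log\delta|$ modulus, and the related structural point that one cannot perturb directly to $x,y$: one must route through the extremizers $u^{\ast},v^{\ast}$ of the smooth entropies (and their $L^1$-normalizations) so that the inequality $H(u^{\ast})\le H_{\epsilon}^{p}(x)$ is used in the favorable direction.
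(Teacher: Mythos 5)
Your proposal is correct, and its engine is the same as the paper's: reduce to the non-smooth inequality of Theorem \ref{thm:entropy convolution inequality} via a Fannes-type continuity estimate for $H$ with modulus $O(|\delta\log\delta|)$ on the norm-bounded positive cone (the paper's Lemma \ref{lem:tlogt} and Proposition \ref{prop:continuity_von_entropy}), combined with the primary Young's inequality to control $\|z\ast w-x\ast y\|_1$ (Proposition \ref{prop:continuity convolution entropy}). The one organizational difference is your routing: you pass through the attained extremizers $u^{\ast},v^{\ast}$ of the smooth entropies and their $L^1$-normalizations, and you assert that this detour is structurally necessary because ``one cannot perturb directly to $x,y$.'' That assertion is mistaken, and the paper's proof is the counterexample: since $x,y$ already satisfy $\|x\|_1=\|y\|_1=k^{-1}$, one applies the qECI directly to $x,y$, then uses Proposition \ref{prop:continuity_von_entropy} in the form $H_{\epsilon}^{p}(x)\leq H(x)+d^{1-1/p}|\epsilon\log\epsilon|+O_{d,\lambda,k}(\epsilon)$ (every admissible competitor has entropy close to $H(x)$, so the supremum is controlled without exhibiting its maximizer) and Proposition \ref{prop:continuity convolution entropy} in the form $H_{\epsilon,\eta}^{p,q}(x\ast y)\geq H(x\ast y)-O_{d,\lambda,k}(|\epsilon\log\epsilon|)-O_{d,\lambda,k}(|\eta\log\eta|)$, and chains the three displays. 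This avoids entirely the compactness argument for attainment of the suprema, the renormalization $\widetilde{u}=\frac{k^{-1}}{\tau(u^{\ast})}u^{\ast}$, and the scaling identity $H(ca)=cH(a)-c\tau(a)\log c$; your extra steps are all sound under the stated smallness hypothesis (which does keep $\tau(u^{\ast})$ bounded away from $0$), so they cost only length, not correctness.
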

Before proving  Theorem \ref{thm: convolution smooth entropy inequality}, we  need some preparations.
\subsubsection{Smooth convolution entropy} The convolution entropy of two positive operator $x,y\in\mathcal{M}$ is defined as follows.
\begin{definition}\label{def:convolution entropy}
Let $(\mathcal{M},\tau)$ be a tracial   von Neumann algebra with a good $k$-convolution $\ast$. 
For any two positive operators $x,y\in\mathcal{M}$, 
the \textbf{convolution entropy} is defined by
\begin{align}\label{eq:convolution entropy}
H(x\ast y)=\tau(-x\ast y\log x\ast y). 
\end{align}
\end{definition}

Let  $(\mathscr{P}_{2,\pm},{\rm Tr},\ast,\rho)$ be the  FN $\delta^{-1}$-algebra from subfactor planar algebras. We have that the convolution entropy $H(e_1\ast x)=H(x\ast e_1)=H(x)$, the von Neumann entropy of $x$, where $e_1$ is the Jones projection.

Let $(\mathcal{M},\tau,\ast,\rho)$ be a FN $k$-algebra. From the quantum entropic convolution inequality  (See Theorem \ref{thm:entropy convolution inequality}), we know that the convolution entropy of two positive operators with traces $k^{-1}$ is larger than or equal to the entropy of either of them. 

To study the continuity of convolution entropy, we shall introduce the notion of smooth  convolution  entropy.

\begin{definition}\label{def:convolution_smooth_entropy}
Let $(\mathcal{M},\tau)$ be a tracial  
von Neumann  algebra with a good $k$-convolution $\ast$.
For any positive operators $x,y\in\mathcal{M}$, $\epsilon,\eta\in [0,1]$ and $p,q\in[1,\infty]$,
the \textbf{smooth convolution  entropy}  is defined by
\begin{align}\label{eq:definition smooth convolution entropy}
H_{\epsilon,\eta}^{p,q}(x\ast y):=\inf\{H(z\ast w):\  z,w\in\mathcal{M},\,z,w\geq0,\ \|x-z\|_p\leq\epsilon,\ \|y-w\|_q\leq\eta\}. 
\end{align}
\end{definition}
The smooth entropy is defined as follows.
\begin{definition}\label{def:sup smooth_von_entropy}
Let $(\mathcal{M},\tau)$ be a tracial von Neumann algebra. For any positive operator $x\in\mathcal{M}$, $p\in[1,\infty]$, $\epsilon\in[0,1]$, the \textbf{$(p,\epsilon)$ smooth entropy} of $x$ is defined by 
\begin{align*}
H_{\epsilon}^p(x):=\sup\{H(y):\ y\in\mathcal{M},\ y\geq0,\ \|y-x\|_p\leq \epsilon \}.
\end{align*}
\end{definition}
\begin{remark}
We refer to another smooth entropy studied  in quantum smooth uncertainty principles in \cite{HLW21} and
smooth Renyi entropy studied by R. Renner and S. Wolf in quantum information in \cite{RS04}.
\end{remark}
\subsubsection{Continuity of convolution entropy}\label{subsec:Continuity of convolution entropy}
The following lemma would be useful to prove the H\"older continuity of von Neumann entropy.
 \begin{lemma}\label{lem:tlogt}
 For any $0\leq s\leq t\leq r$, $t-s\leq r/2$,  we have 
\[|t\log t-s\log s|\leq -(t-s)\log (t-s)+2|\log r|(t-s).\]
\begin{proof}
We first assume that $r=1$. Let $\gamma=t-s$, $f(x)=(x+\gamma)\log(x+\gamma)-x\log(x).$ Since $f'(x)=\log(x+\gamma)-\log(x)\geq0$, we have
\begin{align*}
|t\log t-s\log s|=|f(s)|\leq\max\{|f(0)|,|f(1-\gamma)|\}=-(t-s)\log(t-s).
\end{align*}
Applying the above process to $s/r$ and $t/r$, we could obtain the conclusion.
\end{proof}
\end{lemma}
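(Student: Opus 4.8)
The statement to prove is Lemma~\ref{lem:tlogt}: for $0\le s\le t\le r$ with $t-s\le r/2$,
\[
|t\log t-s\log s|\le -(t-s)\log(t-s)+2|\log r|(t-s).
\]
My plan is to reduce to the case $r=1$ by a scaling argument, and to handle the normalized case by studying the monotonicity of the auxiliary function $f(x)=(x+\gamma)\log(x+\gamma)-x\log x$ where $\gamma=t-s\ge 0$ is treated as a fixed parameter. First I would observe that $f'(x)=\log(x+\gamma)-\log x\ge 0$ for $x>0$ (and $f$ extends continuously to $x=0$ with $f(0)=\gamma\log\gamma$), so $f$ is nondecreasing on $[0,\infty)$. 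Hence on the interval $x\in[0,1-\gamma]$ — which is exactly the range of $s$ when $t\le 1$ — we have $f(0)\le f(s)\le f(1-\gamma)$, so $|f(s)|\le\max\{|f(0)|,|f(1-\gamma)|\}$. Then I would compute $f(0)=\gamma\log\gamma$ and $f(1-\gamma)=1\cdot\log 1-(1-\gamma)\log(1-\gamma)=-(1-\gamma)\log(1-\gamma)$, and check that the latter is at most $-\gamma\log\gamma=|f(0)|$ for $\gamma\in[0,1/2]$ (an elementary one-variable estimate: $-(1-\gamma)\log(1-\gamma)\le\gamma\le-\gamma\log\gamma$ when $\gamma\le 1/2$, using $\log(1/\gamma)\ge\log 2\ge 1$... actually one just needs $-(1-\gamma)\log(1-\gamma)\le -\gamma\log\gamma$, which follows since $h(u)=-u\log u$ is concave, symmetric-ish, and increasing on $[0,1/e]$ — I will just verify it directly). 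This gives $|t\log t-s\log s|=|f(s)|\le-\gamma\log\gamma=-(t-s)\log(t-s)$, which is the claim when $r=1$ (the $2|\log r|(t-s)$ term vanishes).

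For general $r$, I would apply the $r=1$ conclusion to $s'=s/r$ and $t'=t/r$, which satisfy $0\le s'\le t'\le 1$ and $t'-s'=(t-s)/r\le 1/2$. This yields
\[
\left|\frac{t}{r}\log\frac{t}{r}-\frac{s}{r}\log\frac{s}{r}\right|\le-\frac{t-s}{r}\log\frac{t-s}{r}.
\]
Multiplying by $r$ and expanding the logarithms via $\log(a/r)=\log a-\log r$, the left side becomes
\[
\bigl|t\log t-s\log s-(t-s)\log r\bigr|,
\]
so by the triangle inequality
\[
|t\log t-s\log s|\le r\left|\frac{t}{r}\log\frac{t}{r}-\frac{s}{r}\log\frac{s}{r}\right|+(t-s)|\log r|\le -(t-s)\log\frac{t-s}{r}+(t-s)|\log r|.
\]
Finally $-(t-s)\log\frac{t-s}{r}=-(t-s)\log(t-s)+(t-s)\log r\le-(t-s)\log(t-s)+(t-s)|\log r|$, and combining gives the stated bound with the constant $2$ on the $|\log r|(t-s)$ term.

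\textbf{Main obstacle.} The only genuine content is the $r=1$ case, and within that the one subtle point is the comparison $|f(1-\gamma)|\le|f(0)|$, i.e.\ that the maximum of $|f|$ over $[0,1-\gamma]$ is attained at the left endpoint rather than the right; this needs the hypothesis $\gamma\le r/2$ (here $\gamma\le 1/2$) and a short explicit inequality between $-(1-\gamma)\log(1-\gamma)$ and $-\gamma\log\gamma$. Everything else — the sign of $f'$, the scaling, the triangle-inequality bookkeeping — is routine. I would present the $r=1$ argument in full and then indicate the scaling step briefly, as in the commented-out computation already in the source.
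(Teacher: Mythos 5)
Your proof is correct and follows essentially the same route as the paper: the monotone auxiliary function $f(x)=(x+\gamma)\log(x+\gamma)-x\log x$, the endpoint comparison of $|f|$ on $[0,1-\gamma]$, and then rescaling by $r$ via the triangle inequality (which is exactly the paper's own scaling step). One small caution: your parenthetical chain $-(1-\gamma)\log(1-\gamma)\le\gamma\le-\gamma\log\gamma$ breaks down for $\gamma\in(1/e,1/2]$ since $-\log\gamma<1$ there, but the direct comparison $-(1-\gamma)\log(1-\gamma)\le-\gamma\log\gamma$ that you fall back on does hold on all of $[0,1/2]$ (e.g.\ because $g(\gamma)=-\gamma\log\gamma+(1-\gamma)\log(1-\gamma)$ vanishes at $0$ and $1/2$ and is increasing then decreasing on that interval), so the argument stands and in fact supplies a detail the paper leaves implicit.
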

The von Neumann entropy is H\"older continuous.
\begin{proposition}\label{prop:continuity_von_entropy}
Suppose $(\mathcal{M},\tau)$ is a finite dimensional  von Neumann algebra. 
Let $p\in[1,\infty]$, $\epsilon\in[0,1]$, $h>0$.
Let $x,y\in\mathcal{M}$, $x,y\geq0$, $\|x\|_1\leq h$, $\|y\|_1\leq h$. If $\|x-y\|_p\leq \epsilon$, then
\begin{align}\label{eq:continuity of entropy}
|H(x)-H(y)|\leq d^{1-1/p}|\epsilon\log\epsilon|+O_{d,\lambda,h}(\epsilon).
\end{align} 
\begin{proof}
Since every finite dimensional von Neumann algebra is a direct sum of matrix algebras, we may assume that
 \[\mathcal{M}\quad=\quad\sum_{i=1}^m  \oplus\mathop{M_{n_i}(\mathbb{C})}\limits_{\delta_i},\quad n_i\in\mathbb{N}^{*},\quad\delta_i>0.\]
Let ${\rm Tr}_i$ be the unnormalized trace on $M_{n_i}(\mathbb{C})$, we have 
\[\tau=\sum_{i=1}^m\delta_i {\rm Tr}_i,\quad d=\sum_{i=1}^m\delta_i n_i.\]
Let
\[ x=\sum_{i=1}^m x_i,\quad y=\sum_{i=1}^m y_i,\quad x_i,y_i\in M_{n_i}(\mathbb{C}).\]
 Let $\lambda_j(x_i)$ and $\lambda_j(y_i)$ be the $j$-th largest eigenvalues of $x_i$ and $y_i$ respectively. Take 
 \[r=2\lambda^{-1}h\geq2\lambda^{-1}\max\{\|x\|_1,\|y\|_1\}\geq2\max\{\|x\|,\|y\|\}.\] 
The second inequality is due to inequality \eqref{eq:infty norm bound by p norm}. 
  Thus
 \[|\lambda_j(x_i)-\lambda_j(y_i)|\leq r/2.\]
 By Lemma 3.26 in \cite{HLW21}, we have 
  \begin{align}\label{eq:majority 1}
    \|\lambda(x)-\lambda(y)\|_1=\sum_{i=1}^m\delta_i\sum_{j=1}^{n_i}|\lambda_j(x_i)-\lambda_j(y_i)|\leq \|x-y\|_1.
  \end{align}
 Let $f(t)=-t\log t$.  We have 
 \begin{align*}
 |H(x)-H(y)|&=\bigg|\sum_{i=1}^m\delta_i {\rm Tr}_i(x_i\log x_i-y_i\log y_i) \bigg|\\
 &\leq \sum_{i=1}^m\delta_i \sum_{j=1}^{n_i}
\bigg|f(\lambda_j(x_i))-f(\lambda_j(y_i))\bigg| 
\\
& \leq\sum_{i=1}^m\delta_i \sum_{j=1}^{n_i}\bigg(
f\big(| \lambda_j(x_i)-\lambda_j(y_i) |\big)+2|\log r|| \lambda_j(x_i)-\lambda_j(y_i)| \bigg)\quad \text{Lemma \ref{lem:tlogt}}\\
& \leq f(\|\lambda(x)-\lambda(y)\|_1)+
 (\log d+2|\log r|)\|\lambda(x)-\lambda(y)\|_1\quad\text{Jensen's inequality}.
 \end{align*} 
  By H\"older's inequality, we have
  \begin{align}
   \|\lambda(x)-\lambda(y)\|_1\leq\|x-y\|_1\leq d^{1-1/p}\|x-y\|_p\leq d^{1-1/p}\epsilon.
  \end{align}
Thus,
\begin{align*}
f(\|\lambda(x)-\lambda(y)\|_1)&\leq |d^{1-1/p}\epsilon\log d^{1-1/p}\epsilon|+d^{1-1/p}\epsilon\\
&\leq d^{1-1/p}|\epsilon\log\epsilon|+d^{1-1/p}\epsilon(1+(1-1/p)|\log d|).
\end{align*} 
Therefore
\begin{align*}
|H(x)-H(y)|\leq d^{1-1/p}|\epsilon\log\epsilon|+d^{1-1/p}\epsilon(1+(1-1/p)|\log d|+|\log d|+2|\log r|).
\end{align*}
Note that $d^{1-1/p}\leq d+1$, we could obtain the inequality \eqref{eq:continuity of entropy}.
\end{proof}
\end{proposition}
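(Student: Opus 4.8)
The plan is to reduce the entropy difference to a scalar estimate on eigenvalues and then aggregate the per-eigenvalue bounds using concavity. Since $\mathcal{M}$ is finite dimensional, I would first write it as a direct sum of matrix algebras $\mathcal{M}=\bigoplus_{i=1}^m M_{n_i}(\mathbb{C})$ with weights $\delta_i$, so that $\tau=\sum_i \delta_i\,{\rm Tr}_i$ and $d=\sum_i \delta_i n_i$. Writing $x=\sum_i x_i$, $y=\sum_i y_i$ and letting $\lambda_j(x_i),\lambda_j(y_i)$ denote the decreasingly ordered eigenvalues, the entropy difference becomes $|H(x)-H(y)|=\big|\sum_{i,j}\delta_i\big(f(\lambda_j(x_i))-f(\lambda_j(y_i))\big)\big|$ with $f(t)=-t\log t$, so the problem is now entirely about comparing two sorted lists of eigenvalues.

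The first technical step is to fix a scale $r=2\lambda^{-1}h$. Using $\|x\|\le\lambda^{-1}\|x\|_1\le\lambda^{-1}h$ (the case $p=1$ of \eqref{eq:infty norm bound by p norm}) and likewise for $y$, all eigenvalues lie in $[0,r/2]$, so every difference $|\lambda_j(x_i)-\lambda_j(y_i)|$ is at most $r/2$. This puts us exactly in the hypotheses of Lemma \ref{lem:tlogt}, which I would apply to each ordered pair to obtain the per-eigenvalue bound
\[
\big|f(\lambda_j(x_i))-f(\lambda_j(y_i))\big|\le f\big(|\lambda_j(x_i)-\lambda_j(y_i)|\big)+2|\log r|\,|\lambda_j(x_i)-\lambda_j(y_i)|.
\]
Lemma \ref{lem:tlogt} is the crucial input: $f$ is not Lipschitz at the origin, and the lemma quantifies its modulus of continuity as $f$ of the increment plus a genuinely Lipschitz remainder, which is precisely why the final bound must carry an $|\epsilon\log\epsilon|$ term rather than $O(\epsilon)$.

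Next I would aggregate. Summing the displayed bound against the weights $\delta_i$ and exploiting concavity of $f$ through Jensen's inequality with the normalized weights $\delta_i/d$ (which sum to $1$ since $\sum_i\delta_i n_i=d$) converts $\sum_{i,j}\delta_i f(a_{ij})$ into $d\,f(S/d)=f(S)+S\log d$, where $S=\|\lambda(x)-\lambda(y)\|_1=\sum_{i,j}\delta_i a_{ij}$ and $a_{ij}=|\lambda_j(x_i)-\lambda_j(y_i)|$. This is the decisive maneuver: concavity forces the singular $t\log t$ contributions to collapse into a single $f(S)$ rather than accumulating $d$ separate $|\epsilon\log\epsilon|$ terms. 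I would then control $S$ by the operator distance via the Lidskii-type majorization inequality $\|\lambda(x)-\lambda(y)\|_1\le\|x-y\|_1$ (Lemma 3.26 in \cite{HLW21}), followed by H\"older's inequality, giving $S\le\|x-y\|_1\le d^{1-1/p}\|x-y\|_p\le d^{1-1/p}\epsilon$.

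Finally I would expand. Using the elementary estimate $\sup_{0\le t\le a}f(t)\le|a\log a|+a$ with $a=d^{1-1/p}\epsilon$, splitting $\log(d^{1-1/p}\epsilon)=\log\epsilon+(1-1/p)\log d$, and collecting the Lipschitz remainder $(\log d+2|\log r|)S$, every term except $d^{1-1/p}|\epsilon\log\epsilon|$ is seen to be $O_{d,\lambda,h}(\epsilon)$ (the $r$-dependence is where $\lambda$ and $h$ enter). The main obstacle throughout is not any single estimate but the need to combine the majorization inequality for eigenvalues with the concavity aggregation so that the non-Lipschitz singularity of entropy is captured by the sharp coefficient $d^{1-1/p}$; a naive termwise estimate would summon a factor of $d$ in front of $|\epsilon\log\epsilon|$.
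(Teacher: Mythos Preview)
Your proposal is correct and follows the paper's proof essentially step for step: the same direct-sum decomposition, the same choice $r=2\lambda^{-1}h$ to place all eigenvalues in $[0,r/2]$, the same per-eigenvalue application of Lemma~\ref{lem:tlogt}, the same Jensen aggregation yielding $f(S)+S\log d$, the same majorization input $\|\lambda(x)-\lambda(y)\|_1\le\|x-y\|_1$ from \cite{HLW21}, and the same H\"older/expansion endgame. Your explicit computation $d\,f(S/d)=f(S)+S\log d$ is exactly the content the paper records as ``Jensen's inequality'' in its chain of inequalities.
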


From Proposition \ref{prop:continuity_von_entropy} and primary Young's inequality for good $k$-convolutions, we obtain the H\"older continuity of convolution entropy.
\begin{proposition}\label{prop:continuity convolution entropy}
Suppose $(\mathcal{M},\tau)$ is a finite dimensional von Neumann algebra with a good $k$-convolution $\ast$.
Let $p,q\in[1,\infty]$, $\epsilon, \eta\in[0,1]$, $h>0$,
$\epsilon+\eta\leq\displaystyle\frac{1}{kh(d+1)}$.  
Let $x,y,z,w\in\mathcal{M}$, $x,y,z,w\geq0$, $\|x\|_1,\|y\|_1,\|z\|_1,\|w\|_1\leq h$.
If $\|x-z\|_p\leq\epsilon$, $\|y-w\|_q\leq\eta$, then
\begin{align}\label{eq:continuity of convolution entropy}
|H(x\ast y)-H(z\ast w)|\leq
O_{d,\lambda,h,k}(|\epsilon\log\epsilon|)+O_{d,\lambda,h,k}(|\eta\log\eta|).
\end{align}
\begin{proof}
By primary Young's inequality and H\"older's inequality, one could obtain
\begin{align*}
\|x*y-z*w\|_1&\leq  \|(x-z)\ast y\|_1+\|z\ast(y-w)\|_1\\&\leq kh\|x-z\|_1 + kh\|y-w\|_1\\
&\leq kh(d^{1-1/p}\epsilon+d^{1-1/q}\eta)\\
&\leq kd(d+1)(\epsilon+\eta)\\
&\leq1.
\end{align*}
Note that $\|x\ast y\|_1,\|z\ast w\|_1\leq kh^2$.
Applying Proposition \ref{prop:continuity_von_entropy} to $x\ast y$ and $z\ast w$, we have
\begin{align}\label{eq:continuity of convolution entropy 2}
|H(x\ast y)-H(z\ast w)|&\leq kd(d+1)(\epsilon+\eta)|\log kd(d+1)(\epsilon+\eta)|+O_{d,\lambda,h,k}(\epsilon+\eta).
\end{align}
It is a technical computation from inequality \eqref{eq:continuity of convolution entropy 2} to inequality \eqref{eq:continuity of convolution entropy}.
\end{proof}
\end{proposition}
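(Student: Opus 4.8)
The plan is to reduce the claim to the already-established H\"older continuity of the von Neumann entropy, Proposition \ref{prop:continuity_von_entropy}, applied to the single pair $x\ast y$ and $z\ast w$. The entire content is therefore to show that an $L^p$-perturbation of $x$ by $\epsilon$ together with an $L^q$-perturbation of $y$ by $\eta$ produces an $L^1$-perturbation of the convolution of controlled size, and that both convolutions have uniformly bounded trace, so that Proposition \ref{prop:continuity_von_entropy} applies with a constant depending only on the fixed data.

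First I would telescope the difference of convolutions through the bilinear identity
\[ x\ast y-z\ast w=(x-z)\ast y+z\ast(y-w),\]
and estimate each summand by the primary Young's inequality for the good $k$-convolution (item (2) of Remark \ref{rem:good comultiplication induce good convolution}) together with the mass bounds $\|y\|_1,\|z\|_1\leq h$, giving
\[ \|x\ast y-z\ast w\|_1\leq kh\big(\|x-z\|_1+\|y-w\|_1\big).\]
Converting the $L^p$- and $L^q$-control into $L^1$-control by H\"older's inequality, $\|a\|_1\leq d^{1-1/p}\|a\|_p$ with $d=\tau(I)$, and using $d^{1-1/p}\leq d+1$ as in Proposition \ref{prop:continuity_von_entropy}, yields
\[ \|x\ast y-z\ast w\|_1\leq kh(d+1)(\epsilon+\eta).\]
The standing hypothesis $\epsilon+\eta\leq\tfrac{1}{kh(d+1)}$ then forces this quantity to be at most $1$, placing the two operators in the small-perturbation regime required by the entropy-continuity estimate.

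Next I would record that the primary Young's inequality also bounds the traces of the convolutions, $\|x\ast y\|_1,\|z\ast w\|_1\leq k\|x\|_1\|y\|_1\leq kh^2$, so both are positive with $1$-norm at most the fixed constant $h':=kh^2$. I would then apply Proposition \ref{prop:continuity_von_entropy} to the pair $(x\ast y,\,z\ast w)$ with exponent $p=1$, mass bound $h'$, and perturbation size $\epsilon':=kh(d+1)(\epsilon+\eta)$. Since the exponent is $1$, the leading factor $d^{1-1/p}$ collapses to $1$, and the estimate becomes
\[ |H(x\ast y)-H(z\ast w)|\leq|\epsilon'\log\epsilon'|+O_{d,\lambda,h,k}(\epsilon'),\]
which is precisely inequality \eqref{eq:continuity of convolution entropy 2}.

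The remaining, and only genuinely fiddly, step is to pass from the single term $|\epsilon'\log\epsilon'|$ in \eqref{eq:continuity of convolution entropy 2} to the separated form in \eqref{eq:continuity of convolution entropy}. Writing $\epsilon'=c(\epsilon+\eta)$ with $c=kh(d+1)$ and expanding $|\epsilon'\log\epsilon'|=c(\epsilon+\eta)\,|\log c+\log(\epsilon+\eta)|$, the point is the elementary subadditivity of $t\mapsto-t\log t$ on $[0,1]$, namely $-(\epsilon+\eta)\log(\epsilon+\eta)\leq-\epsilon\log\epsilon-\eta\log\eta$ whenever $\epsilon+\eta\leq1$; this splits the logarithmic part cleanly into an $\epsilon$-piece and an $\eta$-piece, while the $\log c$ factor and the leftover multiple of $c$ contribute only terms linear in $\epsilon+\eta$, which are of lower order and absorbed into the $O_{d,\lambda,h,k}$ symbols. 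I expect this bookkeeping, rather than any conceptual difficulty, to be the main obstacle: one must ensure every implicit constant depends only on $d,\lambda,h,k$ (and not on $p,q,\epsilon,\eta$), and that the $|\epsilon\log\epsilon|$ and $|\eta\log\eta|$ contributions remain decoupled.
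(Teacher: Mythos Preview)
Your proposal is correct and follows essentially the same route as the paper: telescope the convolution, apply the primary Young's inequality and H\"older to bound $\|x\ast y-z\ast w\|_1$, note the uniform trace bound $kh^2$, and invoke Proposition~\ref{prop:continuity_von_entropy}. You even spell out the final ``technical computation'' via subadditivity of $-t\log t$, which the paper leaves implicit.
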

Now we are ready to prove the quantum smooth entropic convolution inequality.

 \begin{proof}[Proof of Theorem \ref{thm: convolution smooth entropy inequality}]
 Take $h=k^{-1}+d+1$. Then $\|x-z\|_p\leq\epsilon$ implies
 \[\|z\|_1\leq\|x\|_1+\|x-z\|_1\leq k^{-1}+d^{1-1/p}\|x-z\|_p\leq h.\]
 Similarly, $\|y-w\|_q\leq\eta$ implies $\|w\|_1\leq h$.
From Proposition \ref{prop:continuity_von_entropy},
we have
\begin{align}\label{eq:upper bound of the smooth entropy1}
H_\epsilon^p(x)=\sup\{H(z):\ z\in\mathcal{M},\ \|x-z\|_p\leq\epsilon\}\leq H(x)+d^{1-1/p}|\epsilon\log\epsilon|+O_{d,\lambda,k}(\epsilon),
\end{align}
and
\begin{align}\label{eq:upper bound of the smooth entropy2}
H_\eta^q(y)=\sup\{H(w):\ w\in\mathcal{M},\ \|y-w\|_q\leq\eta\}\leq H(y)+d^{1-1/q}|\eta\log\eta|+O_{d,\lambda,k}(\eta).
\end{align}
Note that $\epsilon+\eta\leq 1/kh(d+1)$.
 From Proposition \ref{prop:continuity convolution entropy}, we have
 \begin{align}\label{eq:lower bound smooth convolution entropy}
H_{\epsilon,\eta}^{p,q}(x\ast y)\geq H(x\ast y)-
O_{d,\lambda,k}(|\epsilon\log\epsilon|)-O_{d,\lambda,k}(|\eta\log\eta|).
\end{align}
Combining inequalities \eqref{eq:upper bound of the smooth entropy1}, \eqref{eq:upper bound of the smooth entropy2}, \eqref{eq:lower bound smooth convolution entropy} with Theorem \ref{thm:entropy convolution inequality}, we obtain the inequality \eqref{eq:quantum smooth convolution entropy inequality}
 \end{proof}

 \newpage
 \bibliographystyle{plain}

\end{document}